\documentclass[a4paper,12pt]{amsart}
\usepackage{amsmath, amssymb, amsthm, xcolor,mathtools}
\usepackage{verbatim}
\usepackage{graphicx}
\usepackage{enumerate}
\usepackage[colorlinks=true, citecolor=blue, linkcolor=blue, bookmarks=true]{hyperref}

\newtheorem{theorem}{Theorem}
\newtheorem{lemma}[theorem]{Lemma}
\newtheorem{corollary}[theorem]{Corollary}
\newtheorem{proposition}[theorem]{Proposition}

\newtheorem*{theorem*}{Theorem}

\theoremstyle{definition}
\newtheorem{definition}[theorem]{Definition}
\newtheorem{remark}[theorem]{Remark}
\newtheorem{example}[theorem]{Example}
\newtheorem{notation}[theorem]{Notation}
\newtheorem*{question*}{Question}

\newcommand{\QT}{\mathrm{QT}_c}
\newcommand{\N}{\mathbb{N}}
\newcommand{\M}{\mathcal{M}}
\newcommand{\RR}{\mathcal{R}}
\newcommand{\R}{\mathbb{R}}
\newcommand{\C}{\mathbb{C}}
\newcommand{\K}{\mathbb{K}}

\newcommand{\Z}{\mathcal{Z}}

\newcommand{\tr}{\mathrm{tr}}
\newcommand{\eps}{\epsilon}

\newcommand{\completion}[2]{\overline{#1}{}^{#2}}

\setlength{\jot}{7pt}
\allowdisplaybreaks
\numberwithin{equation}{section}

\title[Traces on the uniform tracial completion]{Traces on the uniform tracial completion of {$\mathcal{Z}$}-stable {C$^*$}-algebras}

\author[]{Samuel Evington}
\address{Mathematical Institute, University of M\"unster, Einsteinstrasse 62, 48149 M\"unster, Germany}
\email{evington@uni-muenster.de}
\subjclass[2020]{46L05, 46L35}

\thanks{Research partially supported by: Deutsche Forschungsgemeinschaft (DFG, German Research Foundation) – Project-ID 427320536 – SFB 1442; Germany's Excellence Strategy EXC 2044 390685587  Mathematics M{\"u}nster: Dynamics–Geometry–Structure;  ERC Advanced Grant 834267 - AMAREC}

\begin{document}

\begin{abstract}
    The uniform tracial completion of a C$^*$-algebra $A$ with compact trace space $T(A) \neq \emptyset$ is obtained by completing the unit ball with respect to the uniform 2-seminorm $\|a\|_{2,T(A)}=\sup_{\tau \in T(A)} \tau(a^*a)^{1/2}$.  The \emph{trace problem} asks whether every trace on the uniform tracial completion is the $\|\cdot\|_{2,T(A)}$-continuous extension of a trace on $A$.
    We answer this question positively in the case of C$^*$-algebras that tensorially absorb the Jiang--Su algebra, such as those studied in the Elliott classification programme.
\end{abstract}

\maketitle

\section{Introduction}
\renewcommand{\thetheorem}{\Alph{theorem}}

A trace on a unital operator algebra $A$ is a positive linear functional $\tau:A \rightarrow \C$ such that $\tau(1_A) = 1$ and $\tau(ab)=\tau(ba)$ for all $a,b \in A$.
The significance of traces for the structure and classification of operator algebras was already apparent in the foundational papers of Murray and von Neumann (\cite{MvN36,MvN37,MvN43}).
In essence, the fact that traces don't ``see'' the non-commutativity gives rise to numerical invariants from non-commutative structures.

The space $T(A)$ of all traces on a C$^*$-algebra $A$ is itself an invariant. 
Indeed, a major programme of recent research in C$^*$-algebras has been the Elliott classification programme (\cite{El76, El95}), which seeks to classify the simple separable amenable C$^*$-algebras via K-theory and traces under suitable regularity properties (see the survey articles \cite{WhiteICM, WinterICM, ET08} for an overview and \cite{Ki95, Phillips00,GLN20a, GLN20b, EGLN15, TWW17, CGSTW} for the state of the art). 

The motivating example of a trace is the trace on a matrix algebra (suitably normalised). 
In this case, the trace is unique. 
However, in general, the trace space $T(A)$ of a C$^*$-algebra $A$ can be empty, a singleton, a finite-dimensional simplex, or an infinite-dimensional simplex (in the sense of Choquet theory \cite{Choquet56, Alf71}). The last case can occur even for simple  approximately finite-dimensional C$^*$-algebras (\cite{Bl80, Go77}).

Traces on a C$^*$-algebra $A$ give rise to representations via the Gelfand--Naimark--Segal construction (\cite{GNS1,GNS2}). Indeed, given  $\tau \in T(A)$, one obtains a Hilbert space $H_\tau$, by completing $A$ with respect to the seminorm $\|a\|_{2,\tau} = \tau(a^*a)^{1/2}$, and a representation $\pi_\tau$ of $A$ on $H_\tau$ via left multiplication. Therefore, one obtains an enveloping von Neumann algebra $\pi_\tau(A)''$ for each trace $\tau \in T(A)$. 

When $A$ is amenable, the von Neumann algebra $\pi_\tau(A)''$ is hyperfinite by Connes' theorem (\cite{Co76}). A breakthrough of Matui and Sato established lifting techniques for deducing structural results about a simple nuclear C$^*$-algebra $A$ from properties of the von Neumann algebras $\pi_\tau(A)''$ when $T(A)$ was non-empty and finite-dimensional (\cite{MS12}). These ideas were subsequently extended to cover certain infinite-dimensional trace spaces (\cite{TWW15, KR14, Sa12}) and, when $A$ also has stable rank one and locally finite nuclear dimension, to all trace spaces (\cite{Thiel20}). 

The main difficulty when $T(A)$ is infinite-dimensional is that one now needs to work with the \emph{uniform 2-norm} $\|a\|_{2,T(A)} = \sup_{\tau \in T(A)}\|a\|_{2,\tau}$ and the infinitely many extreme points of $T(A)$ will no longer be a topologically discrete set.
Ozawa identified the \emph{uniform tracial completion} $\completion{A}{T(A)}$ of a C$^*$-algebra $A$ with $T(A) \neq \emptyset$ as the key object of study  (\cite{Oz13}). This C$^*$-algebra is obtained by completing the $\|\cdot\|$-closed unit ball of $A$ with respect to the norm $\|\cdot\|_{2,T(A)}$ (see Section \ref{subsec:completion}). 
Loosely speaking, $\completion{A}{T(A)}$ can be viewed as the section algebra of a bundle over $T(A)$ with the von Neumann algebras $\pi_\tau(A)''$ as fibres (\cite{Oz13, Ev16, Ev18}).
Structural properties of the uniform tracial completion underpin recent progress on the Elliott classification programme (\cite{CETW-classification, CGSTW}) and the Toms--Winter conjecture in particular (\cite{TW07, BBSTWW, CETWW, CE}). Moreover, they are the motivating example of tracially complete C$^*$-algebras (\cite{TraciallyComplete, TracialTransfer}).

This paper concerns a foundational issue in the theory of uniform tracial completions known as the trace problem (see \cite[Section 1.2]{TraciallyComplete}):

\begin{question*}[Trace Problem]
Is every trace on $\completion{A}{T(A)}$ the $\|\cdot\|_{2,T(A)}$-continuous extension of a trace on $A$?     
\end{question*}

At a more conceptual level, the trace problem is about whether the uniform tracial completion process is actually idempotent and whether the $\|\cdot\|_{2,T(A)}$-norm structure on $\completion{A}{T(A)}$ can be recovered from the C$^*$-algebraic structure of $\completion{A}{T(A)}$.
Some useful insight into the trace problem can be garnered from the monotracial case. When $T(A)=\{\tau\}$, it can be shown that $\completion{A}{T(A)}$ is isomorphic to the von Neumann algebra $\pi_\tau(A)''$ coming from the GNS construction. Therefore, the trace problem has a positive solution in this special case because $\pi_\tau(A)''$ is a finite factor and finite factors have a unique trace (\cite{MvN43}). 

The main result of this paper is a solution to the trace problem for arbitrary trace spaces in the setting relevant for the Elliott classification programme.  
\begin{theorem}\label{thm:main}
    Let $A$ be a C$^*$-algebra with $T(A)$ compact and non-empty. Suppose $A$ absorbs the Jiang--Su algebra $\Z$ tensorially, i.e.\ $A \otimes \Z \cong A$. Then the trace problem has a positive solution. 
\end{theorem}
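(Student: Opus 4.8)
The plan is to prove that the canonical affine embedding $T(A)\hookrightarrow T(\completion{A}{T(A)})$ is surjective. Here a trace $\sigma\in T(A)$ is sent to its unique $\norm{\cdot}_{2,T(A)}$-continuous extension $\bar\sigma$, which exists since $\norm{\cdot}_{2,\sigma}\le\norm{\cdot}_{2,T(A)}$; surjectivity of this map says precisely that every trace on $\M:=\completion{A}{T(A)}$ equals $\bar\sigma$ for some $\sigma\in T(A)$, which is the trace problem. I will use three structural features of $\M$, each reflecting that $\M$ is a ``uniform bundle'' over $T(A)$ of the finite von Neumann algebras $\pi_\tau(A)''$ -- algebras which, because $A\cong A\otimes\Z$ and $\Z$ has a unique trace, absorb the hyperfinite $\mathrm{II}_1$ factor $R$ and are in particular non-atomic:
\begin{enumerate}
\item[(i)] $\M$ has strict comparison of positive elements with respect to $T(A)$;
\item[(ii)] $\M$ is rich in projections: every continuous affine function $f\colon T(A)\to[0,1]$ equals $\tau\mapsto\tau(p)$ for some projection $p\in\M$, and finitely many such functions whose sum is $\le 1$ can be realised simultaneously by pairwise orthogonal projections;
\item[(iii)] $\M$ has real rank zero, so every self-adjoint element of $\M$ is a norm-limit of real-linear combinations of projections.
\end{enumerate}

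Granting these, fix a trace $\phi$ on $\M$. Whenever $p,q\in\M$ are projections with $\tau(p)\le\tau(q)$ for every $\tau\in T(A)$, property (i) gives $p\precsim q$ and hence $\phi(p)\le\phi(q)$; in particular $\phi(p)$ depends only on the affine function $\hat p:=(\tau\mapsto\tau(p))\in\mathrm{Aff}(T(A),[0,1])$, and the assignment $\hat p\mapsto\phi(p)$ is monotone. Using the orthogonal realisation in (ii), this assignment is also additive on functions whose sum is $\le 1$, and, by dividing $1$ into $n$ equivalent pairwise orthogonal projections (possible by (ii)), it fixes the constant functions; a routine argument then extends it to a state on the order unit space $\mathrm{Aff}(T(A))$. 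Since the state space of $\mathrm{Aff}(K)$ is canonically affinely homeomorphic to $K$ for every compact convex set $K$, this state is evaluation at a unique $\sigma\in T(A)$, so $\phi(p)=\sigma(p)=\bar\sigma(p)$ for all projections $p\in\M$. As $\phi$ and $\bar\sigma$ are both norm-continuous and agree on projections, property (iii) forces $\phi=\bar\sigma$ on all of $\M$. Thus $\phi$ is the continuous extension of the trace $\sigma$, completing the proof.

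The hard part -- and what I expect to be the technical core of the paper -- is establishing (ii) and (iii). The subtlety is that $\M$ must contain many projections that need not exist in $A$ at all: a $\Z$-stable $A$ can be projectionless, as $\Z$ itself is, so these projections genuinely arise in the completion. Producing them amounts to lifting approximately central matrix units out of $A\cong A\otimes\Z$ and cutting self-adjoint elements by spectral projections, all while keeping every estimate uniform over $T(A)$ -- which may be a non-metrisable and non-Bauer Choquet simplex -- so this is a quantitative strengthening of the corresponding von Neumann algebraic facts about the fibres, and it is precisely here that $\Z$-stability is indispensable. Once the projections are available, the identification of $\phi$ above is comparatively soft, using only strict comparison and the standard duality between $\mathrm{Aff}(T(A))$ and $T(A)$. (In the monotracial case $\M$ is the hyperfinite $\mathrm{II}_1$ factor and (i)--(iii) are classical, and the argument specialises to the uniqueness of the trace on a finite factor, recovering the known positive answer there.)
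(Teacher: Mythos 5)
Your overall architecture --- strict comparison with respect to $T(A)$, a supply of projections realising continuous affine functions, and Choquet duality to identify an arbitrary trace $\phi$ on $\M=\completion{A}{T(A)}$ with evaluation at some $\sigma\in T(A)$ --- matches the paper's strategy up to the final step, and your items (i) and (ii) are exactly what is proved (Theorems~\ref{thm:main-strict-comparison} and~\ref{thm:projections}); you correctly identify them as the technical core. The genuine gap is item (iii). Real rank zero of $\M$ --- indeed even the weaker statement that projections have $\|\cdot\|$-dense linear span --- is not established here and is explicitly flagged in the paper as unknown (``we don't know that projections have a $\|\cdot\|$-dense linear span''). What is actually proved is only a Cuntz-semigroup surrogate, Theorem~\ref{thm:Cuntz-dense}: for every positive contraction $a$ and every $\eps>0$ there is a projection $p$ with $(a-\eps)_+\precsim p\precsim a$. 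Consequently your closing step ``$\phi$ and $\bar\sigma$ agree on projections, hence everywhere'' does not go through: agreement on projections does not determine a trace without a density statement you cannot currently supply, and your argument only pins down $\phi$ on the norm-closed linear span of the projections.

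The repair --- and the route the paper takes --- is to avoid reducing the identification of $\phi$ to projections altogether. One argues by contradiction: if $\phi\in T(\M)\setminus X$, where $X$ is the image of $T(A)$, then since $X$ is a closed face of the Choquet simplex $T(\M)$ there is $f\in\mathrm{Aff}(T(\M))_+$ with $f|_X=0$ and $f(\phi)>0$; Cuntz--Pedersen realises $f+\eps$ as $\tau\mapsto\tau(a)$ for some $a\in\M_+$, so $\tau(a)\le\tau(\eps 1_\M)$ for all $\tau\in X$, and the Ng--Robert separation lemma (Lemma~\ref{lem:K-to-all}), which requires only strict comparison with respect to $X$, transfers this inequality to \emph{all} (quasi)traces, giving $f(\phi)\le 0$, a contradiction. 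Thus projections are needed only to establish strict comparison (via Theorem~\ref{thm:Cuntz-dense} together with Murray--von Neumann comparison of projections), not to separate traces. If you wish to retain your more constructive identification of $\sigma$, you must replace (iii) by such a separation argument; as written, the conclusion rests on an unproved --- and, in this generality, open --- hypothesis.
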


The Jiang--Su algebra $\Z$ (\cite{JS99}), which appears in Theorem \ref{thm:main}, plays a fundamental role in the classification of simple nuclear C$^*$-algebras since both $A$ and $A \otimes \Z$ have the same K-theory and trace space. Tensorial absorption of $\Z$, also known as \emph{$\Z$-stability}, has emerged as the key regularity property in the Elliott classification programme (\cite{ET08}). In particular, all simple approximately finite-dimensional C$^*$-algebras are $\Z$-stable. 

The condition that $T(A)$ is compact is automatic when $A$ is unital. Theorem \ref{thm:main} is formulated so that it also covers a wide class of non-unital examples. 
In the non-unital case, the trace space $T(A)$ is defined as the space of positive tracial functionals $\tau \in A^*$ with $\|\tau\|_{A^*} = 1$ and it may or may not be compact. However, every simple separable exact $\Z$-stable C$^*$-algebra is stably isomorphic to one with compact trace space (see for example \cite[Theorem 2.7]{CE}).

The trace problem was previously known to have a positive solution for finite-dimensional trace simplices, where $\completion{A}{T(A)}$ is a finite direct sum of finite factors and so von Neumann algebraic methods suffice. Beyond this point, $\completion{A}{T(A)}$ is no longer a von Neumann algebra, so a solution to the trace problem has to take the topology of $T(A)$ and the bundle-like structure of $\completion{A}{T(A)}$ into account. Indeed, the trace problem was open even for approximately finite-dimensional algebras with a compact (but infinite) set of extreme traces. 

The solution to the trace problem provides further evidence for the central role that uniform tracial completions can play in the stably finite part of the Elliott classification programme and future equivariant or non-simple extensions thereof. 
At the conceptual level, it opens the door to analysis of the C$^*$-structure of $\completion{A}{T(A)}$ generalising von Neumann algebraic results. 
At the technical level, it simplifies arguments by circumventing the need to restrict to a space of $\|\cdot\|_{2,T(A)}$-continuous traces.

Beyond regularity, none of the other assumptions typically seen in the Elliott classification programme are needed to establish Theorem~\ref{thm:main}. In particular, it holds for non-simple C$^*$-algebras and non-nuclear C$^*$-algebras. Therefore, its future applications are not limited to the setting of the Elliott classification programme.
Moreover, $\Z$-stability only enters in order to ensure the existence of complemented partitions of unity (\cite{CETWW, CE}), which we will discuss below, and to rule out type I quotients. In particular, uniform property $\Gamma$ would be a suitable alternative regularity property (\cite{CETW}). 

For maximum generality, we work in the framework of tracially complete C$^*$-algebras (see Section \ref{subsec:TC-algebras} for the relevant definitions) and establish the following theorem:
\begin{theorem}\label{thm:main-TC}
    Let $(\M, X)$ be a type II$_1$ factorial tracially complete C$^*$-algebra with complemented partitions of unity. Then $T(\M) = X$.
\end{theorem}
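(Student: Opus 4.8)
The plan is to run the classical argument that a $\mathrm{II}_1$ factor has a unique trace --- the trace is forced on projections by comparison theory, and then extends to the whole algebra by norm-density of the span of the projections --- in the present bundle-type setting, with the comparison theory of $\M$ (a consequence of the complemented partitions of unity together with the type $\mathrm{II}_1$ hypothesis) playing the role of Murray--von Neumann comparison, and the continuous affine functions $\mathrm{Aff}(X)$ on $X$ playing the role of the single reference trace. Since $X\subseteq T(\M)$ trivially, it suffices to prove the reverse inclusion, and we do this by showing that every $\tau\in T(\M)$ is the barycenter of a Borel probability measure on $X$; such a barycenter lies in $X$ because $X$ is weak-$*$ compact and convex.

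Fix $\tau\in T(\M)$ and, for a projection $p\in\M$, write $\hat p\in\mathrm{Aff}(X)$ for the function $\sigma\mapsto\sigma(p)$. The first and main step is to show that $\tau(p)$ depends only on $\hat p$, and that the resulting map $\Lambda$ sending $\hat p$ to $\tau(p)$ is monotone and additive on orthogonal projections. The inputs here are the structural properties of $\M$: \emph{strict comparison of projections} (if $\hat p<\hat q$ on $X$ then $p$ is Murray--von Neumann subequivalent to $q$ in $\M$, whence $\tau(p)\le\tau(q)$), \emph{divisibility} (every projection of $\M$ splits into $n$ pairwise orthogonal, mutually equivalent subprojections, so $\M$ has projections of each constant trace in $[0,1]\cap\mathbb{Z}[\tfrac{1}{2}]$), and, patched together from these, the \emph{density} of $\{\hat p : p \text{ a projection of }\M\}$ in $\mathrm{Aff}(X)_{[0,1]}$. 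To pass from strict to non-strict comparison, and to realise a prescribed sum $\hat p+\hat q\le 1$ by a pair of \emph{orthogonal} projections, one absorbs small divisible error projections; the one genuinely global manoeuvre --- comparing or complementing projections whose trace functions touch $0$, $1$, or one another on pieces of $X$ --- is exactly what the complemented partitions of unity deliver. Well-definedness of $\Lambda$ follows similarly: if $\hat p=\hat q$ then $p\precsim q\oplus r$ and $q\precsim p\oplus r$ for an error projection $r$ of arbitrarily small constant trace, forcing $\tau(p)=\tau(q)$.

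Being monotone, additive and unital on a $\|\cdot\|_\infty$-dense subset of $\mathrm{Aff}(X)_{[0,1]}$, $\Lambda$ is norm-contractive and extends to a positive unital $\mathbb{R}$-linear functional on $\mathrm{Aff}(X)$; by Hahn--Banach and the Riesz representation theorem it is integration against a Borel probability measure $\mu$ on $X$, so that $\tau(p)=\int_X\sigma(p)\,d\mu(\sigma)=\nu(p)$ for every projection $p\in\M$, where $\nu\in X$ is the barycenter of $\mu$. To finish, recall that $\M$ has real rank zero: each fibre $\pi_\sigma(\M)''$ is a von Neumann algebra, hence has real rank zero, and the complemented partitions of unity patch local spectral approximations into global ones despite the absence of non-trivial central projections (it is here, as elsewhere, that factoriality enters). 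Therefore the real-linear span of the projections of $\M$ is norm-dense in $\M_{\mathrm{sa}}$, and since $\tau$ and $\nu$ are both norm-continuous and agree on projections we conclude $\tau=\nu\in X$; as $\tau$ was arbitrary, $T(\M)=X$.

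The principal obstacle will be the comparison step: plain strict comparison does not apply to projections whose normalised trace functions meet $0$, $1$ or each other on subsets of $X$, and it is precisely for these situations that complemented partitions of unity are indispensable, reducing the global comparisons and complementations to local ones over relatively open pieces of $X$ which are then glued. A subsidiary task is to have the relevant facts in place for $\M$ itself (rather than an ultrapower) --- strict comparison of projections, $n$-divisibility, realisation of affine functions by projections, and real rank zero --- but these belong to, or lie close to, the established structure theory of tracially complete C$^*$-algebras with complemented partitions of unity. When $X$ is a single point the whole argument degenerates to the statement that the $\mathrm{II}_1$ factor $\M$ has a unique trace, consistent with the monotracial case discussed above.
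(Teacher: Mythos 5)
The fatal step is the last one: you assert that $\M$ has real rank zero (equivalently here, that the projections of $\M$ have $\|\cdot\|$-dense linear span), patched together from the fibres via complemented partitions of unity, and you use this to pass from ``$\tau$ and $\nu$ agree on projections'' to ``$\tau=\nu$''. This is not known, and the paper explicitly flags it: the introduction states that one must work with Cuntz comparison of general positive elements precisely ``because we don't know that projections have a $\|\cdot\|$-dense linear span''. The proposed CPoU patching cannot deliver real rank zero, because a CPoU gluing argument controls errors only in the uniform $2$-norm $\|\cdot\|_{2,X}$, whereas real rank zero is a statement about approximation in the C$^*$-norm; the whole technical novelty of the paper is that hereditary subalgebras give just enough $\|\cdot\|$-control to produce honest projections with $(a-\eps)_+\precsim p\precsim a$ (``real rank zero at the level of the Cuntz semigroup'', Theorem~\ref{thm:Cuntz-dense}), which is strictly weaker than real rank zero. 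Since $\tau\in T(\M)$ is an arbitrary, possibly non-$\|\cdot\|_{2,X}$-continuous trace, norm-continuity plus agreement on projections is all you have, and without density of the span of projections the argument does not close.

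The first part of your argument is essentially sound in spirit and close to the paper's: comparison and existence of projections with prescribed trace functions on $X$ (the existence statement is Theorem~\ref{thm:projections} of the paper, itself a substantial CPoU construction, and uniqueness/comparison is \cite[Theorem~7.17]{TraciallyComplete}), plus Choquet theory, do pin down $\tau$ on projections as integration against a measure on $X$. The end-game, however, must be rerouted. The paper's route is to upgrade Theorem~\ref{thm:Cuntz-dense} to strict comparison of positive elements with respect to $X$ (Theorem~\ref{thm:main-strict-comparison}) and then invoke the Ng--Robert transfer lemma (Lemma~\ref{lem:K-to-all}) together with the relative exposedness of the face $X$: one realises an affine function separating $\sigma\in T(\M)\setminus X$ from $X$ by a positive element and derives a contradiction. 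Alternatively, one can repair your ending without real rank zero by using the sandwich $(a-\eps)_+\precsim p_\eps\precsim a$ to deduce $d_\tau(a)=d_\nu(a)$ for all $a\in\M_+$ (since Cuntz subequivalence forces rank inequalities for \emph{all} traces) and then the integral formula $\tau(a)=\int_0^\infty d_\tau((a-t)_+)\,dt$; but as written, the real rank zero claim is an unjustified and likely false shortcut.
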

Thus, the trace problem as stated in \cite[Question 1.1]{TraciallyComplete} has a positive solution in the presence of complemented partitions of unity (CPoU) in the type II$_1$ setting. Theorem~\ref{thm:main} is a special case of Theorem~\ref{thm:main-TC}, but we can also use Theorem~\ref{thm:main-TC} to compute the trace simplex of W$^*$-bundles with property $\Gamma$. An example application is the following:

\begin{corollary}\label{cor:main-bundle}
    Let $K$ be a compact Hausdorff space and $\RR$ denote the hyperfinite II$_1$ factor. Let $\M = C_\sigma(K, \RR)$ be the trivial W$^*$-bundle over $K$ with fibre $\RR$, i.e.\  $\M$ is the C$^*$-algebra of all $\|\cdot\|$-bounded and $\|\cdot\|_{2,\mathrm{tr}_\RR}$-continuous functions $f:K \rightarrow \RR$.
    Then every trace $\tau \in T(\M)$ has the form 
    \begin{equation}\label{eqn:main-bundle1}
        \tau(f) = \int_K \mathrm{tr}_\RR(f(x)) \, d\mu(x), \quad f \in \M,
    \end{equation}
    for some Radon probability measure $\mu \in \mathrm{Prob}(K)$.
\end{corollary}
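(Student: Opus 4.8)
The plan is to exhibit $(\M, X)$, for a suitable distinguished trace space $X$, as a type~II$_1$ factorial tracially complete C$^*$-algebra with complemented partitions of unity, and then to read the conclusion off Theorem~\ref{thm:main-TC}. Let $E\colon \M \to C(K)$ be the canonical conditional expectation of the W$^*$-bundle, given fibrewise by $E(f)(x) = \mathrm{tr}_\RR(f(x))$, and for $\mu \in \mathrm{Prob}(K)$ put $\tau_\mu = \mu \circ E$, so that $\tau_\mu(f) = \int_K \mathrm{tr}_\RR(f(x))\, d\mu(x)$. Set $X = \{\tau_\mu : \mu \in \mathrm{Prob}(K)\}$. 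Since $\mu \mapsto \tau_\mu$ is affine and weak$^*$-to-weak$^*$ continuous, $X$ is a compact convex subset of $T(\M)$, and a direct computation gives
\begin{equation*}
\|f\|_{2,X}^2 = \sup_{\mu \in \mathrm{Prob}(K)} \tau_\mu(f^*f) = \sup_{x \in K} \mathrm{tr}_\RR(f(x)^*f(x)) = \sup_{x \in K} \|f(x)\|_{2,\mathrm{tr}_\RR}^2 .
\end{equation*}
Thus $\|\cdot\|_{2,X}$ is exactly the uniform $2$-norm defining $C_\sigma(K,\RR)$; in particular it is a norm on $\M$ and the unit ball of $\M$ is complete for it, so $(\M, X)$ is a tracially complete C$^*$-algebra.

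Next I would pin down its structure. As $\partial_e \mathrm{Prob}(K) = \{\delta_x : x \in K\}$ and $\mu \mapsto \tau_\mu$ is injective (restrict to the unital copy of $C(K) \subseteq \M$, on which $\tau_\mu(g\cdot 1) = \int_K g\, d\mu$), the extreme points of $X$ are precisely the evaluations $\tau_{\delta_x}\colon f \mapsto \mathrm{tr}_\RR(f(x))$. For fixed $x$, the GNS construction for $\tau_{\delta_x}$ has kernel $\{f \in \M : f(x) = 0\}$, and since the bundle is trivial every element of $\RR$ arises as $f(x)$ for a constant $f \in \M$; hence $\pi_{\tau_{\delta_x}}(\M)'' \cong \RR$, a type~II$_1$ factor. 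So $(\M, X)$ is type~II$_1$ factorial.

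The one genuinely nontrivial ingredient is that $\M$ has complemented partitions of unity, and here I would use that $\RR$ is McDuff: the trivial bundle $C_\sigma(K,\RR)$ has uniform property $\Gamma$, and property $\Gamma$ yields CPoU in the type~II$_1$ factorial setting (cf.\ \cite{CE, CETW, TraciallyComplete}); one may also cite CPoU for trivial W$^*$-bundles with hyperfinite fibre directly. With CPoU available, Theorem~\ref{thm:main-TC} applies and gives $T(\M) = X$. Unwinding the definition of $X$, every $\tau \in T(\M)$ equals $\tau_\mu$ for some $\mu \in \mathrm{Prob}(K)$, which is precisely the formula \eqref{eqn:main-bundle1}; as a by-product the map $\mu \mapsto \tau_\mu$ is an affine homeomorphism $\mathrm{Prob}(K) \xrightarrow{\ \cong\ } T(\M)$.

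The main obstacle is verifying CPoU for $C_\sigma(K,\RR)$ over an arbitrary compact Hausdorff $K$ — in particular not assuming $K$ finite-dimensional — rather than anything in the bookkeeping above; everything else reduces to elementary manipulations with the W$^*$-bundle structure and basic Choquet theory for $\mathrm{Prob}(K)$. A secondary point to treat carefully is simply confirming that the $X$ defined above is the correct distinguished trace space making $(\M, X)$ tracially complete, which is immediate from the displayed norm identity, after which Theorem~\ref{thm:main-TC} does the rest.
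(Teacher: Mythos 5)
Your proposal is correct and follows essentially the same route as the paper: identify $X$ as the traces induced by $\mathrm{Prob}(K)$, check that $(\M,X)$ is a type II$_1$ factorial tracially complete C$^*$-algebra, obtain CPoU from property $\Gamma$ of $\RR$ via constant functions, and apply Theorem~\ref{thm:main-TC}. The only cosmetic difference is that the paper factors the argument through a slightly more general proposition covering an arbitrary II$_1$ factor fibre with property $\Gamma$ (or a totally disconnected base) before specialising to $\RR$.
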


Theorem~\ref{thm:main-TC} is proven using the theory of Cuntz subequivalence  (\cite{Cu78}). The strategy is inspired by Murray and von Neumann's original proof that II$_1$ factors have a unique trace: they were able to prove enough about Murray--von Neumann subequivalence of projections that there was only one possible candidate for a tracial state (\cite{MvN36,MvN37}). In our case, we prove the following result (see Section \ref{subsec:strict-comparison} for the relevant definitions):
\begin{theorem}\label{thm:main-strict-comparison}
    Let $(\M, X)$ be a type II$_1$ factorial tracially complete C$^*$-algebra with complemented partitions of unity. Then $\M$ has strict comparison with respect to the traces in $X$.
\end{theorem}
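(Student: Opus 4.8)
The plan is to mirror Murray and von Neumann's strategy: use the factorial fibres to solve the comparison problem trace by trace, and use complemented partitions of unity to patch the local solutions into a global one. Write $d_\tau(a) := \lim_n \tau(a^{1/n})$, and recall that $\langle a \rangle \leq \langle b \rangle$ in the Cuntz semigroup iff $(a-\epsilon)_+ \precsim b$ for every $\epsilon>0$. Since $M_k(\M)$ is again type II$_1$ factorial tracially complete with CPoU (with distinguished traces $\{\tau \otimes \mathrm{tr}_k : \tau \in X\}$), it suffices to prove: whenever $a,b \in \M_+$ satisfy $d_\tau(a) < d_\tau(b)$ for all $\tau \in X$, then $(a-\epsilon)_+ \precsim b$ for every $\epsilon > 0$.

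Fix $\epsilon > 0$. First I would extract a uniform gap: bounding $d_\tau((a-\epsilon)_+)$ above by $\tau(f(a))$ for a suitable continuous $f$ and $d_\tau(b)$ below by the increasing limit of the continuous functions $\tau(g_\delta(b))$, a compactness argument on the compact set $X$ produces $\delta > 0$ and $c>0$ with $d_\tau((a-\epsilon)_+) + c \leq d_\tau((b-\delta)_+)$ for all $\tau \in X$. Put $b_0 := (b-\delta)_+$, so $b_0 \precsim b$ and it is enough to show $(a-\epsilon)_+ \precsim b_0$. For the local step, fix $\tau \in X$ and pass to the finite von Neumann algebra $\M_\tau = \pi_\tau(\M)''$; since every normal tracial state of $\M_\tau$ restricts through $\pi_\tau$ to a trace in $X$, the gap is inherited by all of them, so comparison theory of positive elements in finite von Neumann algebras gives $(a-\epsilon)_+ \precsim b_0$ in $\M_\tau$. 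Here the gap is essential: conjugating a spectral projection of $b_0$ lying above level $\delta$ and of trace exceeding $d_{\psi}((a-\epsilon)_+)$ for all normal traces $\psi$, one realises the subequivalence by $r_\tau \in \M_\tau$ with $r_\tau b_0 r_\tau^* = (a-\epsilon)_+$ and $\| r_\tau \| \leq L$, where $L$ depends only on $\epsilon, \delta, \|a\|$. Lifting $r_\tau$ through $\M \to \M_\tau$ by Kaplansky density yields $\hat r_\tau \in \M$ with $\| \hat r_\tau \| \leq L$ and $\| \hat r_\tau b_0 \hat r_\tau^* - (a-\epsilon)_+ \|_{2,\tau}$ as small as we wish.

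Now patch. Given $\eta > 0$, each $\sigma \mapsto \| \hat r_\tau b_0 \hat r_\tau^* - (a-\epsilon)_+ \|_{2,\sigma}$ is weak-$*$ continuous on $X$ and $< \eta$ at $\tau$, hence on a neighbourhood $U_\tau \ni \tau$; compactness of $X$ gives a finite subcover $U_{\tau_1}, \dots, U_{\tau_n}$. The positive elements $e_i := (\hat r_{\tau_i} b_0 \hat r_{\tau_i}^* - (a-\epsilon)_+)^*(\hat r_{\tau_i} b_0 \hat r_{\tau_i}^* - (a-\epsilon)_+)$ then satisfy $\min_i \sigma(e_i) < \eta^2$ for all $\sigma \in X$, so CPoU applied to $e_1, \dots, e_n$ relative to the finite set $\{a, b, \hat r_{\tau_1}, \dots, \hat r_{\tau_n}\}$ produces pairwise orthogonal positive contractions $p_1, \dots, p_n$ in $\M$, commuting with that set, with $\sum_i p_i = 1$ and $\sigma(p_i e_i) \leq \eta^2 \sigma(p_i)$ for all $\sigma, i$. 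Setting $r := \sum_i p_i^{1/2} \hat r_{\tau_i}$, orthogonality and commutation of the $p_i$ give $r b_0 r^* = \sum_i p_i \hat r_{\tau_i} b_0 \hat r_{\tau_i}^*$, $\| r b_0 r^* \| \leq L^2 \|b\|$, and $\| r b_0 r^* - (a-\epsilon)_+ \|_{2,X} \leq \eta$.

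The remaining step — turning these uniform-$2$-norm estimates into an honest Cuntz subequivalence — is where I expect the real difficulty to lie, since CPoU only controls the patched element tracially while $\precsim$ is an operator-norm relation, and $2$-norm proximity alone is far too weak (it does not even control dimension functions). The uniform gap $c$ is precisely the slack that should make this possible. One natural route is an iteration: given $r$ with $\|r b_0 r^* - (a-\epsilon)_+\|_{2,X} < \gamma$ and $\|r\| \leq L$, exploit the openness of $x \mapsto x b_0 x^*$ near its solution set in each fibre — quantified by the Hölder/Lipschitz behaviour of continuous functional calculus for $\|\cdot\|_{2,\tau}$, together with the factorial structure and the gap — and CPoU for the corrections, to produce $r'$ with $\|r' - r\|_{2,X} \lesssim \sqrt{\gamma}$ and half the error; the resulting sequence is $\|\cdot\|_{2,X}$-Cauchy with norm-bounded terms, so, as $(\M, \|\cdot\|_{2,X})$ is complete on bounded sets and $\|\cdot\|_{2,X}$ is a norm on $\M$, its limit $r_\infty \in \M$ satisfies $r_\infty b_0 r_\infty^* = (a-\epsilon)_+$ exactly, whence $(a-\epsilon)_+ \precsim b_0 \precsim b$. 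Making the corrections uniform over all of $X$ is the delicate point, and is where I would expect to spend most of the effort.
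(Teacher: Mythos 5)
Your reductions (to matrix amplifications, to a uniform gap via cut-downs, and the fibrewise-plus-CPoU patching that yields $r \in \M$ with $\|r\| \leq L$ and $\|r b_0 r^* - (a-\eps)_+\|_{2,X} < \eta$) are all sound and broadly parallel to steps in the paper. But the step you yourself flag as the real difficulty is a genuine gap, and it is essentially the entire content of the theorem. Your iteration scheme requires a quantitative stability statement for the equation $r b_0 r^* = (a-\eps)_+$: that an approximate solution (approximate in $\|\cdot\|_{2,X}$, with an operator-norm bound) can be corrected to a better approximate solution by a perturbation of size $\lesssim \sqrt{\gamma}$ \emph{without losing the operator-norm bound $L$}, uniformly over all fibres, and in a form compatible with another round of CPoU patching. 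No such stability result is supplied, and none is available off the shelf: the solution set of $x b_0 x^* = (a-\eps)_+$ is not a single orbit, the map $x \mapsto x b_0 x^*$ has no useful openness in the $2$-norm, and each CPoU correction only returns tracial (not operator-norm) control, so the bound on $\|r_n\|$ is exactly what threatens to blow up along the iteration. As written, the argument proves only an approximate $2$-norm subequivalence, which, as you note, does not control dimension functions, let alone Cuntz classes.

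The paper circumvents this by never trying to solve the quadratic equation for $r$ at all. The only equation it needs to solve exactly is $p^2 = p$, for which $2$-norm stability \emph{is} classical (Lemma~\ref{lem:stability}): a $\|\cdot\|_{2,X}$-Cauchy sequence of approximate projections converges to an honest projection by tracial completeness. The crucial extra idea --- the one missing from your proposal --- is to run the CPoU construction entirely inside the $\|\cdot\|$-closed hereditary subalgebra $\overline{a\M a}$ (Lemma~\ref{lem:onestep} and Theorem~\ref{thm:projections}), which converts the purely tracial output of CPoU into operator-norm information: the limiting projection $p$ satisfies $bp = pb = p$ for every $b$ acting as a unit on $a$, hence $(a-\eps)_+ \precsim p \precsim a$ (Theorem~\ref{thm:Cuntz-dense}). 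One then compares the projections $p$ and $q$ obtained from $a$ and $b$ using the already-established exact Murray--von Neumann comparison theorem \cite[Theorem~7.17]{TraciallyComplete}, rather than comparing the positive elements directly. If you want to rescue your approach, you would either need to prove the uniform stability/iteration lemma for $x \mapsto x b_0 x^*$ (which I expect to be at least as hard as the theorem), or follow the paper's reduction to projections.
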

Unlike Murray and von Neumann, we need to work with a comparison theory for general positive elements (not just projections) because we don't know that projections have a $\|\cdot\|$-dense linear span. 
The argument for deducing Theorem~\ref{thm:main-TC} from Theorem~\ref{thm:main-strict-comparison} is based on a result of Ng--Robert (\cite{NgR16}) together with some Choquet theory.
The idea is that the traces in $X$ determine the Cuntz comparison theory of $\M$ (up to a small error) thanks to strict comparison, which in turn imposes such severe constraints on all traces of $\M$ that $T(\M) = X$ holds.

It remains to say a few words about the proof of Theorem~\ref{thm:main-strict-comparison}. The main idea is to show that hereditary C$^*$-subalgebras of $\M$ contain a wealth of projections. In essence, we show that $\M$ has real rank zero at the level of the Cuntz semigroup; see Theorem~\ref{thm:Cuntz-dense} for the formal statement. This combined with the Murray--von Neumann comparison theory for projections developed in \cite{TraciallyComplete} is used to prove Theorem~\ref{thm:main-strict-comparison}.

It is important to emphasise that we are working with $\|\cdot\|$-closed hereditary C$^*$-subalgebras and that the Cuntz subequivalence also uses the $\|\cdot\|$-norm (as usual). Indeed, if we only established analytic properties of $\M$ with respect to the uniform 2-norm $\|\cdot\|_{2,X}$, we could not expect to prove anything about non-$\|\cdot\|_{2,X}$-continuous traces. 

The projections in $\M$ are obtained as limits of $\|\cdot\|$-bounded, $\|\cdot\|_{2,X}$-Cauchy sequences of approximate projections. By completeness, such sequences converge in $\M$. We are able to construct these Cauchy sequences in a fixed $\|\cdot\|$-closed hereditary C$^*$-subalgebra and show that the limit remains in a (slightly larger) $\|\cdot\|$-closed hereditary C$^*$-subalgebra; see Theorem \ref{thm:projections}.

The heavy lifting takes place in Lemma~\ref{lem:onestep}, where we construct approximate projections in a $\|\cdot\|$-closed hereditary C$^*$-subalgebra of $\M$, while remaining close to a previously constructed approximate projection. The proof uses complemented partitions of unity (CPoU) in order to ``glue together'' the analogous von Neumann algebraic result (see Proposition~\ref{prop:close-projection}), which holds in each fibre $\pi_\tau(\M)''$. CPoU (see Definition~\ref{def:CPoU}) was developed in \cite{CETWW} precisely for implementing this kind of ``tracial gluing'' argument. A good introduction to this technique can be found in \cite[Section 7]{TraciallyComplete}.

Experts will be aware that CPoU is usually only able to prove facts about a tracially complete C$^*$-algebra $(\M, X)$ up to a small $\|\cdot\|_{2,X}$-error, which would not suffice for this application. The crucial new observation in this paper is that $\|\cdot\|$-closed hereditary C$^*$-subalgebras of $\M$ can be used to provide some level of $\|\cdot\|$-norm control over the output of CPoU arguments. This observation is likely to have further applications to the structure theory of tracially complete C$^*$-algebras beyond the trace problem.

\subsection*{Acknowledgements}
I'd like to thank Aaron Tikuisis, Hannes Thiel, Andrea Vaccaro and Stuart White for their comments on an earlier version of this manuscript. I'd also like to thank the anonymous referee for their comments.

\renewcommand{\thetheorem}{\arabic{theorem}}
\numberwithin{theorem}{section} 
\section{Preliminaries}\label{sec:prelims}

In this section, we recall the key definitions used in this paper and collect the required preliminaries for the main argument.

\subsection{Traces and Choquet simplices}\label{subsec:choquet}
By a \emph{trace} on a C$^*$-algebra $A$, we mean a tracial state. We write $T(A)$ for the set of all tracial states on $A$. The trace space $T(A)$ is a convex subset of $A^*$. We endow $T(A)$ with the subspace topology induced by the weak$^*$ topology on $A^*$. 

A \emph{Choquet simplex} $X$ is a compact convex set where every $x \in X$ is the barycentre of a unique Radon probability measure $\mu_x$ supported on the extreme boundary $\partial_e X$ of $X$; see for example \cite[Theorem II.3.6]{Alf71}.\footnote{When $X$ is non-metrizable, $\partial_e X$ need not be Borel, so \emph{supported on the extreme boundary} should be interpreted as $\mu_x(E) = 0$ for any Baire measurable set $E$ that doesn't intersect $\partial_e X$.} 
The trace space $T(A)$ of a unital C$^*$-algebra is a Choquet simplex by \cite[Theorem~3.1.18]{Sak98}. 
When $A$ is non-unital, $T(A)$ need not be compact; however, if $T(A)$ is compact, then $T(A)$ is a Choquet simplex; see for example \cite[Theorem 2.6]{TraciallyComplete}. 

In this subsection, we recall some important facts about continuous affine function on Choquet simplices in general and on $T(A)$ in particular. We write $\mathrm{Aff}(X)$ for the space of continuous affine functions $X \rightarrow \R$, $\mathrm{Aff}(X)_+$ for the non-negative valued affine functions $X \rightarrow [0,\infty)$, and $\|\cdot\|_\infty$ for the supremum norm on $\mathrm{Aff}(X)$.
The following result is well-known and is often attributed to Cuntz and Pedersen (\cite{CP79}); see \cite[Proposition~2.1]{CGSTW} and \cite[Proposition~2.7]{TraciallyComplete} for a proof.
\begin{proposition}
	\label{prop:CP}
	Let $A$ be a unital C$^*$-algebra and let $f \in \mathrm{Aff}(T(A))$ be a continuous affine function.
	Then for any $\epsilon>0$, there is a self-adjoint element $a \in A$ such that
	\begin{equation} \|a\| \leq \|f\|_\infty + \epsilon \quad \text{and} \quad \tau(a) = f(\tau), \quad \tau \in T(A). \end{equation}
        Moreover, if $f(\tau) > 0$ for all $\tau \in T(A)$, we may assume $a \in A_+$.
\end{proposition}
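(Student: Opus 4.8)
The plan is to reduce Proposition~\ref{prop:CP} to the Cuntz--Pedersen analysis of the evaluation map
\[ \rho\colon A_{\mathrm{sa}} \to \mathrm{Aff}(T(A)), \qquad \rho(a)(\tau) = \tau(a), \]
where $A_{\mathrm{sa}}$ is the self-adjoint part of $A$. Write $N := \ker\rho = \{a \in A_{\mathrm{sa}} : \tau(a) = 0 \text{ for all } \tau \in T(A)\}$, a closed subspace. I would prove three things: (i) $\rho$ has dense range; (ii) the induced injection $\bar\rho\colon A_{\mathrm{sa}}/N \to \mathrm{Aff}(T(A))$ is isometric for the quotient norm; and (iii) every strictly positive $f \in \mathrm{Aff}(T(A))$ lifts through $\rho$ within $A_+$ with the same norm control. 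Granting (i) and (ii): since $\mathrm{Aff}(T(A))$ is complete, $\bar\rho$ is an isometric bijection, so the coset $\bar\rho^{-1}(f)$ has norm exactly $\|f\|_\infty$ and hence contains a representative $a \in A_{\mathrm{sa}}$ with $\|a\| < \|f\|_\infty + \epsilon$; by construction $\tau(a) = f(\tau)$ for all $\tau$. The positivity clause is then exactly (iii), using that $f > 0$ on the compact set $T(A)$ forces $f \ge \delta$ for some $\delta > 0$.

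For (i), if some $g \in \mathrm{Aff}(T(A))$ lay outside $\overline{\rho(A_{\mathrm{sa}})}$, Hahn--Banach would give a nonzero $\phi \in \mathrm{Aff}(T(A))^*$ vanishing on $\rho(A_{\mathrm{sa}})$. Since $\mathrm{Aff}(T(A))$ is an order-unit space with unit the constant function $1$, $\phi$ decomposes as $\phi = \lambda_1 \mathrm{ev}_{\tau_1} - \lambda_2 \mathrm{ev}_{\tau_2}$ with $\lambda_i \ge 0$ and $\tau_i \in T(A)$: a positive functional on $\mathrm{Aff}(T(A))$ is a nonnegative multiple of a state, a state extends to a state on $C(T(A))$, which is integration against a Radon probability measure $\mu$, and on affine continuous functions this equals evaluation at the barycentre $\mathrm{bar}(\mu) \in T(A)$. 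Then $\phi|_{\rho(A_{\mathrm{sa}})} = 0$ forces $\lambda_1 \tau_1 = \lambda_2 \tau_2$ on $A_{\mathrm{sa}}$, hence on $A$; testing at $1_A$ gives $\lambda_1 = \lambda_2$, so either $\lambda_1 = 0$ or $\tau_1 = \tau_2$, and in both cases $\phi = 0$, a contradiction.

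The substantive work is (ii), which I expect to be the main obstacle, and it is here that one needs the genuine Cuntz--Pedersen input. Since $\overline{[A,A]}_{\mathrm{sa}} \subseteq N$, it is enough to prove: given $a \in A_{\mathrm{sa}}$ with $\sup_{\tau \in T(A)}|\tau(a)| \le 1$ and $\epsilon > 0$, there is $n \in \overline{[A,A]}_{\mathrm{sa}}$ with $\|a + n\| \le 1 + \epsilon$. The mechanism I would follow is to pass to the enveloping von Neumann algebra $A^{**}$: every trace on $A$ extends to a normal trace on $A^{**}$, and all of these are supported on a finite von Neumann subalgebra $P = zA^{**}$ for a suitable central projection $z$, so that $\sup_\tau |\tau(a)| = \|\ctr_P(za)\|$, where $\ctr_P\colon P \to Z(P)$ is the centre-valued trace of $P$. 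The Dixmier property of $P$ then yields a convex combination of unitary conjugates of $za$ lying within $\epsilon$ of $\ctr_P(za)$ in operator norm; a Kaplansky-density argument transfers this averaging back inside $A$, and the resulting correction term $n$ lies in $\overline{[A,A]}_{\mathrm{sa}}$ because each $uxu^* - x$ is a commutator. One then has $\|a + n\| \le \|\ctr_P(za)\| + \epsilon \le 1 + \epsilon$.

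For (iii) one re-runs the same scheme retaining positivity throughout (a self-adjoint exact lift of norm $\le \|f\|_\infty + \epsilon$ does not by itself produce a positive one, since tracial positivity of an element is far weaker than positivity, so (iii) is genuinely a separate point). As the excerpt already notes that this proposition is standard, in practice I would simply quote it from \cite{CP79}, or from \cite[Proposition~2.1]{CGSTW} or \cite[Proposition~2.7]{TraciallyComplete}; all the real content is concentrated in step (ii).
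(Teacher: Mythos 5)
The paper offers no proof of Proposition~\ref{prop:CP}: it cites \cite{CP79}, \cite[Proposition~2.1]{CGSTW} and \cite[Proposition~2.7]{TraciallyComplete}, so your closing remark that in practice you would quote those sources is consistent with what the paper does. Judged as a proof, however, your sketch has a genuine gap at precisely the step you identify as the crux, namely (ii). After Dixmier averaging in the finite central summand $P=zA^{**}$ you hold unitaries $u_i\in P$ with $\bigl\|\sum_i\lambda_i u_i(za)u_i^*-\ctr_P(za)\bigr\|$ small, and you propose to ``transfer this averaging back inside $A$'' by Kaplansky density. This cannot work as stated. First, the operator norm is only lower semicontinuous along strong$^*$ limits, so knowing that $\sum_i\lambda_i v_ia v_i^*$ (with $v_i\in A$ unitaries strong$^*$-approximating the $u_i$) converges to something of small norm gives no upper bound on $\|\sum_i\lambda_i v_iav_i^*\|$ --- the inequality goes the wrong way. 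Second, and more fundamentally, unitaries of $A$ act on all of $A^{**}$: extending $u_i$ to the unitary $u_i+(1-z)$ of $A^{**}$, the relevant strong limit is $\sum_i\lambda_iu_i(za)u_i^*+(1-z)a$, and the corner $(1-z)a$ --- invisible to every trace but possibly of norm $\|a\|$ (take $A=B\oplus\mathbb{C}$ with $B$ simple, unital and traceless, and $a$ supported in $B$) --- is never attenuated by any averaging over unitaries of $A$. So this route cannot reach the bound $\sup_{\tau}|\tau(a)|+\epsilon$.

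The isometry in (ii) is nevertheless true, and has a soft proof that bypasses commutators entirely; note that the proposition only needs the correction term to lie in $N$, not in $\overline{[A,A]}_{\mathrm{sa}}$, and identifying those two spaces is the hard theorem of \cite{CP79}, which is not required here. By duality for quotient spaces, $\mathrm{dist}(a,N)=\sup\{|\phi(a)|:\phi\in N^{\perp},\ \|\phi\|\le 1\}$, and by the bipolar theorem $N^{\perp}$ is the weak$^*$-closed real span of $T(A)$ inside $(A^{*})_{\mathrm{sa}}\cong(A_{\mathrm{sa}})^{*}$, which is exactly the space of bounded self-adjoint tracial functionals. The norm-additive Jordan decomposition $\phi=\phi_+-\phi_-$ with $\|\phi\|=\|\phi_+\|+\|\phi_-\|$ is unique, hence $\mathrm{Ad}(u)$-equivariant, so $\phi_{\pm}$ are tracial whenever $\phi$ is; thus every $\phi$ in the unit ball of $N^{\perp}$ equals $\lambda_1\tau_1-\lambda_2\tau_2$ with $\tau_i\in T(A)$ and $\lambda_1+\lambda_2\le 1$, giving $|\phi(a)|\le\sup_{\tau\in T(A)}|\tau(a)|$ and hence the isometry. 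Finally, (iii) is not obtained by ``re-running the same scheme'': the standard argument takes a self-adjoint lift $a_0$ of $f$ with $\|a_0\|\le\|f\|_\infty+\epsilon$, notes that $\tau(a_0^-)<\tau(a_0^+)$ uniformly over $T(A)$ because $f\ge\delta>0$, and invokes the Cuntz--Pedersen comparison theorem to write $a_0^-=\sum_kx_k^*x_k$ with $\sum_kx_kx_k^*\le a_0^+$, so that $a:=a_0^+-\sum_kx_kx_k^*$ is positive, satisfies $\tau(a)=f(\tau)$, and has $\|a\|\le\|a_0^+\|\le\|a_0\|$. That comparison theorem is the genuine input from \cite{CP79}, and nothing in your sketch substitutes for it.
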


The other results in this subsection concern closed faces of Choquet simplices. Recall that a \emph{face} of a convex set $X$ is a convex subset $F \subseteq X$ such that for all $\tau_1,\tau_2 \in X$, we have $\tau_1, \tau_2 \in F$ whenever $\tfrac{1}{2}(\tau_1+\tau_2) \in F$. 

Firstly, we record the following extension theorem for continuous affine functions.
\begin{proposition}[{\cite[Theorem~II.5.19]{Alf71}}]\label{prop:extending-affine}
	Let $F$ be a closed face of a Choquet simplex $X$.
	For every $f \in \mathrm{Aff}(F)$, there exists $\hat{f} \in \mathrm{Aff}(X)$ with $\hat{f}|_F = f$ and $\|\hat{f}\|_\infty = \|f\|_\infty$. 
\end{proposition}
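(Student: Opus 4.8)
The plan is to reduce to non-negative functions, construct sub- and super-extensions of $f$ to all of $X$, and pinch them together with a continuous affine function using the defining property of a simplex.

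First I would reduce to the non-negative case. Given $f \in \mathrm{Aff}(F)$ with $M := \|f\|_\infty$ (the case $M = 0$ being trivial), it suffices to extend $p := f + M \in \mathrm{Aff}(F)_+$ to some $\hat p \in \mathrm{Aff}(X)_+$ with $\|\hat p\|_\infty = \|p\|_\infty$; then $\hat f := \hat p - M$ restricts to $f$ on $F$, and since $\|p\|_\infty = \sup_F(f+M) \le 2M$ one gets $-M \le \hat f \le M$, hence $\|\hat f\|_\infty = M$ (the inequality $\|\hat f\|_\infty \ge M$ being immediate from $\hat f|_F = f$). So the task becomes: extend a given $p \in \mathrm{Aff}(F)_+$ to a function in $\mathrm{Aff}(X)$ that is non-negative and bounded above by $\|p\|_\infty$.

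Next I would bring in the structural fact about Choquet simplices that fails for general compact convex sets: every closed face is a \emph{split face}. Writing $\mu_x$ for the unique representing measure of $x$ on $\partial_e X$, the complementary face $F^{\diamond} := \{x : \mu_x(F) = 0\}$ together with $F$ expresses each $x \in X$ uniquely as a convex combination of a point of $F$ and a point of $F^{\diamond}$, the weight $\lambda_F(x) := \mu_x(F)$ being affine in $x$. Granting this, one works with the upper envelope $\bar p(x) := \inf\{a(x) : a \in \mathrm{Aff}(X),\ 0 \le a \le \|p\|_\infty,\ a|_F \ge p\}$, which is non-negative, concave, upper semicontinuous and $\le \|p\|_\infty$, together with the analogous lower envelope $\underline p$; the goal is to show that both restrict to $p$ on $F$ and that $\underline p \le \bar p$ on all of $X$. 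Edwards' separation theorem for simplices (a continuous affine function can be inserted between a bounded lower semicontinuous convex function and a larger bounded upper semicontinuous concave one) then provides $\hat p \in \mathrm{Aff}(X)$ with $\underline p \le \hat p \le \bar p$, and this $\hat p$ extends $p$ and satisfies $0 \le \hat p \le \|p\|_\infty$. An alternative packaging of the same facts: for a simplex $\mathrm{Aff}(X)$ is an $L^1$-predual and $\{a \in \mathrm{Aff}(X) : a|_F = 0\}$ is an $M$-ideal, hence proximinal, which gives attainment of the quotient norm directly.

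The main obstacle is precisely the step of showing that the envelopes behave well — that there are enough non-negative affine functions on $X$ for $\bar p$ and $\underline p$ to pinch down to $p$ along $F$ and to satisfy $\underline p \le \bar p$ globally. This is where the simplex hypothesis is indispensable, and the difficulty is concrete: $\lambda_F$ is in general only upper semicontinuous, so one cannot simply multiply an extension of $p$ from $F$ by $\lambda_F$ and stay inside $\mathrm{Aff}(X)$. For $F = \mathrm{Prob}(C)$ inside the Bauer simplex $\mathrm{Prob}(K)$ with $C \subseteq K$ closed, $\lambda_F$ is the integral of $\mathbf{1}_C$ and jumps across the topological boundary of $C$; in that Bauer case the whole statement collapses, via the isometric identification $\mathrm{Aff}(\mathrm{Prob}(K)) \cong C(K)$, to the norm-preserving Tietze extension theorem for $C(C) \hookrightarrow C(K)$. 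This is the model I would try to imitate in general, via a successive-approximation scheme that corrects the discontinuity in stages and invokes Edwards' theorem on each correction term.
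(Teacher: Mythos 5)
This proposition is quoted in the paper directly from Alfsen \cite[Theorem~II.5.19]{Alf71} with no proof supplied, so the comparison is against the standard argument in the literature rather than anything in the text. Your proposal correctly identifies the right circle of ideas (split faces, Edwards' separation theorem, the failure of continuity of $\lambda_F$), but it does not constitute a proof: the entire content of the theorem is concentrated in the step you defer to an unspecified ``successive-approximation scheme that corrects the discontinuity in stages.'' Identifying the obstacle is not the same as overcoming it, and nothing in the proposal explains how the correction terms would be produced or why the scheme would converge.

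There is also a concrete error in the setup. The envelopes you define, $\bar p(x) = \inf\{b(x) : b \in \mathrm{Aff}(X),\ 0 \le b \le \|p\|_\infty,\ b|_F \ge p\}$ and the analogous $\underline{p}$, do \emph{not} satisfy $\underline{p} \le \bar p$ in general, because the constraints tie $a$ and $b$ together only on $F$. Take $X = [0,1]$, $F = \{0\}$, $p(0) = \tfrac12$: then $b(t) = \tfrac12(1-t)$ is admissible for the upper envelope, giving $\bar p(1) = 0$, while the constant $a \equiv \tfrac12$ is admissible for the lower envelope, giving $\underline{p}(1) = \tfrac12 > \bar p(1)$. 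So Edwards' theorem cannot be applied to this pair. The envelopes that actually work are the barycentric ones, $x \mapsto \int_F p\, d\mu_x + c\,\mu_x(X \setminus F)$ with $c = 0$ and $c = \sup_F p$ respectively; these are affine by uniqueness of representing measures, they agree with $p$ on $F$ because $\mu_x$ is supported on $F$ for $x$ in the closed face $F$, and they sandwich correctly. The genuine work --- which is what Alfsen's proof supplies and your proposal omits --- is verifying that the upper one is upper semicontinuous and the lower one lower semicontinuous (via the upper semicontinuity of $\lambda_F = \mu_{(\cdot)}(F)$ for a closed face of a simplex), so that Edwards' theorem produces the continuous affine extension. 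The $M$-ideal reformulation at the end has the same gap: proximinality gives norm attainment only after one already knows the restriction map $\mathrm{Aff}(X) \to \mathrm{Aff}(F)$ is surjective, which is the statement being proved.
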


Secondly, we note that every closed face of a Choquet simplex is \emph{relatively exposed}, in the sense of the following proposition.
\begin{proposition}[{\cite[Corollary~II.5.20]{Alf71}}]\label{prop:exposed}
	Let $F$ be a closed face in the Choquet simplex $X$, and let $x_0 \in X \setminus F$.
        Then there exists $f \in \mathrm{Aff}(X)_+$ with $f|_F = 0$ and $f(x_0) > 0$.
\end{proposition}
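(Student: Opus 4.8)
\emph{Proof plan.} The plan is to exploit the fact that closed faces of a Choquet simplex are split faces. First I would recall that, since $X$ is a Choquet simplex, the closed face $F$ has a complementary face $F'$ together with a well-defined affine coefficient function $\rho := \lambda_{F'} : X \to [0,1]$ satisfying $\rho|_F = 0$, $\rho|_{F'} = 1$, and $\rho(x) > 0$ for every $x \in X \setminus F$; in particular $\rho(x_0) > 0$. Moreover, because $F$ is closed, the complementary coefficient $1 - \rho = \lambda_F$ is upper semicontinuous, so $\rho$ itself is lower semicontinuous. (All of this is standard simplex theory; see \cite[Chapter~II]{Alf71}.) Thus $\rho$ is a bounded, non-negative, lower semicontinuous affine function on $X$ that vanishes on $F$ and is strictly positive at $x_0$ --- it would already serve as the required $f$ were it continuous.

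It therefore suffices to produce $f \in \mathrm{Aff}(X)$ with $0 \le f \le \rho$ and $f(x_0) > 0$: then $0 \le f$ gives $f \in \mathrm{Aff}(X)_+$, while $f \le \rho$ forces $f|_F \le \rho|_F = 0$ and hence $f|_F = 0$. To obtain such an $f$ I would invoke the following consequence of the interpolation property that characterises simplexes (Edwards' separation theorem): on a Choquet simplex, a bounded non-negative lower semicontinuous affine function is the pointwise supremum of the continuous affine functions lying below it and above $0$ (see \cite[Chapter~II]{Alf71}). Evaluating this supremum at $x_0$, where it equals $\rho(x_0) > 0$, produces the desired $f$.

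The main obstacle is precisely this last step --- the continuity upgrade, i.e.\ passing from the merely lower semicontinuous $\rho$ to a $\|\cdot\|_\infty$-continuous affine function squeezed between $0$ and $\rho$ that is still positive at $x_0$ --- and it is exactly here that the Choquet simplex hypothesis is essential: for a general compact convex set Proposition~\ref{prop:extending-affine} can fail, and a naive approach (separate $x_0$ from $F$ by an affine function and then correct it on $F$ via a norm-controlled extension) does not work, because such corrections give no control away from $F$. The remainder of the argument is routine bookkeeping with the split-face decomposition.
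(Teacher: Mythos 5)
Your argument is sound. Note that the paper offers no proof of this proposition at all -- it is quoted verbatim from \cite[Corollary~II.5.20]{Alf71} -- so there is no internal argument to compare against; what you have written is essentially the standard derivation that underlies Alfsen's result. Both of your ingredients are genuine theorems about simplices: every closed face $F$ of a Choquet simplex is a split face, and for a \emph{closed} split face the coefficient $\lambda_F = 1-\rho$ is upper semicontinuous (a compactness argument: if $x_i \to x$ and $x_i = \lambda_i y_i + (1-\lambda_i)z_i$ with $y_i \in F$, $z_i \in F'$, pass to convergent subnets and use uniqueness of the split decomposition to get $\lambda_F(x) \geq \lim_i \lambda_i$). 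The one step I would make explicit is the internal structure of your ``continuity upgrade'': the existence of \emph{some} continuous affine minorant $h \leq \rho$ with $h(x_0) > 0$ is pure Hahn--Banach (separate the point $(x_0,\alpha)$, $0<\alpha<\rho(x_0)$, from the closed convex epigraph of the l.s.c.\ affine function $\rho$) and holds on any compact convex set; the simplex hypothesis, via Edwards' separation theorem, is needed only to interpolate a continuous affine function between the convex continuous function $\max(h,0)$ and the concave l.s.c.\ function $\rho$, which is exactly what makes the minorant non-negative while keeping it $\leq \rho$ and positive at $x_0$. With that reading your quoted supremum statement is correct and the proof closes; your closing remark correctly locates where the simplex hypothesis is indispensable.
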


\subsection{Uniform trace norms and uniform tracial completions}\label{subsec:completion}

The uniform tracial completion, also known as the strict closure, of a C$^*$-algebra $A$ with $T(A) \neq \emptyset$ was introduced by Ozawa in \cite{Oz13}. In this subsection, we recall the construction and set out our notational conventions. We work in a slightly more general setting than in the introduction to this paper, allowing uniform trace norms with respect to subsets of the trace simplex. General references for the material in this subsection are \cite{Oz13, Ev16, TraciallyComplete}.

Let $A$ be a C$^*$-algebra. Given a non-empty compact convex subset $X \subseteq T(A)$, we define the seminorm
\begin{equation}
    \|a\|_{2,X} = \sup_{\tau \in X} \|a\|_{2,\tau} = \sup_{\tau \in X} \tau(a^*a)^{1/2}
\end{equation}
for $a \in A$. If this seminorm is a norm, we say that $X$ is \emph{faithful} and call $\|\cdot\|_{2,X}$ the \emph{uniform 2-norm} with respect to $X$. It is easily seen that addition, scalar multiplication, the adjoint, and every $\tau \in X$ is continuous with respect to $\|\cdot\|_{2,X}$. Multiplication is  $\|\cdot\|_{2,X}$-continuous when restricted to a $\|\cdot\|$-bounded subset of $A$, since 
\begin{equation}
    \|ab\|_{2,X} \leq \|a\|\|b\|_{2,X}
\end{equation}
for all $a,b \in A$. This inequality also shows the $\|\cdot\|_{2,X}$-continuity of left multiplication by a fixed $a \in A$. Taking adjoints, we get that $\|ab\|_{2,X} \leq \|a\|_{2,X}\|b\|$ for all $a,b \in A$; the $\|\cdot\|_{2,X}$-continuity of right multiplication follows. We can now define the uniform tracial completion.

\begin{definition}\label{def:UTCompletion}
	Let $A$ be a C$^*$-algebra. For a compact convex set $X \subseteq T(A)$, the \emph{uniform tracial completion} of $A$ with respect to $X$ is the C$^*$-algebra
	\begin{equation}
		\completion{A}{X} = \frac{\{(a_n)_{n=1}^\infty \in \ell^\infty(A): (a_n)_{n=1}^\infty \text{ is }\|\cdot\|_{2,X}\text{-Cauchy}\}}{\{(a_n)_{n=1}^\infty \in \ell^\infty(A): (a_n)_{n=1}^\infty \text{ is }\|\cdot\|_{2,X}\text{-null}\}}.
	\end{equation}
        The case $X=T(A)$ is of particular interest.
\end{definition}

The $^*$-homomorphism $\iota:A \to \completion{A}{X}$ given by sending $a \in A$ to the image of the constant sequence $(a,a,\dots)$ is an embedding when $X$ is faithful, and by replacing $A$ with a suitable quotient there is no loss of generality by restricting to this case. We shall identify $A$ with $\iota(A)$. 

The uniform 2-norm on $A$ extends to a norm on the uniform tracial completion given by $(a_n)_{n=1}^\infty \mapsto \lim_{n\rightarrow\infty} \|a_n\|_{2,X}$. 
The standard diagonal arguments show that the $\|\cdot\|$-closed unit ball of $\completion{A}{X}$ is $\|\cdot\|_{2,X}$-complete and that the $\|\cdot\|$-closed unit ball of $A$ is $\|\cdot\|_{2,X}$-dense in the $\|\cdot\|$-closed unit ball of $\completion{A}{X}$; see for example \cite[Proposition 3.23]{TraciallyComplete}.

Every $\tau \in X$ has a unique $\|\cdot\|_{2,X}$-continuous extension given by  $(a_n)_{n=1}^\infty \mapsto \lim_{n\rightarrow\infty} \tau(a_n)$. Hence, we identify $X$ with a subset of $T(\completion{A}{X})$. The trace problem asks whether equality holds (for $X = T(A)$). Theorem \ref{thm:main} answers this question positively in the case that $A$ is $\Z$-stable.

\subsection{Tracially complete \texorpdfstring{C$^*$}{C*}-algebras}\label{subsec:TC-algebras}

In this paper, we work in the general framework of \emph{tracially complete C$^*$-algebras}, introduced in \cite{TraciallyComplete} and further investigated in \cite{TracialTransfer}. This framework includes as special cases both the uniform tracial completions of C$^*$-algebras discussed above and Ozawa's W$^*$-bundles, introduced in \cite{Oz13}.

\begin{definition}\label{def:TC}
	A \emph{tracially complete C$^*$-algebra} is a pair $(\M,X)$ where $\M$ is a unital C$^*$-algebra and $X \subseteq T(\M)$ is a compact convex set such that
	\begin{enumerate}
		\item $\|\cdot\|_{2,X}$ is a norm on $\M$, and
		\item the $\|\cdot\|$-closed unit ball of $\M$ is $\|\cdot\|_{2,X}$-complete.
	\end{enumerate}
\end{definition}

A tracially complete C$^*$-algebra $(\M, X)$ is said to be \emph{factorial} if $X$ is a face in $T(\M)$. 
In this case, $X$ will be a Choquet simplex, since $T(\M)$ is a Choquet simplex.
Examples of factorial tracially complete C$^*$-algebras include uniform tracial completions $(\completion{A}{X}, X)$, when $X \subseteq T(A)$ is a compact face, and W$^*$-bundles with factorial fibres.
A tracially complete C$^*$-algebras $(\M, X)$ is said to be \emph{type \rm{II}$_1$} if $\pi_\tau(\M)''$ is a type II$_1$ von Neumann algebra for all $\tau \in X$.
For further information, see \cite[Section 3]{TraciallyComplete}.

Given a free ultrafilter $\omega \in \beta\N \setminus \N$, the \emph{ultrapower} $(\M^\omega,X^\omega)$ of the tracially complete C$^*$-algebra $(\M,X)$ is defined as follows. First, we set
\begin{equation}
    \M^\omega = \frac{\ell^\infty(\M)}{\{(a_n)_{n=1}^\infty \in \ell^\infty(A): \lim_{n\to\omega}\|a_n\|_{2,X} = 0\}}.
\end{equation}
Then, for every sequence of traces $(\tau_n)_{n=1}^\infty$ in $X$, we define a \emph{limit trace} on $\M^\omega$ via $(a_n)_{n=1}^\infty \mapsto \lim_{n\to\omega} \tau_n(a_n)$, and we set $X^\omega \subseteq T(\M)$ to be the weak$^*$-closure of the set of all limit traces. For further information, see \cite[Section 5.1]{TraciallyComplete}.  

The key technical machinery used in this paper is complemented partitions of unity (CPoU). Informally, this allows us to prove results about a tracially complete C$^*$-algebra $(\M,X)$ by gluing together results that are known to hold in the finite von Neumann algebras $\pi_\tau(\M)''$ for $\tau \in X$. Complemented partitions of unity were first introduced in \cite{CETWW}. The definition below is taken from \cite{TraciallyComplete}.

\begin{definition}[{\cite[Definition~6.1]{TraciallyComplete}}]\label{def:CPoU}
	Let $(\M,X)$ be a factorial tracially complete C$^*$-algebra. We say that $(\M,X)$ has \emph{complemented partitions of unity} (CPoU) if for any $\|\cdot\|_{2,X}$-separable subset $S \subseteq \M$, any family $a_1,\dots,a_k$ of positive elements in $\M$, and any scalar
	\begin{equation}\label{eq:CPoUTraceIneq1}
		\delta>\sup_{\tau \in X} \min_{1 \leq i \leq k}\tau(a_i),
	\end{equation}
	there exist orthogonal projections $q_1,\dots,q_k\in \M^\omega\cap S'$ summing to $1_{\M^\omega}$ such that
	\begin{equation}\label{eq:CPoUTraceIneq2}
		\tau(a_iq_i)\leq \delta\tau(q_i)
	\end{equation}
        for all $\tau\in X^\omega$ and $i=1,\dots,k$.
\end{definition}

The uniform tracial completion of a $\Z$-stable C$^*$-algebra has CPoU by \cite[Theorem 1.4]{TraciallyComplete}. For separable nuclear $\Z$-stable C$^*$-algebras, this result was originally shown in \cite[Theorem I]{CETWW}.

\subsection{Finite von Neumann algebras}\label{subsec:finite-vna}

In this subsection, we record a couple of technical lemmas about finite von Neumann algebras. These lemmas are the fibrewise results that we will glue together using CPoU in order to establish our main theorems. 

The first lemma is 2-norm stability of projections. A proof of this lemma can be found in \cite{Tak03}.
\begin{lemma}[{\cite[Chapter XIV, Lemma 2.2]{Tak03}}]\label{lem:stability}
Let $M$ be von Neumann algebra and $\tau \in T(M)$ normal. Let $0 < \epsilon < \tfrac{1}{4}$. Suppose $e \in M_{+,1}$ satisfies 
\begin{equation}
     \|e^2-e\|_{2,\tau} < \eps.
\end{equation}
Then $p = \chi_{[\sqrt{\eps}, 1]}(e)$ is a projection and $\|e-p\|_{2,\tau} < 2\sqrt{\eps}$.
\end{lemma}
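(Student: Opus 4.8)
The plan is to realise $p$ as a spectral projection of $e$ and then to control $\norm{e-p}_{2,\tau}$ by $\norm{e^2-e}_{2,\tau}$ through a single pointwise functional calculus inequality.

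First I would set up the functional calculus. Since $e \in M_{+,1}$ the spectrum of $e$ lies in $[0,1]$, and since $M$ is a von Neumann algebra the Borel functional calculus of $e$ takes values in $M$; in particular $p = \chi_{[\sqrt\eps,1]}(e)$ is a (self-adjoint) projection in $M$. Introduce the real Borel function $h(t) = t - \chi_{[\sqrt\eps,1]}(t)$ and the continuous function $g(t) = t - t^2 = t(1-t)$, which is non-negative on $[0,1]$. Then $e - p = h(e)$ and $e - e^2 = g(e)$ are self-adjoint, so that, since $\tau$ is a trace,
\[ \norm{e-p}_{2,\tau}^2 = \tau\big(h(e)^2\big) \qquad \text{and} \qquad \norm{e^2-e}_{2,\tau}^2 = \tau\big(g(e)^2\big). \]

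The heart of the matter is the pointwise estimate: for $0 < \eps \leq \tfrac{1}{4}$ and every $t \in [0,1]$,
\[ \eps\, h(t)^2 \leq g(t)^2 = t^2(1-t)^2. \]
I would prove this by splitting the interval at the cutoff $\sqrt\eps$. If $\sqrt\eps \leq t \leq 1$, then $h(t) = t-1$ and $t^2 \geq \eps$, so $g(t)^2 = t^2(1-t)^2 \geq \eps (1-t)^2 = \eps\, h(t)^2$. If $0 \leq t < \sqrt\eps$, then $h(t) = t$; here the hypothesis $\eps \leq \tfrac14$ is used, as it is equivalent to $\sqrt\eps \leq 1 - \sqrt\eps$, and hence $t < \sqrt\eps$ forces $1 - t > \sqrt\eps$, giving $g(t)^2 = t^2(1-t)^2 \geq \eps\, t^2 = \eps\, h(t)^2$. (The estimate is an equality at $t = \sqrt\eps$, which is precisely why the cutoff defining $p$ is placed there; the strict inequality $\eps < \tfrac14$ of the lemma leaves room to spare.)

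Finally I would transport this inequality through the functional calculus. Since $\eps\, h(t)^2 \leq g(t)^2$ on the spectrum of $e$, positivity of the Borel functional calculus gives $\eps\, h(e)^2 \leq g(e)^2$, and applying the positive trace $\tau$ yields
\[ \eps\,\norm{e-p}_{2,\tau}^2 = \eps\,\tau\big(h(e)^2\big) \leq \tau\big(g(e)^2\big) = \norm{e^2-e}_{2,\tau}^2 < \eps^2, \]
so $\norm{e-p}_{2,\tau} < \sqrt\eps < 2\sqrt\eps$ --- in fact slightly stronger than claimed. I do not expect a real obstacle here: this is a routine functional calculus estimate, and the only genuine decisions are to compare $(e-p)^2$ with the \emph{square} $(e^2-e)^2$ of the defect (comparing instead with $e^2-e$ only yields the far weaker $\norm{e-p}_{2,\tau} = O(\eps^{1/4})$) and, correspondingly, to place the spectral cutoff at $\sqrt\eps$.
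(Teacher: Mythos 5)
Your proof is correct. Note that the paper does not prove this lemma at all --- it simply cites \cite[Chapter XIV, Lemma 2.2]{Tak03} --- so there is no in-paper argument to compare against; what you have written is a clean, self-contained replacement. Your case analysis of the pointwise inequality $\eps\,h(t)^2 \leq t^2(1-t)^2$ on $[0,1]$ is right in both cases (on $[\sqrt\eps,1]$ one uses $t^2\geq\eps$; on $[0,\sqrt\eps)$ one uses $\eps\leq\tfrac14 \Leftrightarrow \sqrt\eps\leq 1-\sqrt\eps$ to get $(1-t)^2>\eps$), and transporting it through the Borel functional calculus and applying $\tau$ is legitimate. Two small observations. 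First, the more common textbook route is to split the trace of the spectral measure over $[0,\sqrt\eps)$ and $[\sqrt\eps,1]$ separately, bounding the first piece crudely by $\eps\cdot\mu([0,\sqrt\eps))\leq\eps$ and the second by $\eps^{-1}\|e^2-e\|_{2,\tau}^2<\eps$; that yields $\|e-p\|_{2,\tau}<\sqrt{2\eps}$, which is where the stated constant $2\sqrt\eps$ comes from. Your single pointwise inequality absorbs both cases at once and gives the sharper bound $\sqrt\eps$. Second, your argument never actually uses normality of $\tau$, nor even the trace property (both $e-p$ and $e^2-e$ are self-adjoint functions of $e$, so only positivity of $\tau$ is invoked); the normality hypothesis in the statement matters only for the remark following the lemma in the paper, where one passes from normal traces to all traces by weak$^*$ density, and your proof in fact makes that extension step unnecessary.
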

Note that the projection $p$ constructed in Lemma~\ref{lem:stability} does not depend on the trace $\tau$, so $\|\cdot\|_{2,T(M)}$-stability of projections in a finite von Neumann algebra $M$ is an immediate consequence of Lemma~\ref{lem:stability} and the fact that the normal traces are weak$^*$ dense in the trace space.

The second lemma constructs a projection $p$ with specified tracial behaviour that is as close as possible to a given projection $q_0$.
This is a standard application of basic properties of type II$_1$ von Neumann algebras, and is likely known to experts, but we supply a proof for completeness.  

\begin{lemma}\label{lem:closest}
Let $M$ be a type II$_1$ von Neumann algebra. Let $q_0, q_1 \in M$ be projections with $q_0 \leq q_1$.
Let $f:T(M) \rightarrow [0,1]$ be a continuous affine function such that $f(\tau) \leq \tau(q_1)$ for all $\tau \in T(M)$. 

There exists a projection $p \in M$ such that 
\begin{enumerate}
\item $p \leq q_1$,
\item $\tau(p) = f(\tau)$ for all $\tau \in T(M)$,
\item $\|p-q_0\|^2_{2,T(M)} \leq \sup_{\tau \in T(M)} |\tau(q_0) - f(\tau)|$.
\end{enumerate}
\end{lemma}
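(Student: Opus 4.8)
The plan is to build $p$ by working within the von Neumann subalgebra $q_1 M q_1$, which is again type II$_1$, so without loss of generality I would first reduce to the case $q_1 = 1_M$, replacing $f$ and $q_0$ by their images under this cut-down (note $q_0 \le q_1$ guarantees $q_0 \in q_1 M q_1$, and the hypothesis $f(\tau) \le \tau(q_1)$ becomes $f \le 1$ on $T(q_1 M q_1)$ after the natural affine identification of trace spaces). The key structural input is that in a type II$_1$ von Neumann algebra, for any continuous affine $f: T(M) \to [0,1]$ there is a projection with trace exactly $f$: this is the fact that affine functions on $T(M)$ dominated by $1$ are realised by projections, which follows from the comparison theory of projections together with the fact that $T(M)$ is a Bauer (in the factor case, a point) or more generally Choquet simplex — one can cite or reprove this via halving arguments and a maximality/exhaustion argument. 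So the existence of \emph{some} projection with trace $f$ is not the issue; the content of the lemma is the estimate (3), i.e.\ choosing among all such projections one that is $\|\cdot\|_{2,T(M)}$-close to $q_0$.

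The main idea for (3): decompose $p$ and $q_0$ relative to each other. Given any projection $p$ with $\tau(p) = f(\tau)$, we have, for every $\tau \in T(M)$,
\begin{equation}
\|p - q_0\|_{2,\tau}^2 = \tau(p) + \tau(q_0) - 2\tau(p q_0) = \tau(p - p\wedge q_0) + \tau(q_0 - p \wedge q_0),
\end{equation}
where $p \wedge q_0$ is the infimum projection; here I have used that $\tau((p-q_0)^2) = \tau(p) + \tau(q_0) - \tau(pq_0) - \tau(q_0 p)$ and the trace identity $\tau(pq_0) = \tau(q_0 p q_0) = \tau(q_0) - \tau(q_0 - p\wedge q_0 \cdots)$ — more cleanly, $p - p\wedge q_0 \sim p\vee q_0 - q_0$ by Kaplansky's parallelogram law, so $\tau(p) - \tau(p\wedge q_0) = \tau(p\vee q_0) - \tau(q_0)$, and substituting gives $\|p-q_0\|_{2,\tau}^2 = \tau(p\vee q_0) + \tau(p\wedge q_0) - 2\tau(q_0) + 2(\tau(q_0) - \tau(p\wedge q_0))$; the bookkeeping needs care but the upshot I am aiming for is $\|p - q_0\|_{2,\tau}^2 = 2\big(\tau(q_0) - \tau(p \wedge q_0)\big)$ when $p \le q_0$ or $p \ge q_0$, and in general an expression controlled by $|\tau(p) - \tau(q_0)|$ plus a non-negative "overlap defect". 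To kill that defect, I would construct $p$ so that it is nested with $q_0$ wherever possible: on the part of $T(M)$ where $f \le \tau(q_0)$ take $p \le q_0$, on the part where $f \ge \tau(q_0)$ take $p \ge q_0$. Concretely, since $f \wedge (\text{the function }\tau \mapsto \tau(q_0))$ and $f \vee (\tau \mapsto \tau(q_0))$ are continuous affine with values between the relevant traces, I can first pick $p_- \le q_0$ with $\tau(p_-) = \min(f(\tau), \tau(q_0))$ and then enlarge within $1 - q_0$ to get $p$ with $\tau(p) = f(\tau)$ and $p_- \le p$, $p \wedge q_0 = p_-$, $p\vee q_0 = q_0 + (p - p_-)$. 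For such $p$ one computes directly $\|p - q_0\|_{2,\tau}^2 = \tau(q_0 - p_-) + \tau(p - p_-) = (\tau(q_0) - \min(f,\tau(q_0))) + (f(\tau) - \min(f,\tau(q_0))) = |\tau(q_0) - f(\tau)|$, which is exactly (3).

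The step I expect to be the main obstacle is establishing, inside a general type II$_1$ von Neumann algebra (not just a factor), the simultaneous existence of a projection $p_- \le q_0$ with $\tau(p_-) = \min(f(\tau), \tau(q_0))$ \emph{and} a projection $p \ge p_-$ with $p \wedge q_0 = p_-$ realising the prescribed continuous affine trace $f$ — i.e.\ prescribing traces of projections while controlling their lattice position relative to $q_0$. The realisation of a continuous affine function as the trace of a projection in a type II$_1$ algebra is where the type II$_1$ hypothesis and the structure of the centre really enter; I would handle it by cutting by central projections to reduce to the factor (or countably-decomposable-with-diffuse-centre) case where it is classical, or by invoking the known description of the range of $\tau \mapsto \tau(p)$ over projections $p$ in a hereditary subalgebra, and then assembling over the centre using that $T(M)$ is a Choquet simplex and the functions involved are continuous and affine. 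The relative-position refinement $p \wedge q_0 = p_-$ is then arranged by performing the whole construction inside $(1-q_0)M(1-q_0) \oplus q_0 M q_0$ appropriately: build $p_-$ in $q_0 M q_0$ and build $p - p_-$ in $(q_0 - p_-)M(q_0-p_-) \oplus (1-q_0)M(1-q_0)$ with support arranged to avoid $q_0$ beyond $p_-$, which is possible since $f(\tau) - \min(f(\tau),\tau(q_0)) \le 1 - \min(f(\tau),\tau(q_0)) = \tau\big((q_0 - p_-) + (1 - q_0)\big)$. Once these projections are produced, (1) and (2) are immediate and (3) is the short computation above.
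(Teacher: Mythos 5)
There is a genuine gap, and it sits exactly where you predicted the "main obstacle" would be, but it is worse than a technical hurdle: the step is false. You assert that $f\wedge(\tau\mapsto\tau(q_0))$ and $f\vee(\tau\mapsto\tau(q_0))$ are continuous \emph{affine}, but the pointwise minimum of two affine functions is concave, not affine, and $\tau\mapsto\tau(p_-)$ is always affine for a projection $p_-$. So the projection $p_-\leq q_0$ with $\tau(p_-)=\min(f(\tau),\tau(q_0))$ for \emph{all} $\tau\in T(M)$ simply need not exist. Moreover, the pointwise identity you derive from it, $\|p-q_0\|_{2,\tau}^2=|\tau(q_0)-f(\tau)|$, is unattainable in general: take $M=M_1\oplus M_2$ with $M_i$ II$_1$ factors with traces $\tau_1,\tau_2$, let $q_1=1$, $\tau_i(q_0)=\tfrac12$, $f(\tau_1)=\tfrac14$, $f(\tau_2)=\tfrac34$. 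At $\tau=\tfrac12(\tau_1+\tau_2)$ one has $|\tau(q_0)-f(\tau)|=0$, so pointwise equality would force $p=q_0$ (this $\tau$ is faithful), contradicting $\tau_1(p)=f(\tau_1)\neq\tau_1(q_0)$. Note that the lemma only claims a bound on the \emph{supremum} of $\|p-q_0\|_{2,\tau}^2$ by the supremum of $|\tau(q_0)-f(\tau)|$; the pointwise statement you are aiming for is strictly stronger and false. (A secondary issue: the opening reduction to $q_1=1_M$ is not as clean as stated, since $T(q_1Mq_1)$ is not affinely identified with $T(M)$ and $f$ does not obviously descend; the paper works in $M$ throughout and only realises partial isometries inside $q_1Mq_1$.)

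Your underlying idea --- arrange $p$ to be nested with $q_0$, below it where $f$ is small and above it where $f$ is large --- is the right one and is essentially what the paper does; the difference is that the paper performs the case split with a \emph{central projection} rather than by splitting the trace simplex. Concretely: first realise $f$ by a projection $p'\leq q_1$, then apply generalised comparison of projections to get a central $z$ with $zp'\lesssim zq_0$ and $(1-z)q_0\lesssim(1-z)p'$, and replace $p'$ by $p=p_z+p_{1-z}$ with $p_z\leq zq_0$ and $p_{1-z}\geq(1-z)q_0$ in the respective summands (realising the subequivalences inside $q_1Mq_1$ so that $p\leq q_1$). On each central summand $p$ and $q_0$ are comparable, so $|p-q_0|^2=z(q_0-p_z)+(1-z)(p_{1-z}-q_0)$ and the trace of each piece is $|\tau(q_0)-f(\tau)|\leq\alpha:=\sup_\sigma|\sigma(q_0)-f(\sigma)|$ for traces supported on that summand; a general trace is a convex combination of traces supported on $zM$ and $(1-z)M$, giving $\|p-q_0\|_{2,\tau}^2\leq\alpha$. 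If you want to repair your write-up along your own lines, replace the (non-affine) function $\min(f,\tau(q_0))$ by this central decomposition and downgrade the final display to the sup bound.
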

\begin{proof}
    As $M$ is type II$_1$, there is a projection $p'' \in M$ with $\tau(p'') = f(\tau)$ for all $\tau \in T(M)$; see for example \cite[Proposition 2.8(iv)]{TraciallyComplete}.
    Since $f(\tau) \leq \tau(q_1)$ for all $\tau \in T(M)$, there exists a projection $p' \in M$ that is Murray--von Neumann equivalent to $p''$ and such that
    $p' \leq q_1$. 
    
    By generalised comparison of projections in a von Neumann algebra (see for example \cite[Proposition III.1.1.10]{Bl06}), there is a central projection $z \in Z(M)$ such that $zp' \lesssim zq_0$ and $(1-z)q_0 \lesssim (1-z)p'$. 
    Moreover, since $p', q_0 \leq q_1$, these Murray--von Neumann subequivalences can be realised by partial isometries in $q_1Mq_1$.
    Hence, there exists a projection $p_z \in zM$ that is Murray--von Neumann equivalent to $zp'$ and satisfies $p_z \leq zq_0 \leq zq_1$.
    Similarly, there exists a projection $p_{1-z} \in (1-z)M$ that is Murray--von Neumann equivalent to $(1-z)p'$ and that satisfies $(1-z)q_0 \leq p_{1-z} \leq (1-z)q_1$.

    As $z$ is a central projection, $p_z$ and $p_{1-z}$ are orthogonal projections.
    Therefore, the sum $p = p_z + p_{1-z}$ is a projection in $M$. As $p_z, p_{1-z} \leq q_1$, we have $p \leq q_1$.
    Since $z(q_0 - p_z)$ and $(1-z)(p_{1-z} - q_0)$ are orthogonal projections, we have 
    \begin{equation}\label{eqn:abs-squared}
    |p-q_0|^2 = z(q_0 - p_z) + (1-z)(p_{1-z} - q_0).
    \end{equation}
     
    Let $\alpha = \sup_{\tau \in T(M)} |\tau(q_0) - f(\tau)|$. 
    Since $M = zM \oplus (1-z) M$, every trace on $M$ is a convex combination of a trace supported on $zM$ and a trace support on $(1-z)M$. Hence, it suffices to show that $\|p-q_0\|_{2,\tau}^2 \leq \alpha$ for all traces supported on $zM$ and for all traces supported on $(1-z) M$.

    Suppose $\tau \in T(M)$ is supported on $zM$. Then by \eqref{eqn:abs-squared}, we have
    \begin{equation}
         \|p-q_0\|_{2,\tau}^2 = \tau(|p-q_0|^2) = \tau(q_0 - p_z) = \tau(q_0) - \tau(p_z).
    \end{equation}
    Since $p_z$ is equivalent to $zp'$ and $\tau$ is supported on $zM$, we have $\tau(p_z) = \tau(zp') = \tau(p') = f(\tau)$. Hence, $\|p-q_0\|_{2,\tau}^2 \leq \alpha$.
    
    Suppose $\tau \in T(M)$ is supported on $(1-z)M$. Then 
    \begin{equation}
         \|p-q_0\|_{2,\tau}^2 = \tau(p_{1-z}) - \tau(q_0).
    \end{equation} 
    by \eqref{eqn:abs-squared}.
    Since $p_{1-z}$ is equivalent to $(1-z)p'$ and $\tau$ is supported on $(1-z)M$, we have $\tau(p_{1-z}) = \tau((1-z)p') = \tau(p') = f(\tau)$. Hence, $\|p-q_0\|_{2,\tau}^2 \leq \alpha$.
    This completes the proof.
\end{proof}

Combining the previous two results, we get the following proposition.
\begin{proposition}\label{prop:close-projection}
Let $M$ be a type $II_1$ von Neumann algebra. Let $q_1 \in M$ be a projection. Let $f:T(M) \rightarrow [0,1]$ be a continuous affine function such that $f(\tau) \leq \tau(q_1)$ for all $\tau \in T(M)$. Let $0 < \eps < \tfrac{1}{4}$. Let $e \in q_1Mq_1$ be a positive contraction. Suppose 
\begin{align}
	\|e^2-e\|_{2,T(M)} &< \eps\\
	\sup_{\tau \in T(M)}|\tau(e)-f(\tau)| &< \eps. 
\end{align}
Then there exists a projection $p \in q_1Mq_1$ with $\|e-p\|_{2,T(M)} < 4\eps^{1/4}$ and $\tau(p) = f(\tau)$ for all $\tau \in T(M)$.
\end{proposition}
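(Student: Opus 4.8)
The plan is to obtain the projection $p$ by concatenating the two fibrewise lemmas already recorded in this subsection: Lemma~\ref{lem:stability} converts the approximate projection $e$ into a genuine projection $q_0$ that is $\|\cdot\|_{2,T(M)}$-close to $e$ and sits under $q_1$, and Lemma~\ref{lem:closest} then replaces $q_0$ by a projection $p \leq q_1$ with \emph{exactly} the prescribed trace $f$, while retaining control of $\|p - q_0\|_{2,T(M)}$. A single triangle inequality together with the bookkeeping afforded by $0 < \epsilon < \tfrac14$ will then deliver the stated bound $4\epsilon^{1/4}$.

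In detail, I would first put $q_0 = \chi_{[\sqrt{\epsilon},1]}(e)$. Since $e \in q_1Mq_1$ is a positive contraction, it is a positive contraction in $M$ and $\|e^2 - e\|_{2,\tau} \leq \|e^2 - e\|_{2,T(M)} < \epsilon$ for every $\tau \in T(M)$; applying Lemma~\ref{lem:stability} to each normal trace shows $q_0$ is a projection with $\|e - q_0\|_{2,\tau} < 2\sqrt{\epsilon}$ for all normal $\tau$. As $q_0$ does not depend on $\tau$ and $\tau \mapsto \tau((e-q_0)^2)$ is weak$^*$-continuous while the normal traces are weak$^*$-dense in $T(M)$, this upgrades to $\|e - q_0\|_{2,T(M)} \leq 2\sqrt{\epsilon}$. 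Moreover $e \leq q_1$ forces the spectral projections of $e$ over Borel sets avoiding $0$ to lie under $q_1$, so $q_0 \leq q_1$ and in particular $q_0 \in q_1Mq_1$.

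Next I would estimate $\sup_{\tau \in T(M)}|\tau(q_0) - f(\tau)|$: for each $\tau$, $|\tau(q_0) - \tau(e)| \leq \|q_0 - e\|_{2,\tau} \leq 2\sqrt{\epsilon}$ and $|\tau(e) - f(\tau)| < \epsilon$ by hypothesis, so this supremum is at most $2\sqrt{\epsilon} + \epsilon$. Now $q_0 \leq q_1$ and $f(\tau) \leq \tau(q_1)$ are exactly the hypotheses of Lemma~\ref{lem:closest}, which produces a projection $p \in M$ with $p \leq q_1$ (hence $p \in q_1Mq_1$), with $\tau(p) = f(\tau)$ for all $\tau \in T(M)$, and with $\|p - q_0\|_{2,T(M)}^2 \leq \sup_{\tau}|\tau(q_0) - f(\tau)| \leq 2\sqrt{\epsilon} + \epsilon$. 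Writing $t = \epsilon^{1/4} < 2^{-1/2}$, so that $t^2 < \tfrac12$, I would then bound $\|p - q_0\|_{2,T(M)} \leq t\sqrt{2 + t^2} < \sqrt{5/2}\,t$ and $\|e - q_0\|_{2,T(M)} \leq 2t^2 < \sqrt{2}\,t$, whence the triangle inequality gives $\|e - p\|_{2,T(M)} < (\sqrt{2} + \sqrt{5/2})\,t < 4\epsilon^{1/4}$, as required.

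There is no deep obstacle here; the proposition is essentially the composition of Lemmas~\ref{lem:stability} and~\ref{lem:closest}. The points that need care are (i) passing from normal traces to all of $T(M)$ in the stability step via weak$^*$-density and weak$^*$-continuity of $\tau \mapsto \tau((e-q_0)^2)$; (ii) checking $q_0 \leq q_1$, which is needed both for $q_0 \in q_1Mq_1$ and to satisfy the hypotheses of Lemma~\ref{lem:closest}; and (iii) arranging the numerical constants so that the final estimate really lands below $4\epsilon^{1/4}$ rather than a larger multiple — this is precisely where the restriction $\epsilon < \tfrac14$ is used.
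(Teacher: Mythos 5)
Your proof is correct and follows essentially the same route as the paper's: apply Lemma~\ref{lem:stability} (via weak$^*$-density of normal traces) to get $q_0=\chi_{[\sqrt{\eps},1]}(e)\in q_1Mq_1$, then Lemma~\ref{lem:closest} to replace $q_0$ by a projection with exact trace $f$, and finish with the triangle inequality; your numerical bookkeeping $(\sqrt{2}+\sqrt{5/2})\eps^{1/4}<4\eps^{1/4}$ matches the paper's estimate $\sqrt{\eps+2\sqrt{\eps}}+2\sqrt{\eps}<4\eps^{1/4}$. The extra care you take with $q_0\leq q_1$ and the normal-to-general-trace upgrade is exactly what the paper relies on as well.
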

\begin{proof}
The normal traces are weak$^*$ dense in $T(M)$; see for example \cite[Proposition 2.8(ii)]{TraciallyComplete}. Hence, the projection $q_0 = \chi_{[\sqrt{\eps}, 1]}(e) \in q_1Mq_1$ satisfies $\|e - q_0\|_{2,T(M)} \leq 2\sqrt{\eps}$ by Lemma \ref{lem:stability}.
By the triangle inequality, we have that $\sup_{\tau \in T(M)} |\tau(q_0)-f(\tau)| < \eps + 2\sqrt{\eps}$.
By Lemma \ref{lem:closest}, there exists a projection $p \in q_1Mq_1$ with $\tau(p) = f(\tau)$ for all $\tau \in T(M)$ such that
$\|p-q_0\|_{2,T(M)}^2 < \eps + 2\sqrt{\eps}$.
By the triangle inequality, we have
\begin{equation}
\begin{split}
	\|p-e\|_{2,T(M)} &\leq \|p-q_0\|_{2,T(M)}  + \|e-q_0\|_{2,T(M)}\\
	&< \sqrt{\eps + 2\sqrt{\eps}} +2\sqrt{\eps} \\
	&< 4\eps^{1/4},
\end{split}
\end{equation}
as $0 < \eps < \tfrac{1}{4}$. 
\end{proof}

\begin{remark}
    Proposition~\ref{prop:close-projection} is still true when $q_1 = 0$. The function $f$ is forced to be identically zero and $p$ is the zero projection. This degenerate case can occur in the main complemented partition of unity argument (Lemma~\ref{lem:onestep}).
\end{remark}

\subsection{Strict comparison}\label{subsec:strict-comparison}

In this subsection, we record the results on strict comparison that we will need in this paper. Our main references are \cite{ERS11} and \cite{NgR16}.

For a C$^*$-algebra $A$, we write $\QT(A)$ for the cone of all lower semi-continuous 2-quasitraces $\tau:A_+ \rightarrow [0,\infty]$. Every lower semi-continuous 2-quasitrace on $A$ extends uniquely to $A \otimes \K$. In this paper, we shall abbreviate lower semi-continuous 2-quasitrace to \emph{quasitrace}.  
The topology on $\QT(A)$ is specified by defining the convergent nets. 
A net $(\tau_\lambda)$ in $\QT(A)$ converses to $\tau$ if and only if 
\begin{equation}
	\limsup_\lambda \tau_\lambda((a-\eps)_+) \leq \tau(a) \leq \liminf_\lambda \tau_\lambda(a)
\end{equation}
for all $a \in (A \otimes \K)_+$ and $\eps > 0$.
Endowed with this topology, $\QT(A)$ is a compact Hausdorff cone; see \cite[Section 4.1]{ERS11}. 
The subspace topology on $T(A)$ inherited from $\QT(A)$ and the subspace topology on $T(A)$ induced by the weak$^*$-topology on $A^*$ coincide; see \cite[Proposition 3.10]{ERS11}.\footnote{We warn the reader that \cite{ERS11} uses a different notational convention: $T(A)$ denotes the cone of extended traces and the bounded traces are $T_I(A)$ where $I=A$.} 
In particular, $T(A)$ is a compact subset of $\QT(A)$ when $A$ is unital. 

For every $\tau \in \QT(A)$, we write $d_\tau:(A\otimes\K)_+ \rightarrow [0,\infty]$ for the \emph{rank function} given by $a \mapsto \lim_{n\to\infty} \tau(a^{1/n})$. 
For $a,b \in (A\otimes\K)_+$, we write $a \precsim b$ for \emph{Cuntz subequivalence}, i.e.\ if there exists a sequence $(r_n)_{n=1}^\infty$ in  $A\otimes\K$ such that $\lim_{n\to\infty} r_nbr_n^* = a$. 
It is standard that $a \precsim b$ implies $d_\tau(a) \leq d_\tau(b)$ for all $\tau \in \QT(A)$. Loosely speaking, an algebra has \emph{strict comparison} when a partial converse to this implication holds.

In this paper, we shall use the following notion of strict comparison with respect to a compact subset of $\QT(A)$ due to Ng--Robert. The case $K = \QT(A)$ corresponds to strict comparison.
\begin{definition}[{\cite[Definition 3.1]{NgR16}}]\label{def:strict-comparison-K}
	Let $A$ be a C$^*$-algebra and $K \subseteq \mathrm{QT}_c(A)$ be a compact subset. We say that $A$ has \emph{strict comparison with respect to $K$} if, for any $a,b \in (A \otimes \K)_+$, we have $a \precsim b$ whenever there exists $\gamma > 0$ such that, for all $\tau \in K$,  $d_\tau(a) \leq (1-\gamma)d_\tau(b)$.
\end{definition}

Ng and Robert show that having strict comparison with respect to a compact subset of $\QT(A)$ imposes quite severe restrictions on the remaining quasitraces. We restate their lemma for the benefit of the reader.
\begin{lemma}[{\cite[Lemma 3.4]{NgR16}}] \label{lem:K-to-all}
	Let $A$ be a C$^*$-algebra that has strict comparison with respect to the compact set $K \subseteq \mathrm{QT}_c(A)$. Let $a,b \in (A \otimes \K)_+$.
\begin{enumerate}
	\item If $d_\tau(a) \leq d_\tau(b)$ for all $\tau \in K$, then $d_\tau(a) \leq d_\tau(b)$ for all $\tau \in \QT(A)$.
	\item If $\tau(a) \leq \tau(b)$ for all $\tau \in K$, then $\tau(a) \leq \tau(b)$ for all $\tau \in \QT(A)$.
\end{enumerate}
\end{lemma}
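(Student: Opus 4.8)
The plan is to derive both statements from Definition~\ref{def:strict-comparison-K} by a direct-sum trick that turns a non-strict comparison over $K$ into a strict one. Throughout I identify $M_n(A\otimes\K)$ with $A\otimes\K$ in the standard way, so that for $c\in(A\otimes\K)_+$ the $n$-fold orthogonal direct sum $c^{\oplus n}$ lies in $A\otimes\K$ with $d_\tau(c^{\oplus n})=n\,d_\tau(c)$ for all $\tau\in\QT(A)$, and likewise Cuntz subequivalence of such sums makes sense. For part~(1), assume $d_\tau(a)\le d_\tau(b)$ for all $\tau\in K$ and fix $n\in\N$. Then for every $\tau\in K$,
\begin{equation}
d_\tau\big(a^{\oplus n}\big)=n\,d_\tau(a)\le n\,d_\tau(b)=\Big(1-\tfrac{1}{n+1}\Big)\,d_\tau\big(b^{\oplus(n+1)}\big),
\end{equation}
so strict comparison with respect to $K$, applied with $\gamma=\tfrac{1}{n+1}>0$, gives $a^{\oplus n}\precsim b^{\oplus(n+1)}$. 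Evaluating $d_\sigma$ for an arbitrary $\sigma\in\QT(A)$ yields $n\,d_\sigma(a)\le(n+1)\,d_\sigma(b)$, and letting $n\to\infty$ gives $d_\sigma(a)\le d_\sigma(b)$ (there is nothing to prove when $d_\sigma(b)=\infty$, and otherwise $\tfrac{n+1}{n}d_\sigma(b)\to d_\sigma(b)$). This is part~(1).

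For part~(2) I would reduce to part~(1) by realising trace values as limits of normalised rank functions. After rescaling we may assume $\|a\|,\|b\|\le 1$; for $k\in\N$ set
\begin{equation}
A_k:=\bigoplus_{j=1}^{2^k}\big(a-j2^{-k}\big)_+,\qquad B_k:=\bigoplus_{j=1}^{2^k-1}\big(b-j2^{-k}\big)_+\ \in(A\otimes\K)_+.
\end{equation}
The layer-cake identity $\tau(x)=\int_0^1 d_\tau\big((x-t)_+\big)\,dt$ together with the monotonicity of $t\mapsto d_\tau((x-t)_+)$ give, by standard Riemann-sum estimates and for every $\tau\in\QT(A)$, that $2^{-k}d_\tau(A_k)\le\tau(a)$ with $2^{-k}d_\tau(A_k)\nearrow\tau(a)$, that $\tau(b)\le 2^{-k}d_\tau(B_k)+\tau\big(b\wedge 2^{-k}\big)$, and that $\tau(b\wedge 2^{-k})\le 2^{-k}d_\tau(b)\le 2^{-k}\,\tau(1_A)$. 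Combining these with the hypothesis $\tau(a)\le\tau(b)$ on $K$ gives $2^{-k}d_\tau(A_k)\le 2^{-k}d_\tau\big(B_k\oplus 1_A\big)$, i.e.\ $d_\tau(A_k)\le d_\tau(B_k\oplus 1_A)$, for all $\tau\in K$. Part~(1) upgrades this to $d_\sigma(A_k)\le d_\sigma(B_k\oplus 1_A)$ for all $\sigma\in\QT(A)$. Putting $\sigma=\tau_0$, dividing by $2^k$ and letting $k\to\infty$ — using $2^{-k}d_{\tau_0}(B_k)\to\tau_0(b)$ (again a Riemann-sum limit) and $2^{-k}d_{\tau_0}(1_A)=2^{-k}\tau_0(1)\to 0$ — yields $\tau_0(a)\le\tau_0(b)$. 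This establishes part~(2) for every bounded $\tau_0$, in particular for every $\tau_0\in T(A)$, which is the case relevant to the trace problem.

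The step I expect to be the main obstacle is controlling the small-spectrum tail $\tau(b\wedge 2^{-k})$ uniformly enough to cover \emph{all} quasitraces: the bound above needs $d_{\tau_0}(1_A)=\tau_0(1)<\infty$, so it does not directly handle an unbounded $\tau_0$ (for which $d_{\tau_0}(b)$ may be infinite even though $\tau_0(b)$ is finite). One clean way around this is to realise $\tau(a)$ itself as a genuine rank function: in the cone $A\otimes C_0((0,1])$ the element $\hat a$ with $\hat a(t)=(a-(1-t))_+$ satisfies $d_{\tau\otimes\lambda}(\hat a)=\int_0^1 d_\tau\big((a-s)_+\big)\,ds=\tau(a)$, where $\lambda$ is Lebesgue measure on $(0,1]$, so the hypothesis $\tau(a)\le\tau(b)$ on $K$ turns into a rank inequality on $\{\tau\otimes\lambda:\tau\in K\}$ to which part~(1) applies — provided one first checks that strict comparison with respect to $K$ is inherited by this cone. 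Verifying that inheritance (or otherwise bounding $\tau(b\wedge 2^{-k})$ by a fixed rank function over all of $\QT(A)$) is the crux of the general statement.
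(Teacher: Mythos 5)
The paper does not prove this lemma at all: it is restated verbatim from \cite[Lemma 3.4]{NgR16} ``for the benefit of the reader'', so there is no internal argument to compare yours against. Judged on its own, your part~(1) is correct and complete: the comparison of $a^{\oplus n}$ with $b^{\oplus(n+1)}$ using the uniform constant $\gamma=\tfrac{1}{n+1}$ is exactly the right way to trade the strictness for a limit, and it handles infinite rank values correctly.

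Part~(2) has a genuine gap beyond the one you flag at the end. The chain $\tau(b\wedge 2^{-k})\le 2^{-k}d_\tau(b)\le 2^{-k}\tau(1_A)$ already fails for $\tau\in K$: for $b\in(A\otimes\K)_+$ the rank $d_\tau(b)$ is not dominated by $\tau(1_A)$ (take $b=1_A\oplus 1_A$), and it can be infinite even when $\tau(b)$ is finite (take $b=\bigoplus_n c_n p$ with $p$ a projection of trace one and $(c_n)$ summable). In that second example the tail $\tau(b\wedge 2^{-k})=\sum_n\min(c_n,2^{-k})$ goes to zero at an arbitrarily slow rate compared with $2^{-k}$, so no fixed unit $1_{M_N(A)}$ --- let alone $1_A$, which need not exist since $A$ is not assumed unital in the statement --- can absorb the error term uniformly in $k$; and you cannot simply reduce to $b\in M_N(A)_+$ because compressing $b$ to a corner destroys the hypothesis $\tau(a)\le\tau(b)$ on $K$. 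The second gap is the one you name yourself: the conclusion is asserted for every $\tau\in\QT(A)$, including unbounded lower semicontinuous quasitraces, whereas your limiting argument needs $\tau_0(1)<\infty$. Your proposed repair via the element $\hat a\in A\otimes C_0((0,1])$ with $d_{\tau\otimes\lambda}(\hat a)=\tau(a)$ is the right idea (and close in spirit to what Ng and Robert actually do), but the inheritance of strict comparison by that cone is precisely the substance you would still have to supply. As it stands, your argument proves part~(2) only for bounded traces and for $a,b$ in matrix amplifications --- which, to be fair, is all that the present paper uses (Proposition~\ref{prop:main-choquet-argument} applies it to $a,\eps 1_{\M}\in\M_+$ and $\sigma\in T(\M)$), but it is not a proof of the cited lemma in the stated generality.
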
 
This result has recently been put into an abstract framework where it can be viewed as an application of a Hahn--Banach separation theorem; see \cite[Appendix A]{RFRT24}. 

The following technical lemma, also due to Ng and Robert, allows us to take advantage of cut-down arguments when it comes to proving strict comparison.
\begin{lemma}[{\cite[Proposition 3.3]{NgR16}}] \label{lem:cut-down}
	Let $A$ be a C$^*$-algebra and $K \subseteq \mathrm{QT}_c(A)$ be a compact subset. Let $a,b \in (A \otimes \K)_+$ and $\gamma > 0$. Suppose that $d_\tau(a) \leq (1-\gamma)d_\tau(b)$ for all $\tau \in K$. Then for every $\eps > 0$ there exists $\delta > 0$ such that
\begin{equation}
	d_\tau((a-\eps)_+) \leq \left(1-\frac{\gamma}{2}\right)d_\tau((b-\delta)_+)
\end{equation}
for all $\tau \in K$.
\end{lemma}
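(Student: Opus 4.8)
The plan is to read the desired inequality as a uniform‑over‑$K$ comparison of rank functions of cutdowns, and to produce the single $\delta$ by a compactness argument. The starting observation is that for each fixed $\tau \in K$ one already has $d_\tau((a-\eps)_+) \le d_\tau(a) \le (1-\gamma)d_\tau(b)$, whereas the right‑hand side of the target inequality with the cutdown removed is $(1-\tfrac{\gamma}{2})d_\tau(b)$; since $1-\gamma < 1-\tfrac{\gamma}{2}$ there is a multiplicative gap to exploit. Moreover $d_\tau((b-\delta)_+) \nearrow d_\tau(b)$ as $\delta \downarrow 0$, because $(b-\delta)_+^{1/m} \nearrow \chi_{(\delta,\infty)}(b)$ as $m\to\infty$ and $\chi_{(\delta,\infty)}(b) \nearrow \chi_{(0,\infty)}(b)$ as $\delta\downarrow 0$ in the enveloping von Neumann algebra, on which $\tau$ is normal. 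So the gap $\tfrac{\gamma}{2}$ buys exactly enough room to replace $b$ by $(b-\delta)_+$ for small $\delta$ — pointwise in $\tau$.

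Concretely, I would fix $\tau \in K$ and argue as follows. If $d_\tau(b)=0$ then $d_\tau((a-\eps)_+) \le (1-\gamma)d_\tau(b)=0$ and any $\delta$ works at $\tau$. If $0 < d_\tau(b) < \infty$, then $(1-\tfrac{\gamma}{2})d_\tau(b) > (1-\gamma)d_\tau(b) \ge d_\tau((a-\eps)_+)$, so by the monotone convergence above there are $\delta_\tau > 0$ and $\eta_\tau > 0$ with $(1-\tfrac{\gamma}{2})d_\tau((b-\delta_\tau)_+) \ge d_\tau((a-\eps)_+) + \eta_\tau$. (The case $d_\tau(b)=\infty$ needs to be treated separately; see below.)

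The real work is in promoting this to a statement uniform in $\tau$. I would try to arrange that the inequality $(1-\tfrac{\gamma}{2})d_\sigma((b-\delta_\tau)_+) \ge d_\sigma((a-\eps)_+)$ persists for $\sigma$ in a neighbourhood of $\tau$ in $K$, so that a finite subcover of the compact set $K$ yields a workable $\delta := \min_j \delta_{\tau_j}$ (using that $\delta \mapsto d_\sigma((b-\delta)_+)$ is non‑increasing). Lower semicontinuity of $\sigma \mapsto d_\sigma((b-\delta_\tau)_+)$ controls the left‑hand side from below near $\tau$; for the right‑hand side I would pass to a slightly smaller cutdown $\eps_0 \in (0,\eps)$ and combine the defining property of the topology on $\QT(A)$ — that $\limsup_\lambda \sigma_\lambda((a-\eps)_+) \le \sigma_\infty((a-\eps_0)_+)$ along convergent nets, since $(a-\eps)_+ = ((a-\eps_0)_+ - (\eps-\eps_0))_+$ — with the elementary estimate $d_\tau((a-\eps)_+) \le \tfrac{2}{\eps}\,\tau((a-\tfrac{\eps}{2})_+)$ (from $(a-\tfrac{\eps}{2})_+ \ge \tfrac{\eps}{2}\chi_{(\eps,\infty)}(a)$) to obtain an upper bound on $d_\sigma((a-\eps)_+)$ near $\tau$.

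The hard part will be exactly this compactness/topology bookkeeping. Rank functions of cutdowns are only lower semicontinuous, and ``$f>g$'' with both $f,g$ lower semicontinuous need not define an open set, so one genuinely has to use the finer upper‑regularity half of the $\QT(A)$‑topology (the $\limsup$ clause in the definition of net convergence), and to track two cutdown parameters on $a$ simultaneously, rather than getting away with bare semicontinuity. A further point requiring care is the set of $\tau \in K$ with $d_\tau(b)=\infty$, where $d_\tau((b-\delta)_+)$ need not grow to $\infty$; this is another place where compactness of $K$ (as opposed to mere pointwise information) is essential. By contrast, the pointwise gap argument of the second paragraph is routine.
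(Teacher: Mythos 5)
The paper does not prove this lemma; it is imported verbatim from \cite[Proposition 3.3]{NgR16}, so there is no internal argument to measure your proposal against, and it must be judged on its own terms. Your skeleton (pointwise choice of $\delta_\tau$, then compactness of $K$ combined with the two semicontinuity halves of the topology on $\QT(A)$) is the right shape, but the step you yourself flag as the crux --- an upper bound on $d_\sigma((a-\eps)_+)$ for $\sigma$ near $\tau$ --- fails as described, and this is a genuine gap rather than bookkeeping. Chaining $d_\sigma((a-\eps)_+) \leq \tfrac{2}{\eps}\sigma((a-\tfrac{\eps}{2})_+)$ with the $\limsup$ clause yields at best $\limsup_{\sigma\to\tau} d_\sigma((a-\eps)_+) \leq \tfrac{2}{\eps}\tau((a-\eps_0)_+)$ for some $\eps_0 < \eps/2$. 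That quantity is not comparable to $d_\tau((a-\eps)_+)$: it can be as large as $\tfrac{2\|a\|}{\eps}\,d_\tau(a) \leq \tfrac{2\|a\|}{\eps}(1-\gamma)d_\tau(b)$, which dwarfs the target $(1-\tfrac{\gamma}{2})d_\tau((b-\delta_\tau)_+) \leq (1-\tfrac{\gamma}{2})d_\tau(b)$ whenever $\eps < 2\|a\|$. So the neighbourhoods you construct carry no usable inequality and the finite subcover proves nothing. The missing idea is to interpose a functional-calculus element: taking $h$ continuous with $h=0$ on $[0,\eps/2]$ and $h=1$ on $[\eps,\infty)$, one has $d_\sigma((a-\eps)_+) \leq \sigma(h(a)) \leq d_\sigma((a-\tfrac{\eps}{2})_+) \leq d_\sigma(a) \leq (1-\gamma)d_\sigma(b)$ for all $\sigma \in K$ --- so the hypothesis still controls the interpolant with no loss of a factor $2/\eps$ --- and $\sigma \mapsto \sigma(h(a))$ is approximately upper semicontinuous (up to a multiplicative $1+\eta$ and a widening of the ramp down to $\eps/4$), since $h(a)$ can be written as $(g(a)-\eta)_+$ for a suitable continuous $g$ dominated by such a wider ramp.

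Even with that repair, the degenerate cases you explicitly postpone are not peripheral; they are where the statement is most delicate, and your plan cannot close them. Near a point $\tau \in K$ with $d_\tau(b)=0$ (whence $d_\tau(a)=0$), both sides of the desired inequality tend to $0$ along nets $\sigma \to \tau$ in $K$, and the conclusion is a statement about the \emph{ratio} of two infinitesimal quantities, which no amount of upper/lower semicontinuity controls. Concretely, take $\sigma_n \to \tau$ in $K$ with $d_{\sigma_n}(a) = (1-\gamma)d_{\sigma_n}(b) \to 0$, with all of the rank of $a$ at spectral values near $\|a\|$ and a $\gamma$-fraction of the rank of $b$ (as seen by $\sigma_n$) at spectral values below $1/n$: then for every fixed $\delta$ the inequality $d_{\sigma_n}((a-\eps)_+) \leq (1-\tfrac{\gamma}{2})d_{\sigma_n}((b-\delta)_+)$ fails once $n > 1/\delta$. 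Such configurations are easy to realise with tracial states on $C([0,1])$ (atoms of mass $\tfrac{1}{n}$ at a point where $b$ takes the value $\tfrac{1}{n}$, accumulating at a point where $a$ and $b$ vanish), and nothing in your argument --- or indeed in the hypotheses as reproduced here --- excludes them. Any complete proof must confront this, and it is precisely the point at which the exact formulation and standing assumptions of \cite[Proposition~3.3]{NgR16} have to be consulted rather than reconstructed; a proof that defers the $d_\tau(b) \in \{0,\infty\}$ cases to ``compactness'' has not proved the lemma.
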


Using the previous lemma and standard stability results for Cuntz subequivalence, we can show that it suffices to consider $a, b \in A \otimes M_n$ in the definition of strict comparison. This result will be convenient when it comes to considering tracially complete C$^*$-algebras, as this class of algebras is closed under matrix amplifications but not under stabilisation. 
\begin{lemma}\label{lem:only-matrix}
		Let $A$ be a C$^*$-algebra and $K \subseteq \mathrm{QT}_c(A)$ be a compact subset. Suppose that for any $n \in \N$, $\gamma > 0$, and $a,b \in (A \otimes M_n)_+$ such that $d_\tau(a) \leq (1-\gamma)d_\tau(b)$ for all $\tau \in K$, we have $a \precsim b$. Then $A$ has strict comparison with respect to $K$.
\end{lemma}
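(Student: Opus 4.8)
The plan is to deduce the general case from the hypothesised matrix case by a compression argument, using Lemma~\ref{lem:cut-down} to ensure that the strict-inequality margin $\gamma$ survives the compression. So let $a,b\in(A\otimes\K)_+$ and suppose there is $\gamma>0$ with $d_\tau(a)\le(1-\gamma)d_\tau(b)$ for all $\tau\in K$; I must show $a\precsim b$. Since $a\precsim b$ holds as soon as $(a-\eta)_+\precsim b$ for every $\eta>0$ (a standard characterisation of Cuntz subequivalence; see e.g.\ \cite{ERS11}), it suffices to fix $\eta>0$ and produce a Cuntz subequivalence $(a-2\eta)_+\precsim b$.

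First I would apply Lemma~\ref{lem:cut-down} with $\eps=\eta$ to obtain $\delta_0>0$ such that
\begin{equation*}
  d_\tau\big((a-\eta)_+\big)\le\big(1-\tfrac{\gamma}{2}\big)\,d_\tau\big((b-\delta_0)_+\big)\qquad\text{for all }\tau\in K.
\end{equation*}
Next, using that the finite-rank truncations $pcp$ of an element $c\in(A\otimes\K)_+$ converge to $c$ in norm as the corner $p(A\otimes\K)p=A\otimes M_n$ increases, I would choose $n\in\N$ and the corresponding compression $p$ onto $A\otimes M_n$ large enough that $\|(a-\eta)_+-p(a-\eta)_+p\|<\eta$ and $\|b-pbp\|<\delta_0$. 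Set $a':=p(a-\eta)_+p$ and $b':=pbp$, both positive elements of $A\otimes M_n$. By the standard perturbation lemma for Cuntz subequivalence ($\|x-y\|<\eps\Rightarrow(x-\eps)_+\precsim y$) together with the monotonicity $pxp\precsim x$, these elements satisfy
\begin{equation*}
  (a-2\eta)_+\precsim a'\precsim(a-\eta)_+,\qquad(b-\delta_0)_+\precsim b'\precsim b.
\end{equation*}
In particular $d_\tau(a')\le d_\tau((a-\eta)_+)$ and $d_\tau((b-\delta_0)_+)\le d_\tau(b')$ for every $\tau\in K$, so combining with the first display gives $d_\tau(a')\le(1-\tfrac{\gamma}{2})d_\tau(b')$ for all $\tau\in K$. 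The hypothesis of the lemma, applied with $n$, the scalar $\gamma/2>0$, and $a',b'\in(A\otimes M_n)_+$, now yields $a'\precsim b'$ in $A\otimes M_n$, hence in $A\otimes\K$. Concatenating the subequivalences above gives $(a-2\eta)_+\precsim a'\precsim b'\precsim b$, which completes the proof since $\eta>0$ was arbitrary.

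There is no serious obstacle here; the whole content is in the order of operations. The one point that must be handled with care is that a naive compression of $b$ to $pbp$ destroys rank — one only has $b'\precsim b$, not $b\precsim b'$ — so without the preliminary application of Lemma~\ref{lem:cut-down} the margin $\gamma$ would simply be lost. Lemma~\ref{lem:cut-down} is precisely the device that trades $\gamma$ for $\gamma/2$ while replacing $b$ by the cut-down $(b-\delta_0)_+$, which the compression $b'$ then dominates. Dually, compressing $(a-\eta)_+$ rather than $a$ keeps $d_\tau(a')$ below $d_\tau((a-\eta)_+)$ rather than merely below $d_\tau(a)$, which is what makes the final chain of inequalities close up. Every inequality between cut-downs, truncations and Cuntz classes invoked above is among the routine ones recorded in \cite{ERS11, NgR16}.
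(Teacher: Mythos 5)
Your argument is correct and follows essentially the same route as the paper: apply Lemma~\ref{lem:cut-down} first to trade $\gamma$ for $\gamma/2$, then compress $(a-\eps)_+$ and $b$ into a matrix corner, use R{\o}rdam's perturbation lemma to sandwich the compressions between cut-downs of the originals, invoke the hypothesis, and chain the subequivalences. The only cosmetic difference is that the paper compresses by $e'_{\lambda'}\otimes 1_n$ for an approximate unit $(e'_{\lambda'})$ of $A$, whereas you compress by the multiplier projection onto $A\otimes M_n$; both land in $A\otimes M_n$ and the rest of the argument is identical.
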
 
\begin{proof}
        View $M_n \subseteq \K$ in the standard way and write $1_n$ for the unit of $M_n$. 
	Let $(e'_{\lambda'})_{\lambda' \in \Lambda'}$ be an increasing approximate unit for $A$. 
        Let $\Lambda = \Lambda' \times \N$ be the product of the directed sets.
	Define an increasing approximate unit $(e_\lambda)_{\lambda \in \Lambda}$ of $A \otimes \K$, by setting $e_\lambda = e'_{\lambda'} \otimes 1_n$ for each $\lambda = (\lambda',n) \in \Lambda$.
 
	Let $a,b \in (A \otimes \K)_+$ and $\gamma > 0$.
	Suppose that $d_\tau(a) \leq (1-\gamma)d_\tau(b)$ for all $\tau \in K$.
	Fix $\eps > 0$. By Lemma \ref{lem:cut-down}, there is $\delta > 0$ such that 
        \begin{equation}
            d_\tau((a-\eps)_+)\leq \left(1-\frac{\gamma}{2}\right)d_\tau((b-\delta)_+) 
        \end{equation} 
        for all $\tau \in K$. Choose $\lambda \in \Lambda$ such that $\|(a-\eps)_+-e_\lambda(a-\eps)_+e_\lambda\| < \eps$ and $\|b - e_\lambda b e_\lambda\| < \delta$.
	Then, by \cite[Proposition 2.2]{Ro92}, we have
        \begin{align}
                (a-2\eps)_+ &\precsim e_\lambda(a-\eps)_+e_\lambda \precsim (a-\eps)_+ \mbox{ and } \label{eqn:Rordam-1}\\
                (b-\delta)_+ &\precsim e_\lambda b e_\lambda \precsim b. \label{eqn:Rordam-2}
        \end{align}   
	Say $\lambda = (\lambda',n)$ where $\lambda' \in \Lambda'$ and $n \in \N$. Then both $e_\lambda(a-\eps)_+e_\lambda$ and $e_\lambda b e_\lambda$ are elements of $(A \otimes M_n)_+$, and we have
	\begin{equation}
	\begin{split}
	    d_\tau(e_\lambda (a-\eps)_+e_\lambda) &\leq d_\tau((a-\eps)_+) \\
		&\leq \left(1-\frac{\gamma}{2}\right)d_\tau((b-\delta)_+) \\
		&\leq \left(1-\frac{\gamma}{2}\right)d_\tau(e_\lambda b e_\lambda)
	\end{split}
	\end{equation}
	for all $\tau \in K$.
	Therefore, by our hypothesis, $e_\lambda (a-\eps)_+e_\lambda \precsim e_\lambda b e_\lambda$. 
	Hence, by \eqref{eqn:Rordam-1} and \eqref{eqn:Rordam-2}, $(a-2\eps)_+ \precsim b$.
	Since $\eps$ was arbitrary, $a \precsim b$.
\end{proof}

\section{The main construction}\label{sec:strict-comparison-TP}

We now carry out the strategy discussed in the introduction. The first technical result is the construction of approximate projections in hereditary subalgebras of a tracially complete C$^*$-algebra with complemented partitions of unity (CPoU). 

\begin{lemma}\label{lem:onestep}
Let $(\M, X)$ be a type II$_1$ factorial tracially complete C$^*$-algebra with CPoU. Let $a \in \M_+$. Let $f:X \rightarrow [0,1]$ be a continuous affine function such that $f(\tau) \leq d_\tau(a)$ for all $\tau \in X$. Then for any $\eps > 0$, there exists a positive contraction $p_1 \in \overline{aMa}$ such that
\begin{align}
	\|p_1^2 - p_1\|_{2,X} < \eps, \label{onestep:cond1}\\
	\sup_{\tau \in X}|\tau(p_1) - f(\tau)| < \eps. \label{onestep:cond2}
\end{align}
Moreover, suppose we are also given $\eps_0 \in (0,\tfrac{1}{4})$ and a positive contraction $p_0 \in \overline{aMa}$ such that
\begin{align}
	\|p_0^2 - p_0\|_{2,X} < \eps_0, \label{moreover:cond1}\\
	\sup_{\tau \in X}|\tau(p_0) - f(\tau)| < \eps_0. \label{moreover:cond2}
\end{align}
Then $p_1 \in \overline{aMa}$ can be chosen such that, in addition to \eqref{onestep:cond1} and \eqref{onestep:cond2}, we have
\begin{equation}
	\|p_1 - p_0\|_{2,X} < 8\eps_0^{1/4}. \label{onestep:cond3}
\end{equation}
\end{lemma}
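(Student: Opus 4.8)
The plan is to prove a fibrewise version of the statement in each von Neumann algebra $\pi_\tau(\M)''$ ($\tau\in X$), glue the fibrewise solutions using CPoU, and push the result back from the ultrapower $\M^\omega$ into $\M$; working inside the $\|\cdot\|$-closed hereditary subalgebra $\overline{aMa}$ is what keeps enough $\|\cdot\|$-norm control to land back in $\overline{aMa}$. For $\tau\in X$ let $s_\tau\in\pi_\tau(\M)''$ be the support projection of $\pi_\tau(a)$, so $\tau(s_\tau)=d_\tau(a)\geq f(\tau)$; we may assume $\|a\|\leq 1$.

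\emph{Fibrewise candidates.} Fix $\sigma\in X$. Since $\pi_\sigma(\M)''$ is type II$_1$ and $f(\sigma)\leq\sigma(s_\sigma)$, there is a projection $p^{(\sigma)}\leq s_\sigma$ with $\sigma(p^{(\sigma)})=f(\sigma)$; in the ``moreover'' situation, applying Proposition~\ref{prop:close-projection} with the trace $\sigma$ (taking $q_1:=s_\sigma$, $e:=\pi_\sigma(p_0)$, whose hypotheses hold since $\|p_0^2-p_0\|_{2,X}<\eps_0$ and $|\sigma(p_0)-f(\sigma)|<\eps_0$) additionally gives $\|p^{(\sigma)}-\pi_\sigma(p_0)\|_{2,\sigma}<4\eps_0^{1/4}$. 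Because $\pi_\sigma(a)^{1/n}\to s_\sigma$ $\sigma$-strongly and $s_\sigma p^{(\sigma)}s_\sigma=p^{(\sigma)}$, the element $c_\sigma:=a^{1/n}ra^{1/n}$ — where $r\in\M$ is a positive contraction with $\|\pi_\sigma(r)-p^{(\sigma)}\|_{2,\sigma}$ tiny (Kaplansky density) and $n$ large — is a positive contraction lying in $\overline{aMa}$ with $\|\pi_\sigma(c_\sigma)-p^{(\sigma)}\|_{2,\sigma}$ as small as we like; hence $\|c_\sigma^2-c_\sigma\|_{2,\sigma}$, $|\sigma(c_\sigma)-f(\sigma)|$ and (moreover) $\|c_\sigma-p_0\|_{2,\sigma}-4\eps_0^{1/4}$ are all small. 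By weak$^*$-continuity of $\tau\mapsto\tau(y)$ (and of $f$), these estimates persist on an open neighbourhood $V_\sigma\ni\sigma$; by compactness fix $\sigma_1,\dots,\sigma_k$ with $\bigcup_jV_{\sigma_j}=X$ and put $c_j:=c_{\sigma_j}$, $V_j:=V_{\sigma_j}$.

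\emph{Gluing via CPoU.} Apply CPoU (Definition~\ref{def:CPoU}) with a separable $S$ containing $a$, $p_0$, $c_1,\dots,c_k$ and the element of $\M$ realising $f$ used below, with positive elements $a_j\in\M_+$ built from the defects — at least $(c_j^2-c_j)^2$ and $\lambda(c_j-p_0)^2$ with a weight $\lambda$ of order $\eps^2/\eps_0^{1/2}$ reconciling the $\eps$-scale of \eqref{onestep:cond1} with the $\eps_0^{1/4}$-scale of \eqref{onestep:cond3}, together with a further ingredient forcing $\tau(c_j)\approx f(\tau)$ — and with $\delta>\sup_\tau\min_j\tau(a_j)$, which is small because the $V_j$ cover $X$. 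We obtain orthogonal projections $q_1,\dots,q_k\in\M^\omega\cap S'$ summing to $1$ with $\tau(a_jq_j)\leq\delta\tau(q_j)$ for $\tau\in X^\omega$. Set $p_1':=\sum_jq_jc_jq_j$. Since each $q_j$ commutes with $a$, $c_j$ and $p_0$, and the $q_j$ are orthogonal, $p_1'$ is a positive contraction in $\overline{a\M^\omega a}$, with $(p_1')^2-p_1'=\sum_jq_j(c_j^2-c_j)q_j$ and $p_1'-p_0=\sum_jq_j(c_j-p_0)q_j$. Compressing $(c_j^2-c_j)^2\leq a_j$ and $\lambda(c_j-p_0)^2\leq a_j$ by $q_j$ gives $\tau((c_j^2-c_j)^2q_j)\leq\delta\tau(q_j)$ and $\tau((c_j-p_0)^2q_j)\leq\tfrac{\delta}{\lambda}\tau(q_j)$; summing over $j$ yields $\|(p_1')^2-p_1'\|_{2,X^\omega}^2\leq\delta<\eps^2$ and $\|p_1'-p_0\|_{2,X^\omega}^2\leq\delta/\lambda<(8\eps_0^{1/4})^2$ (with room to spare given the choice of $\lambda$), so $p_1'$ satisfies \eqref{onestep:cond1} and \eqref{onestep:cond3}. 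For \eqref{onestep:cond2} write $\tau(p_1')=\sum_j\tau(c_jq_j)$ and use that $(\M,X)$ is factorial: for $\tau$ extremal in $X$, $\pi_\tau(\M)''$ is a factor, so $\pi_\tau(q_j)$ lies in the relative commutant of a factor containing $\pi_\tau(c_j)$ and $\pi_\tau(a_j)$, and the trace-preserving conditional expectation collapses it to a scalar, giving $\tau(c_jq_j)=\tau(c_j)\tau(q_j)$ and $\tau(a_jq_j)=\tau(a_j)\tau(q_j)$; hence $\tau(q_j)>0$ forces $\tau(a_j)\leq\delta$, which through the further ingredient in $a_j$ forces $|\tau(c_j)-f(\tau)|<\eps$, so $|\tau(p_1')-f(\tau)|\leq\sum_j|\tau(c_j)-f(\tau)|\tau(q_j)<\eps$. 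An arbitrary $\tau\in X$ is the barycentre of a probability measure on $\partial_eX$, and all three quantities above are integrals over $\partial_eX$ of their fibrewise counterparts, so the bounds pass to all of $X$ (and $X^\omega$).

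\emph{Reflecting back, and the main obstacle.} As $p_1'\in\overline{a\M^\omega a}$, it is a $\|\cdot\|$-limit of elements $aza$ with $z=(z_n)_n\in\M^\omega$; the $az_na$ lie in $\overline{aMa}$, and a standard reindexing argument (via $X\hookrightarrow X^\omega$ along constant sequences) produces some $n$ for which $az_na$, corrected by a continuous function to a positive contraction, lies in $\overline{aMa}$ and satisfies \eqref{onestep:cond1}, \eqref{onestep:cond2} and (in the moreover case) \eqref{onestep:cond3}; this is the desired $p_1$. The main obstacle is the ``further ingredient'' in the $a_j$ that keeps $\tau(c_j)$ near $f(\tau)$ on the support of $q_j$: one cannot encode this by inserting $|c_j-b|$ or $(c_j-b)_\pm$ into $a_j$ (with $b$ realising $f$), because a near-projection with the correct trace is never $\|\cdot\|_{2,\tau}$-close to such a fixed $b$ — for instance $\|p-\tfrac12 1\|_{2,\tau}=\tfrac12$ whenever $\tau(p)=\tfrac12$ — so comparing $c_j$ to any reference realising $f$ fails. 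The device must instead be assembled from the cover $\{V_j\}$ and the factor structure of the extremal fibres, and its interaction with the CPoU trace inequalities and the separability of $S$ is where the argument is genuinely delicate; the rest is bookkeeping the fibrewise, Kaplansky, CPoU and reflection errors against the budgets $\eps$ and $8\eps_0^{1/4}$.
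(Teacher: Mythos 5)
Your architecture (fibrewise construction in $\pi_\tau(\M)''$, gluing by CPoU, reflection back into $\overline{a\M a}$) is the same as the paper's, and your treatment of \eqref{onestep:cond1} and \eqref{onestep:cond3} is essentially right. But the proof is incomplete exactly where you say it is: there is no mechanism forcing $\tau(p_1)\approx f(\tau)$ after the gluing, and the workaround you sketch does not close the gap. Knowing $\tau(a_jq_j)\le\delta\tau(q_j)$ for your $a_j$ (built only from $(c_j^2-c_j)^2$ and $(c_j-p_0)^2$) says nothing about $\tau(c_j)-f(\tau)$ on the support of $q_j$: the estimate $|\rho(c_j)-f(\rho)|<\eps$ holds only for $\rho\in V_j$, and CPoU gives no control on where each $q_j$ sits relative to the cover $\{V_j\}$. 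The factoriality detour does not rescue this either: $q_j$ commutes only with the separable set $S$, so $\pi_\tau(q_j)$ need not commute with the factor $\pi_\tau(\M)''$; and even granting the product formula $\tau(c_jq_j)=\tau(c_j)\tau(q_j)$, you would still need $|\tau(c_j)-f(\tau)|$ small for the given $\tau$, which is precisely the unavailable ``further ingredient''.

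The missing idea is the Fack--de la Harpe theorem \cite{FH80}. First realise $f$ by a fixed self-adjoint $c\in\M$ with $\tau(c)=f(\tau)$ for all $\tau\in T(\M)$ (Propositions \ref{prop:extending-affine} and \ref{prop:CP}), and choose the fibrewise projection $\bar p_\tau$ so that $\sigma(\bar p_\tau)=\sigma(\pi_\tau(c))$ for \emph{all} $\sigma\in T(\pi_\tau(\M)'')$, which Lemma \ref{lem:closest} provides. Then $\bar p_\tau-\pi_\tau(c)$ vanishes on every trace of the finite von Neumann algebra $\pi_\tau(\M)''$ and is therefore a sum of at most $10$ commutators. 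Kaplansky density approximates both $\bar p_\tau$ and the commutator decomposition by elements of $\M$, and the defect $\bigl|p_\tau-c-\sum_{j}[x_{j,\tau},y_{j,\tau}]\bigr|^2$ becomes the third summand of $a_\tau$. Since the CPoU projections $q_i$ commute with all the $x_{j,\tau_i},y_{j,\tau_i}$ and are orthogonal, one has $\sum_iq_i[x_{j,\tau_i},y_{j,\tau_i}]=[x_j,y_j]$ with $x_j=\sum_iq_ix_{j,\tau_i}$ and $y_j=\sum_iq_iy_{j,\tau_i}$, so the glued element satisfies $\|p-c-\sum_j[x_j,y_j]\|_{2,X^\omega}<\eps$; as commutators have trace zero, $|\tau(p)-f(\tau)|<\eps$ follows. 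This is exactly the device you were looking for: it converts the tracial constraint into a $\|\cdot\|_{2}$-norm constraint that CPoU can propagate, circumventing the obstruction you correctly identified (no fixed $b$ realising $f$ can be $\|\cdot\|_{2,\tau}$-close to a projection of the right trace). A smaller point: the paper glues with $p=\sum_ip_{\tau_i}^{1/2}q_ip_{\tau_i}^{1/2}$ rather than $\sum_iq_ip_{\tau_i}q_i$, so that the representative $\sum_ip_{\tau_i}^{1/2}q_i^{(n)}p_{\tau_i}^{1/2}$ lies in $\overline{a\M a}$ automatically by hereditarity, avoiding the rather delicate reflection step you sketch at the end.
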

\begin{proof}
We first prove the lemma including the ``moreover'' part.

By Proposition \ref{prop:extending-affine}, we can extend $f$ to a continuous affine function on $T(\M)$. Then by Proposition \ref{prop:CP}, there is a self-adjoint $c \in \M$ such that $\tau(c) = f(\tau)$ for all $\tau \in X$ (in fact for all $\tau \in T(\M))$.   

Let $\tau \in X$. As $\M$ has type II$_1$, $\pi_\tau(\M)''$ is type II$_1$ von Neumann algebra. Let $s_\tau$ denote the support projection of $\pi_\tau(a)$. 
Let $\sigma \in T(\pi_\tau(\M)'')$. As $X$ is a closed face, $\sigma \circ \pi_{\tau} \in X$ by \cite[Lemma 2.10]{TraciallyComplete}.
Assuming $\sigma$ is normal, we have $\sigma(s_\tau) = d_\sigma(\pi_\tau(a)) = d_{\sigma \circ \pi_\tau}(a)$.
Therefore, we have $\sigma(\pi_\tau(c)) = f(\sigma \circ \pi_\tau) \leq \sigma(s_\tau)$ for all normal $\sigma \in T(\pi_\tau(\M)'')$.
As normal traces are dense, we have 
\begin{equation}
    \sigma(\pi_\tau(c)) \leq \sigma(s_\tau) \label{onestep:trace-estimate} 
\end{equation}
for all $\sigma \in T(\pi_\tau(\M)'')$.

By Proposition~\ref{prop:close-projection}, there is a projection $\bar{p}_\tau \in s_\tau\pi_\tau(\M)''s_\tau$ such that $\|\bar{p}_\tau - \pi_\tau(p_0)\|_{2,\sigma} < 4\eps_0^{1/4}$ and $\sigma(\bar{p}_\tau) = \sigma(\pi_\tau(c))$ for all $\sigma \in T(\pi_\tau(\M)'')$. Since $\bar{p}_\tau-\pi_\tau(c)$ vanishes on all traces, it can be written as a sum of at most 10 commutators by \cite[Th{\'e}or{\`e}me~2.3]{FH80}. 
Since $\pi_\tau(\overline{a \M a})$ is dense in $s_\tau\pi_\tau(\M)''s_\tau$ with respect to the ultrastrong topology, by Kaplansky's Density Theorem, there are a positive contraction $p_\tau \in \overline{a \M a}$ and elements $x_{j, \tau}, y_{j, \tau} \in \M$ for $j \in \{1, \ldots, 10\}$ such that
\begin{align}
    \Big\| p_\tau - c - \sum_{i=1}^{10} [ x_{j, \tau}, y_{j, \tau} ] \Big\|_{2, \tau} &< \tfrac{1}{2}\eps, \label{onestep:fibrewise-1}\\ 
    \|p_\tau - p_\tau^2\|_{2,\tau} &< \tfrac{1}{2}\eps,  \label{onestep:fibrewise-2}\\
    \|p_\tau - p_0\|_{2,\tau} &< 4\eps_0^{1/4}.  \label{onestep:fibrewise-3}
\end{align}

For each $\tau \in X$, define
\begin{equation}
    a_\tau = |p_\tau - p_\tau^2|^2 + \Big| p_\tau - c - \sum_{j=1}^{10} [ x_{j, \tau}, y_{j, \tau} ] \Big|^2 + \frac{\eps^2}{64\eps_0^{1/2}}|p_\tau - p_0|^2
\end{equation}
and note that $\tau(a_\tau) < \tfrac{3}{4}\eps^2$. Since the topology on $X$ is inherited from the weak$^*$ topology on $T(\M)$, the map $\rho \mapsto \rho(a_\tau)$ is continuous on $X$. Hence, there is a open neighbourhood $U_\tau$ of $\tau$ in $X$ such that $\rho(a_\tau) < \tfrac{3}{4}\eps^2$ for all $\rho \in U_\tau$.  

The collection $\{U_\tau: \tau \in X\}$ forms an open cover of $X$. By compactness of $X$, 
there is a finite subcover $\{U_{\tau_1},\ldots,U_{\tau_k}\}$.
Therefore, there are $\tau_1, \ldots, \tau_k \in X$ such that
    \begin{equation}
         \sup_{\tau \in X} \min_{1 \leq i \leq k} \tau(a_{\tau_i}) < \tfrac{3}{4}\eps^2.
    \end{equation}
Let $S \subseteq \M$ be the $\|\cdot\|_{2,X}$-separable subset generated by $p_{\tau_i}$, $x_{j, \tau_i}$ and $y_{j, \tau_i}$ for $i=1,\ldots,k$ and $j=1,\ldots,10$. Let $\omega \in \beta\N \setminus \N$ be a free ultrafilter. By CPoU, there are projections $q_1, \ldots, q_k \in \M^\omega \cap S'$ summing to $1_{\M^\omega}$ such that
\begin{equation}\label{CPoU:output}
    \tau(q_i a_{\tau_i}) \leq  \tfrac{3}{4}\eps^2 \tau(q_i), \qquad  \tau \in X^\omega, i = 1, \ldots, k.
\end{equation}
Define 
\begin{equation}
     p = \sum_{i=1}^k p_{\tau_i}^{1/2} q_i p_{\tau_i}^{1/2}, \quad x_j = \sum_{i=1}^k q_i x_{j, \tau_i}, \quad \text{and} \quad y_j = \sum_{i=1}^k q_i y_{j, \tau_i}
\end{equation}
for $j= 1, \ldots, 10$.  Then, by summing \eqref{CPoU:output} for $i \in \{1,\ldots,k\}$ and taking advantage of the orthogonality and commutativity properties of $q_1,\ldots,q_k$, we obtain 
    \begin{equation}
     \tau\left(|p - p^2|^2 + \Big| p - c - \sum_{j=1}^{10} [ x_{j}, y_{j} ] \Big|^2 + \frac{\eps^2}{64\eps_0^{1/2}}|p - p_0|^2\right) \leq \tfrac{3}{4}\eps^2 \label{CPoU:magic}
\end{equation}
    for all $\tau \in X^\omega$. By linearity, the left hand side of \eqref{CPoU:magic} is a sum of three positive terms. Therefore, we deduce that
\begin{align}
    \Big\| p - c - \sum_{i=1}^{10} [ x_{j}, y_{j} ] \Big\|_{2, \tau}^2 &\leq \tfrac{3}{4}\eps^2 < \eps^2, \\ 
    \|p - p^2\|_{2,\tau}^2  &\leq \tfrac{3}{4}\eps^2 < \eps^2, \\
    \|p - p_0\|_{2,\tau}^2 &\leq \frac{64\eps_0^{1/2}}{\eps^2}  \cdot \frac{3\eps^2}{4} < 64\eps_0^{1/2}
\end{align}
for all $\tau \in X^\omega$.   

Let $(q_i^{(n)})_{n=1}^\infty$ be a representative sequence for $q_i \in \M^\omega$. 
Let $p^{(n)} = \sum_{i=1}^k p_{\tau_i}^{1/2} q_i^{(n)} p_{\tau_i}^{1/2} \in \overline{a \M a}$.  Then $(p^{(n)})_{n=1}^\infty$ is a representative sequence for $p \in \M^\omega$. 
Let $(x^{(n)}_{j})_{n=1}^\infty$ and $(y^{(n)}_{j})_{n=1}^\infty$ be representative sequences for $x_j$ and $y_j$, respectively, for $j=1,\ldots,10$.
By the definition of a limit along an ultrafilter, there is a non-empty set of natural numbers $I \in \omega$ such that for any $n \in I$, we have 
\begin{align}
    \Big\| p^{(n)} - c - \sum_{i=1}^{10} [ x^{(n)}_{j}, y^{(n)}_{j} ] \Big\|_{2, X} &< \eps, \label{eqn:result-1} \\ 
    \|p^{(n)} - (p^{(n)})^2\|_{2,X}  &< \eps, \label{eqn:result-2}\\
    \|p^{(n)} - p_0\|_{2,X} &< 8\eps_0^{1/4}.\label{eqn:result-3}
\end{align} 
Choose the smallest $n \in I$ and set $p_1 = p^{(n)}$. Then \eqref{onestep:cond1} and \eqref{onestep:cond3} are satisfied. Since commutators have trace zero and $c$ was chosen with $\tau(c) = f(\tau)$ for all $\tau \in X$, \eqref{onestep:cond2} follows from \eqref{eqn:result-1}. This completes the proof of the lemma with the ``moreover'' part. 

In the absence of an approximate projection $p_0$ that we want to remain close to, the proof above simplifies. In each fibre, we just need to choose a projection $\bar{p}_\tau \in s_\tau\pi_\tau(\M)''s_\tau$ with $\sigma(\bar{p}_\tau) = \sigma(\pi_\tau(c))$ for all $\sigma \in T(\pi_\tau(\M)'')$, which exists by \eqref{onestep:trace-estimate} as $\M$ is type II$_1$. We can now follow the rest of the proof above with $p_0=0$ and $\eps_0 = 1$ (or adapt the CPoU argument to this simpler case).
\end{proof}

By repeated use of the previous lemma we can construct a Cauchy sequence of approximate projections, which will converge in the tracially complete C$^*$-algebra $(\M,X)$.    
\begin{theorem}\label{thm:projections}
Let $(\M, X)$ be a type II$_1$ factorial tracially complete C$^*$-algebra with CPoU. Let $a \in \M_+$. Let $f:X \rightarrow [0,1]$ be a continuous affine function such that $f(\tau) \leq d_\tau(a)$ for all $\tau \in X$. 
There exists a projection $p \in \M$ such that
\begin{enumerate}[(i)]
 \item $\tau(p) = f(\tau)$ for all $\tau \in X$; \label{projections:cond1}
 \item for any $b \in M_+$ with $ba = ab = a$, we have $bp = pb = p$. \label{projections:cond2}
\end{enumerate}
\end{theorem}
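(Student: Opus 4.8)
The plan is to feed Lemma~\ref{lem:onestep} into itself, producing a $\|\cdot\|_{2,X}$-Cauchy sequence of approximate projections inside the hereditary subalgebra $\overline{a\M a}$, and then to pass to the limit; note that the hypothesis $f(\tau) \leq d_\tau(a)$ here is exactly the hypothesis of Lemma~\ref{lem:onestep}. First I would fix tolerances $\eps_n \in (0,\tfrac14)$ with $\sum_{n=1}^\infty 8\eps_n^{1/4} < \infty$, for instance $\eps_n = 16^{-n-1}$. Applying the first part of Lemma~\ref{lem:onestep} with $\eps = \eps_1$ gives a positive contraction $p_1 \in \overline{a\M a}$ with $\|p_1^2 - p_1\|_{2,X} < \eps_1$ and $\sup_{\tau \in X}|\tau(p_1) - f(\tau)| < \eps_1$. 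Then, inductively, given $p_n \in \overline{a\M a}$ satisfying the estimates at level $\eps_n$, I would apply the ``moreover'' part of Lemma~\ref{lem:onestep} with $p_0 = p_n$, $\eps_0 = \eps_n$, and $\eps = \eps_{n+1}$ to obtain $p_{n+1} \in \overline{a\M a}$ with $\|p_{n+1}^2 - p_{n+1}\|_{2,X} < \eps_{n+1}$, $\sup_{\tau \in X}|\tau(p_{n+1}) - f(\tau)| < \eps_{n+1}$, and $\|p_{n+1} - p_n\|_{2,X} < 8\eps_n^{1/4}$.

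By the choice of the $\eps_n$, the telescoping estimates make $(p_n)_{n=1}^\infty$ a $\|\cdot\|_{2,X}$-Cauchy sequence lying in the $\|\cdot\|$-closed unit ball of $\M$, which is $\|\cdot\|_{2,X}$-complete by Definition~\ref{def:TC}; hence $p_n \to p$ in $\|\cdot\|_{2,X}$ for some $p$ in that unit ball. The properties of $p$ are then read off by continuity. Since the adjoint is $\|\cdot\|_{2,X}$-continuous, $p = p^*$. Since multiplication is $\|\cdot\|_{2,X}$-continuous on $\|\cdot\|$-bounded sets, $p_n^2 \to p^2$, and therefore $\|p^2 - p\|_{2,X} = \lim_n \|p_n^2 - p_n\|_{2,X} = 0$, so $p^2 = p$ because $\|\cdot\|_{2,X}$ is a norm. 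Thus $p$ is a projection in $\M$. Each $\tau \in X$ is $\|\cdot\|_{2,X}$-continuous, so $\tau(p) = \lim_n \tau(p_n) = f(\tau)$, which is \eqref{projections:cond1}.

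For \eqref{projections:cond2}, suppose $b \in \M_+$ satisfies $ab = ba = a$. On the dense $*$-subalgebra $a\M a$ of $\overline{a\M a}$ one has $b(axa) = (ba)xa = axa$ and $(axa)b = ax(ab) = axa$, so by $\|\cdot\|$-continuity of left and right multiplication by $b$ it follows that $bc = cb = c$ for every $c \in \overline{a\M a}$. In particular $bp_n = p_n b = p_n$ for all $n$, and passing to the $\|\cdot\|_{2,X}$-limit, using $\|\cdot\|_{2,X}$-continuity of multiplication by the fixed element $b$, gives $bp = pb = p$.

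Given Lemma~\ref{lem:onestep}, essentially all of this is routine: the main obstacle has already been handled inside that lemma (the CPoU gluing of the fibrewise result Proposition~\ref{prop:close-projection}). The only points that need a little care here are arranging the $\eps_n$ so that the bounds $\|p_{n+1} - p_n\|_{2,X} < 8\eps_n^{1/4}$ are summable while each $\eps_n$ stays below $\tfrac14$, and the observation that, although $p$ itself need not lie in the merely $\|\cdot\|$-closed algebra $\overline{a\M a}$, the relation $bc = cb = c$ on $\overline{a\M a}$ survives the $\|\cdot\|_{2,X}$-limit, which is exactly what \eqref{projections:cond2} demands. I do not expect any further difficulty.
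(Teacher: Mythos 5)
Your proposal is correct and follows essentially the same route as the paper's own proof: the same choice $\eps_n = 16^{-n-1}$, the same inductive use of Lemma~\ref{lem:onestep} to build a $\|\cdot\|_{2,X}$-Cauchy sequence of approximate projections in $\overline{a\M a}$, completeness of the unit ball to pass to the limit, and $\|\cdot\|_{2,X}$-continuity of multiplication (on bounded sets) and of the traces in $X$ to read off the conclusions, including the survival of the relation $bc=cb=c$ under the limit for part~(ii).
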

\begin{proof}
Take $\eps_i = \tfrac{1}{16^{i+1}}$. Applying Lemma \ref{lem:onestep} inductively, we construct a sequence of positive contractions $(p_i)_{i=0}^\infty$ in $\overline{aMa}$ such that
\begin{align}
	\|p_i^2 - p_i\|_{2,X} &< \eps_i, &i \geq 0, \label{eqn:seq-1} \\
	\sup_{\tau \in X}|\tau(p_i) - f(\tau)| &< \eps_i, &i \geq 0, \label{eqn:seq-2}\\
	\|p_{i} - p_{i-1}\|_{2,X} &< 8\eps_{i-1}^{1/4}, &i \geq 1. \label{eqn:seq-3}
\end{align}

Since $\sum_{i=1}^\infty 8\eps_{i-1}^{1/4} < \infty$, the sequence $(p_i)_{i=0}^\infty$ is a $\|\cdot\|_{2, X}$-Cauchy sequence. As each $p_i$ is a positive contraction, we have $\|p_i\| \leq 1$ for all $i \in \N$.   
Therefore, as $\M$ is tracially complete, $(p_i)_{i=0}^\infty$ converges in $\|\cdot\|_{2, X}$-norm to some contraction $p \in \M$. 

By \eqref{eqn:seq-1} and the $\|\cdot\|_{2,X}$-continuity of multiplication on $\|\cdot\|$-bounded sets (see Section \ref{subsec:completion}), we obtain $p^2 = p$. As every $\tau \in X$ is $\|\cdot\|_{2, X}$-continuous, we obtain $\tau(p) = f(\tau)$ from \eqref{eqn:seq-2} for every $\tau \in X$. 
Suppose $b \in \M_+$ satisfies $ba =ab = a$. Then $bp_i = p_i$ for all $i \in \N$ as $p_i \in \overline{aMa}$. Since left multiplication by $b$ is $\|\cdot\|_{2, X}$-continuous on $\M$, we have $bp = p$. Similarly, we have $pb = p$.
\end{proof}

The projection $p$ constructed in Theorem \ref{thm:projections} lies in the $\|\cdot\|_{2,X}$-closure of the hereditary subalgebra $\overline{a\M a}$.
In general, it need not lie in $\overline{a \M a}$, as the following example shows.
\begin{example}
Consider the case where $\M$ is a II$_1$ factor, $X = \{\tr_\M\}$, and $a$ is a positive element with $d_\tr(a) = 1$.
The only projection $p \in \M$ with $\tr(p)=1$ is the identity. 
However, if $1_M \in \overline{a\M a}$, then $a^{1/n}$ converges in norm to the identity, which is not true when $a$ has spectrum $[0,1]$ for example. 
\end{example}
Nevertheless, Theorem \ref{thm:projections} \eqref{projections:cond2} allows us to control the Cuntz equivalence class of the projection $p$ to some degree. For the remainder of this section, we introduce the following notation.

\begin{notation}
Let $a,b \in A$ be positive elements of the C$^*$-algebra $A$. We write $b \triangleright a$ when $b$ acts as a unit on $a$, i.e.\ when $ba=ab=a$. 
Given $0 < \eps_1 < \eps_2 < 1$, we write $\eta_{\eps_1,\eps_2}:[0,1]\rightarrow[0,1]$ for the continuous function that is zero on $[0,\eps_1]$, affine on $[\eps_1,\eps_2]$, and one on $[\eps_2,1]$.
\end{notation}

We first consider upper bounds on the Cuntz equivalence class of the projections $p$ constructed by Theorem \ref{thm:projections}.

\begin{proposition}\label{prop:projections-below}
Let $(\M, X)$ be a type II$_1$ factorial tracially complete C$^*$-algebra with CPoU. 
Let $f:X \rightarrow [0,1]$ be a continuous affine function.
Let $a \in \M_{+,1}$ and $\eps > 0$. 
Suppose that 
\begin{equation}f(\tau) \leq d_\tau((a-\eps)_+)\end{equation} for all $\tau \in X$.
Then there exists a projection $p \in \overline{a\M a}$ with $\tau(p) = f(\tau)$ for all $\tau \in X$. In particular, $p \precsim a$. 
\end{proposition}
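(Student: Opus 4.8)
The plan is to obtain $p$ by applying Theorem~\ref{thm:projections} to the cut-down $(a-\eps)_+$, and then to use Theorem~\ref{thm:projections}\eqref{projections:cond2} to move the resulting projection from the $\|\cdot\|_{2,X}$-closure of $\overline{a\M a}$ into $\overline{a\M a}$ itself.

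First I would dispose of the degenerate case: we may assume $0 < \eps < 1$, since if $\eps \geq 1$ then $(a-\eps)_+ = 0$ (as $\|a\| \leq 1$), so the hypothesis forces $f \equiv 0$ and $p = 0$ satisfies the conclusion. Then I apply Theorem~\ref{thm:projections} with the positive element $(a-\eps)_+ \in \M_+$ in place of $a$, keeping the same $f$; the standing assumption $f(\tau) \leq d_\tau((a-\eps)_+)$ for $\tau \in X$ is exactly the hypothesis that theorem requires. This produces a projection $p \in \M$ with $\tau(p) = f(\tau)$ for all $\tau \in X$ and such that $bp = pb = p$ for every $b \in \M_+$ with $b \triangleright (a-\eps)_+$.

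The key step is to exhibit such a $b$ inside $\overline{a\M a}$. I would take $b = \eta_{\eps/2,\eps}(a)$. Because $\eta_{\eps/2,\eps}$ is identically $1$ on $[\eps,1]$, continuous functional calculus gives $\eta_{\eps/2,\eps}(t)(t-\eps)_+ = (t-\eps)_+$ on $[0,1]$, so $b \triangleright (a-\eps)_+$ and hence $bp = pb = p$; and since $\eta_{\eps/2,\eps}$ vanishes at $0$, we have $b \in \overline{a\M a}$. As $\overline{a\M a}$ is a hereditary subalgebra of $\M$ we have $\overline{a\M a}\cdot\M\cdot\overline{a\M a} \subseteq \overline{a\M a}$ (approximate the outer factors by elements of $a\M a$), and therefore $p = bpb \in \overline{a\M a}$, which is the main assertion. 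For the final claim $p \precsim a$, note that $p \leq 1_\M$ forces $p = bpb \leq b^2 \leq (2/\eps)^2 a^2$ (an inequality of functions of $a$), so $p \precsim a^2$ and hence $p \precsim a$; alternatively, any positive element of the hereditary subalgebra $\overline{a\M a}$ is automatically Cuntz-subequivalent to $a$.

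I do not anticipate a genuine obstacle, as Theorem~\ref{thm:projections} already carries all the analytic weight and what remains is algebraic bookkeeping. The one point demanding care is that Theorem~\ref{thm:projections}\eqref{projections:cond2} must be invoked with a $b$ that genuinely lies in $\overline{a\M a}$, so that $bpb$ lands there; this is why $b$ is chosen to vanish at $0$, and it is precisely because the hypothesis is phrased in terms of $d_\tau((a-\eps)_+)$ rather than $d_\tau(a)$ — equivalently, because we are comparing against $(a-\eps)_+$ rather than $a$ itself — that such a unitising $b$ is available in $\overline{a\M a}$ (for $a$ itself no such $b$ need exist, as the example preceding this proposition illustrates).
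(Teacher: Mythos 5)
Your proposal is correct and follows essentially the same route as the paper: apply Theorem~\ref{thm:projections} to $(a-\eps)_+$ and use part~\eqref{projections:cond2} with a unitising element of the form $\eta_{\cdot,\eps}(a)\in\overline{a\M a}$ (the paper takes $\eta_{0,\eps}(a)$ where you take $\eta_{\eps/2,\eps}(a)$, an immaterial difference) to conclude $p=bpb\in\overline{a\M a}$ and $p\precsim a$.
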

\begin{proof}
We have $\eta_{0,\eps}(a)  \triangleright (a-\eps)_+$. 
Therefore, by Theorem \ref{thm:projections}, there exists a projection $p \in \M$ such $\tau(p) = f(\tau)$ for all $\tau \in X$ and $\eta_{0,\eps}(a) \triangleright p$.  
Since $\eta_{0,\eps}(a) \triangleright p$, we have $p \in \overline{a\M a}$ and $p \precsim a$. 
\end{proof}

We now consider lower bounds on the Cuntz equivalence class of the projections constructed by Theorem \ref{thm:projections}.
\begin{proposition}\label{prop:projections-above}
Let $(\M, X)$ be a type II$_1$ factorial tracially complete C$^*$-algebra with CPoU. Let $a \in \M_{+,1}$. 
For any continuous affine function $g:X \rightarrow [0,1]$ with $d_\tau(a) \leq g(\tau)$ for all $\tau \in X$. 
There exists a projection $p \in \M$ with $\tau(p) = g(\tau)$ for all $\tau \in X$ and $a \precsim p$. 
\end{proposition}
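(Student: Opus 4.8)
The plan is to dominate $a$ from above by a projection by first finding, for each small $\eps>0$, a projection that dominates $(a-\eps)_+$, and then passing to a limit. First I would fix $\eps>0$ and consider the element $\eta_{\eps/2,\eps}(a)$, which satisfies $\eta_{\eps/2,\eps}(a)\triangleright(a-\eps)_+$ and also $(a-\eps/2)_+\triangleright\eta_{\eps/2,\eps}(a)$, so that $d_\tau(\eta_{\eps/2,\eps}(a))\le d_\tau((a-\eps/2)_+)\le d_\tau(a)\le g(\tau)$ for all $\tau\in X$. Since $g$ is a continuous affine function bounded by $1$ and the rank function $\rho\mapsto d_\rho(\eta_{\eps/2,\eps}(a))$ is dominated by $g$ everywhere on $X$, I would like to apply Theorem \ref{thm:projections} to the element $b_\eps:=1_\M-\eta_{\eps/2,\eps}(a)$ (or rather to a positive contraction whose support projection is $1-s$, where $s$ is the support projection of $\eta_{\eps/2,\eps}(a)$ in the relevant fibres) together with the affine function $g':=g-\widehat{d(\eta_{\eps/2,\eps}(a))}$; but the cleaner route is to build the complementary projection directly. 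Concretely, applying Theorem \ref{thm:projections} to a positive contraction $c\in\M_+$ chosen with $1-\eta_{\eps/2,\eps}(a)\triangleright c$ (for instance $c=1-\eta_{0,\eps/2}(a)$) and to any continuous affine $h:X\to[0,1]$ with $h(\tau)\le d_\tau(c)=1-d_\tau(\eta_{0,\eps/2}(a))$ and $h(\tau)=g(\tau)-d_\tau(\eta_{\eps/2,\eps}(a))$ — such an $h$ exists because $d_\tau(\eta_{\eps/2,\eps}(a))\le g(\tau)$ and, using $1-d_\tau(\eta_{0,\eps/2}(a))\ge 1-d_\tau(a)\ge 1-g(\tau)$ when $g\le 1$, one checks $0\le g(\tau)-d_\tau(\eta_{\eps/2,\eps}(a))\le d_\tau(c)$ — yields a projection $r_\eps$ with $r_\eps$ orthogonal to the support of $\eta_{\eps/2,\eps}(a)$ (by condition \eqref{projections:cond2}, since $c\triangleright r_\eps$ and $c\eta_{\eps/2,\eps}(a)=0$) and $\tau(r_\eps)=g(\tau)-d_\tau(\eta_{\eps/2,\eps}(a))$.

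Then I would set $p_\eps$ to be a projection Murray--von Neumann equivalent to the support projection of $\eta_{\eps/2,\eps}(a)$ added (orthogonally) to $r_\eps$; more directly, apply Proposition \ref{prop:projections-below} with the affine function $f_\eps(\tau):=d_\tau(\eta_{\eps/2,\eps}(a))$, which satisfies $f_\eps(\tau)\le d_\tau((a-\eps/2)_+)$, to obtain a projection $p_\eps'\in\overline{a\M a}$ with $\tau(p_\eps')=d_\tau(\eta_{\eps/2,\eps}(a))$, and observe $(a-\eps)_+\precsim\eta_{\eps/2,\eps}(a)$ has the same rank function as $p_\eps'$, so $(a-\eps)_+\precsim p_\eps'$ since both lie in $\overline{a\M a}$ and tracially complete type II$_1$ algebras with CPoU have strict comparison — wait, that is Theorem \ref{thm:main-strict-comparison}, which is presumably proved after this proposition, so I must avoid circularity. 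Instead the correct move is: $p_\eps'$ and the support projection of $(a-\eps)_+$ are both projections dominated in the hereditary subalgebra $\overline{\eta_{\eps/2,\eps}(a)\M\eta_{\eps/2,\eps}(a)}$, and Murray--von Neumann comparison of projections in tracially complete algebras (from \cite{TraciallyComplete}, cf.\ the projection comparison theory cited in the introduction) together with the equality of their traces gives $(a-\eps)_+\precsim p_\eps'$ directly, using only that $X$ is faithful and factorial. Now form $p_\eps:=p_\eps'\oplus r_\eps$ (orthogonal sum, legitimate since $r_\eps$ is orthogonal to the supports involved); then $\tau(p_\eps)=g(\tau)$ for all $\tau\in X$ and $(a-\eps)_+\precsim p_\eps$.

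Finally I would pass to the limit: choosing $\eps=1/n$, the projections $p_{1/n}$ all have the same trace function $g$, so a standard Murray--von Neumann comparison argument in the tracially complete setting shows they are all Murray--von Neumann equivalent, and after conjugating by suitable partial isometries (using completeness of the $\|\cdot\|$-closed unit ball in $\|\cdot\|_{2,X}$) one can arrange a single projection $p$ with $\tau(p)=g(\tau)$ for all $\tau\in X$ and $(a-1/n)_+\precsim p$ for every $n$; since $a=\lim_n(a-1/n)_+$ in norm and Cuntz subequivalence to a fixed element is preserved under such norm limits (by \cite[Proposition 2.2]{Ro92} or the standard $\eps$-cutdown stability of $\precsim$), we conclude $a\precsim p$. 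The main obstacle is the last alignment step — arranging a single $p$ that simultaneously dominates all the $(a-1/n)_+$ rather than a different $p_{1/n}$ for each $n$ — which requires care about which comparison results for projections in tracially complete C$^*$-algebras from \cite{TraciallyComplete} are available without invoking strict comparison (Theorem \ref{thm:main-strict-comparison}), since that theorem comes later; I expect the resolution is that equality of traces plus factoriality already yields Murray--von Neumann equivalence of projections via the results developed in \cite{TraciallyComplete}, and this is exactly the input the introduction refers to.
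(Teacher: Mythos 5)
Your overall strategy (build, for each $\eps$, a projection of trace $g$ dominating $(a-\eps)_+$, then use the fact that its Murray--von Neumann class is determined by tracial data to get a single $p$ with $a\precsim p$) is the right one, and your final limiting step is exactly what the paper does via \cite[Theorem 7.17]{TraciallyComplete}. However, your construction of the individual $p_\eps$ has two genuine gaps. First, the functions you feed into Theorem \ref{thm:projections} and Proposition \ref{prop:projections-below}, namely $\tau\mapsto d_\tau(\eta_{\eps/2,\eps}(a))$ and $h=g-d_{(\cdot)}(\eta_{\eps/2,\eps}(a))$, are only lower (respectively upper) semicontinuous in $\tau$; rank functions of positive elements are not continuous on $X$ in general, and both results require \emph{continuous} affine functions. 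Second, and more seriously, the step $(a-\eps)_+\precsim p_\eps'$ is not available at this stage: the ``support projection of $(a-\eps)_+$'' does not exist in $\M$ (that is precisely the difference between $\M$ and a von Neumann algebra), and the Murray--von Neumann comparison theory of \cite{TraciallyComplete} compares projections with projections only. Deducing $(a-\eps)_+\precsim p_\eps'$ from an inequality of rank functions is an instance of strict comparison of a positive element against a projection, which is the content of Theorem \ref{thm:main-strict-comparison} --- proved \emph{after}, and using, this proposition --- so your own worry about circularity is well founded and your proposed patch does not resolve it.

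The paper avoids both problems with a complementation trick that you gesture at (``build the complementary projection directly'') but do not carry out in the form that makes it work. One applies Theorem \ref{thm:projections} to the single element $1-\eta_{0,\eps}(a)$ with the single continuous affine function $1-g$ (the required inequality $1-g(\tau)\le d_\tau(1-\eta_{0,\eps}(a))$ follows from $\tau(\eta_{0,\eps}(a))\le d_\tau(a)\le g(\tau)$), obtaining a projection $q$ with $\tau(q)=1-g(\tau)$ and $1-\eta_{\eps,2\eps}(a)\triangleright q$. Then $p=1-q$ has trace $g$ and satisfies $p\triangleright\eta_{\eps,2\eps}(a)$, so $(a-2\eps)_+\precsim\eta_{\eps,2\eps}(a)\precsim p$ comes for free from containment in $p\M p$ --- no rank matching, no decomposition into two orthogonal pieces, and no comparison of a positive element against a projection is ever needed.
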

\begin{proof}
Let $\eps > 0$. 
Then $\tau(\eta_{0,\eps}(a)) \leq d_\tau(a) \leq g(\tau)$ for all $\tau \in X$. 
Hence, $d_\tau(1 - \eta_{0,\eps}(a)) \geq \tau(1 - \eta_{0,\eps}(a)) \geq 1-g(\tau)$. 
Moreover, since $\eta_{0,\eps}(a) \triangleright \eta_{\eps,2\eps}(a)$, we have $1 - \eta_{\eps,2\eps}(a) \triangleright 1 - \eta_{0,\eps}(a)$.

By Theorem \ref{thm:projections}, there exists a projection $q \in \M$ with $\tau(q) = 1 - g(\tau)$ for all $\tau \in X$ and $1 - \eta_{\eps,2\eps}(a) \triangleright q$. 
Set $p = 1-q$. Then $\tau(p) = g(\tau)$ for all $\tau \in X$ and $p \triangleright \eta_{\eps,2\eps}(a)$.
It follows that $(a-2\eps)_+ \precsim \eta_{\eps,2\eps}(a) \precsim p$.

The projection $p$ constructed above may depend on $\eps$. However, by \cite[Theorem 7.17]{TraciallyComplete} the Murray--von Neumann equivalence class of $p$ does not, as it is completely determined by tracial data. Hence, we have $(a-2\eps)_+ \precsim p$ for all $\eps > 0$, which implies $a \precsim p$.
\end{proof}

Combining Proposition \ref{prop:projections-above} and Proposition \ref{prop:projections-below}, we obtain the following theorem.
\begin{theorem}\label{thm:Cuntz-dense}
	Let $(\M, X)$ be a type II$_1$ factorial tracially complete C$^*$-algebra with CPoU. Let $a \in \M_{+,1}$. 
	For any $\eps > 0$, there exists a projection $p \in \M$ with $(a-\eps)_+ \precsim p \precsim a$.
\end{theorem}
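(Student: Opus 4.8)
The plan is to build a single continuous affine function $f\colon X\to[0,1]$ sandwiched between the rank functions $d_\tau((a-\eps)_+)$ and $d_\tau((a-\eps_1)_+)$ for a suitably chosen $\eps_1\in(0,\eps)$, to feed $f$ into both Proposition~\ref{prop:projections-below} and Proposition~\ref{prop:projections-above}, and then to identify the two resulting projections via \cite[Theorem~7.17]{TraciallyComplete}. First I would dispose of the trivial case $\eps\geq 1$, where $(a-\eps)_+=0$ and $p=0$ works; so assume $\eps<1$ and fix reals $0<\eps_1<\eps_2<\eps$. Set $f(\tau)=\tau(\eta_{\eps_1,\eps_2}(a))$ for $\tau\in X$; this is a continuous affine function $X\to[0,1]$, continuity coming from the weak$^*$ topology and the range being correct because $0\leq\eta_{\eps_1,\eps_2}(a)\leq 1$. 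The key estimate is
\[ d_\tau((a-\eps)_+)\ \leq\ f(\tau)\ \leq\ d_\tau((a-\eps_1)_+),\qquad \tau\in X. \]

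For the left inequality I would use the scalar bound $(t-\eps)_+^{1/n}\leq\eta_{\eps_1,\eps_2}(t)$ for all $t\in[0,1]$ and $n\in\N$: on $[0,\eps]$ the left side vanishes, and on $(\eps,1]$ it is a contraction while $\eta_{\eps_1,\eps_2}(t)=1$ since $t>\eps>\eps_2$. Applying functional calculus at $a$ and then $\tau$ gives $\tau((a-\eps)_+^{1/n})\leq f(\tau)$, and letting $n\to\infty$ yields $d_\tau((a-\eps)_+)\leq f(\tau)$. For the right inequality, $\eta_{\eps_1,\eps_2}(a)$ and $(a-\eps_1)_+$ generate the same hereditary subalgebra and are therefore Cuntz equivalent, so $d_\tau(\eta_{\eps_1,\eps_2}(a))=d_\tau((a-\eps_1)_+)$; and since $\eta_{\eps_1,\eps_2}(a)$ is a positive contraction we have $f(\tau)=\tau(\eta_{\eps_1,\eps_2}(a))\leq d_\tau(\eta_{\eps_1,\eps_2}(a))$, which combines with the previous equality.

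With the estimate established, Proposition~\ref{prop:projections-below} applied with $\eps_1$ in place of $\eps$ produces a projection $p_1\in\overline{a\M a}$ with $\tau(p_1)=f(\tau)$ for all $\tau\in X$ and $p_1\precsim a$, while Proposition~\ref{prop:projections-above} applied with $(a-\eps)_+\in\M_{+,1}$ in place of $a$ and $g=f$ produces a projection $p_2\in\M$ with $\tau(p_2)=f(\tau)$ for all $\tau\in X$ and $(a-\eps)_+\precsim p_2$. Since $p_1$ and $p_2$ agree on every $\tau\in X$, \cite[Theorem~7.17]{TraciallyComplete} shows that $p_1$ and $p_2$ are Murray--von Neumann equivalent, hence Cuntz equivalent. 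Therefore $(a-\eps)_+\precsim p_2\precsim p_1\precsim a$, and $p=p_1$ is the required projection.

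I expect no serious obstacle here; the only substantive point is the sandwiching step. The rank functions at the two ends are in general merely lower semicontinuous, but there is genuine ``room'' between $(a-\eps)_+$ and $(a-\eps_1)_+$, and the continuous affine function $\tau\mapsto\tau(\eta_{\eps_1,\eps_2}(a))$ slots into that gap. The point requiring care is that the \emph{same} $f$ be admissible for both propositions -- which is exactly what forces the strict inequalities $0<\eps_1<\eps_2<\eps$ -- together with the observation that mere coincidence of traces is enough, by \cite[Theorem~7.17]{TraciallyComplete}, to let us take the ``upper'' and ``lower'' projections to be one and the same.
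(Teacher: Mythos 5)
Your argument is correct and follows essentially the same route as the paper's: both define a continuous affine function $f(\tau)=\tau(\eta_{\eps_1,\eps_2}(a))$ sandwiched between $d_\tau((a-\eps)_+)$ and $d_\tau((a-\eps_1)_+)$ (the paper takes $\eps_1=\eps/2$, $\eps_2=\eps$), feed it into Propositions~\ref{prop:projections-below} and~\ref{prop:projections-above}, and identify the two resulting projections via \cite[Theorem~7.17]{TraciallyComplete}. The only differences are cosmetic: you spell out the scalar inequality behind the sandwich and dispose of the degenerate case $\eps\geq 1$ explicitly.
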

\begin{proof}
	Define a continuous affine function $f:X \rightarrow [0,1]$ by $f(\tau) = \tau(\eta_{\eps/2, \eps}(a))$. Then
	\begin{equation}
		d_\tau((a-\eps)_+) \leq f(\tau) \leq d_\tau((a-\eps/2)_+).
	\end{equation}
	By Proposition \ref{prop:projections-below}, there exists a projection $p \in \M$ with $\tau(p) = f(\tau)$ for all $\tau \in X$ and $p \precsim a$.
	By Proposition \ref{prop:projections-above}, there exists a projection $q \in \M$ with $\tau(q) = f(\tau)$ for all $\tau \in X$ and $(a-\eps)_+ \precsim q$.
	By \cite[Theorem 7.17(ii)]{TraciallyComplete}, $p$ and $q$ are unitary equivalent. Hence, $(a-\eps)_+ \precsim p \precsim a$. 
\end{proof}
The conclusion of Theorem~\ref{thm:Cuntz-dense} could be interpreted informally as saying that $(\M,X)$ has real rank zero at the level of the Cuntz semigroup. 
Indeed, if $A$ is a C$^*$-algebra with real rank zero and $a \in A_+$, then the hereditary subalgebra $\overline{aAa}$ has an approximate unit consisting of projections, so there exists a projection $p \in A$ with $(a-\eps)_+ \precsim p \precsim a$ by \cite[Proposition 2.2]{Ro92}. 
Passing to the stabilisation, this argument shows that the Cuntz semigroup $\mathrm{Cu}(A)$ of a C$^*$-algebra with real rank zero is \emph{algebraic} in the sense of \cite[Section 5.5]{RFT18}, i.e.\ every element of $\mathrm{Cu}(A)$ is the supremum of a sequence of compact elements. 
This observation goes back to \cite[Corollary 5]{CEI08}, where it is also shown that the converse holds when $A$ has stable rank one. In this language, Theorem~\ref{thm:Cuntz-dense} shows that $\mathrm{Cu}(\M)$ is algebraic. 

\section{The main theorems}

We now have everything we need to prove the main theorems of the paper. We first establish Theorem~\ref{thm:main-strict-comparison} from the introduction. 

\begin{theorem}[Theorem~\ref{thm:main-strict-comparison}]
    Let $(\M, X)$ be a type II$_1$ factorial tracially complete C$^*$-algebra with complemented partitions of unity. Then $\M$ has strict comparison with respect to the traces in $X$.
\end{theorem}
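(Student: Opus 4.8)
The plan is to reduce the statement to a statement about matrix amplifications via Lemma~\ref{lem:only-matrix}, and then use Theorem~\ref{thm:Cuntz-dense} to replace positive elements by projections, at which point the Murray--von Neumann comparison theory for projections from \cite{TraciallyComplete} finishes the job. First, note that a matrix amplification $(\M \otimes M_n, X)$ (with the traces extended in the obvious way) is again a type II$_1$ factorial tracially complete C$^*$-algebra with CPoU, so Theorem~\ref{thm:Cuntz-dense} applies to it; hence by Lemma~\ref{lem:only-matrix} it suffices to show: for $a, b \in (\M \otimes M_n)_+$ and $\gamma > 0$ with $d_\tau(a) \leq (1-\gamma) d_\tau(b)$ for all $\tau \in X$, we have $a \precsim b$.

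Next I would run a standard cut-down argument. Fix $\eps > 0$; by Lemma~\ref{lem:cut-down} there is $\delta > 0$ with $d_\tau((a-\eps)_+) \leq (1 - \tfrac{\gamma}{2}) d_\tau((b-\delta)_+)$ for all $\tau \in X$. Apply Theorem~\ref{thm:Cuntz-dense} to (a suitably rescaled) $a$ to obtain a projection $p \in \M \otimes M_n$ with $(a-\eps)_+ \precsim p \precsim (a - \eps/2)_+$, and to $b$ to obtain a projection $q$ with $(b-\delta)_+ \precsim q \precsim b$. For any $\tau \in X$ we then have $\tau(p) = d_\tau(p) \leq d_\tau((a-\eps/2)_+) \leq d_\tau(a) \leq (1-\gamma) d_\tau(b)$, while $d_\tau(q) \geq d_\tau((b-\delta)_+)$; combining with the cut-down inequality this gives $\tau(p) \leq d_\tau((a - \eps/2)_+) \leq d_\tau(a) \leq (1-\gamma) d_\tau(b)$, and more to the point $d_\tau(p) \leq (1-\tfrac\gamma2) d_\tau(q) \leq (1 - \tfrac\gamma2)\tau(q) \leq \tau(q)$, with strict inequality wherever $\tau(q) > 0$; where $\tau(q) = 0$ we have $d_\tau((b-\delta)_+) = 0$ and hence $d_\tau((a-\eps)_+)=0$, forcing $\tau(p) = 0$ as well. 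So $\tau(p) \leq \tau(q)$ for all $\tau \in X$. Now invoke the projection comparison theorem of \cite{TraciallyComplete} (the relevant statement being that in a factorial tracially complete C$^*$-algebra, $\tau(p) \leq \tau(q)$ for all $\tau \in X$ implies $p \precsim q$ as Murray--von Neumann subprojections — using \cite[Theorem~7.17]{TraciallyComplete}, possibly after splitting off the part where the inequality is an equality and the part where it is strict, the latter handled via the type II$_1$ structure and semicontinuity of the trace): this yields $p \precsim q$ in $\M \otimes M_n$. Chaining the Cuntz subequivalences gives $(a-\eps)_+ \precsim p \precsim q \precsim b$, and since $\eps > 0$ was arbitrary, $a \precsim b$.

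The main obstacle I anticipate is the precise form in which the projection comparison result from \cite{TraciallyComplete} is available: Murray--von Neumann comparison for projections typically gives $p \precsim q$ from a \emph{strict} tracial inequality on a compact set (or from an equality), but here the hypothesis $\tau(p) \leq \tau(q)$ may be an equality on part of $X$ and strict on another part, with the zero set of $q$ needing separate attention. I would handle this by using the $\tfrac\gamma2$ slack: since $d_\tau(p) \leq (1 - \tfrac\gamma2) d_\tau(q)$ for all $\tau \in X$ with a uniform gap, one can find (via Proposition~\ref{prop:projections-above}/\ref{prop:projections-below} or directly from \cite[Theorem~7.17]{TraciallyComplete}) a projection $q' \leq q$ with $\tau(q') = \tfrac{1}{2}(\tau(p) + (1-\tfrac\gamma2)\tau(q)) \geq \tau(p)$ and indeed $\tau(q') \geq \tau(p)$ with room to spare, reducing to the comparison statement in the form actually proved there. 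Alternatively, one packages everything into a single application of \cite[Theorem~7.17]{TraciallyComplete} after verifying the tracial hypotheses carefully; the bookkeeping around the degenerate fibres (where $q$ or $p$ has trace zero) is the only delicate point, and Theorem~\ref{thm:Cuntz-dense} together with the remark following Proposition~\ref{prop:close-projection} already anticipates it.
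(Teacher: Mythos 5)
Your proposal follows essentially the same route as the paper: reduce to matrix amplifications via Lemma~\ref{lem:only-matrix}, cut down via Lemma~\ref{lem:cut-down}, replace $(a-\eps)_+$ and $(b-\delta)_+$ by projections via Theorem~\ref{thm:Cuntz-dense}, compare those projections tracially using \cite[Theorem~7.17(i)]{TraciallyComplete}, and let $\eps \to 0$. Two small points: your choice $(a-\eps)_+ \precsim p \precsim (a-\eps/2)_+$ is mismatched with your cut-down inequality, which bounds $d_\tau((a-\eps)_+)$ rather than $d_\tau((a-\eps/2)_+)$, so the asserted $d_\tau(p) \leq (1-\tfrac{\gamma}{2})d_\tau(q)$ does not follow as written; the paper instead takes $(a-2\eps)_+ \precsim p \precsim (a-\eps)_+$, whence $\tau(p) \leq d_\tau((a-\eps)_+) \leq (1-\tfrac{\gamma}{2})d_\tau((b-\delta)_+) \leq \tau(q)$ goes through directly and $(a-2\eps)_+ \precsim b$ still suffices since $\eps$ is arbitrary (alternatively, apply the cut-down lemma at level $\eps/2$). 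Finally, the worry in your last paragraph is unnecessary: \cite[Theorem~7.17(i)]{TraciallyComplete} is invoked in the paper with the non-strict hypothesis $\tau(p) \leq \tau(q)$ for all $\tau \in X$, so no slack manipulation or case analysis on degenerate fibres is needed.
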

\begin{proof}
	Let $(\M, X)$ be a type II$_1$ factorial tracially complete C$^*$-algebra with CPoU.
    By the definition of a tracially complete C$^*$-algebra, $X$ is a closed subset of $T(\M)$. 
    Hence, $X$ is a compact subset of $\QT(\M)$, so strict comparison with respect to $X$ is well defined (see Section \ref{subsec:strict-comparison}).
        
	By Lemma \ref{lem:only-matrix}, it suffices to prove strict comparison with respect to $X$ for elements  $a, b \in (\M \otimes M_n)_+$ for all $n \in N$. 
	Moreover, since $(\M \otimes M_n, X)$ is also a type II$_1$ tracially complete C$^*$-algebra with CPoU by \cite[Proposition 6.14]{TraciallyComplete}, 
        it suffices to prove strict comparison with respect to $X$ for elements  $a, b \in \M$.
	
	Let $a,b \in \M$ and $\gamma > 0$. Suppose that $d_\tau(a) \leq (1-\gamma)d_\tau(b)$ for all $\tau \in K$.
	Let $\eps > 0$. By Lemma \ref{lem:cut-down}, there is $\delta > 0$ such that $d_\tau((a-\eps)_+) \leq (1-\frac{\gamma}{2})d_\tau((b-\delta)_+)$ for all $\tau \in K$.
	
	By Theorem \ref{thm:Cuntz-dense}, there are projections $p, q \in \M $ such that $(a-2\eps)_+ \precsim p \precsim (a-\eps)_+$ and $(b-\delta)_+ \precsim q \precsim b$.
	We have 
        \begin{equation}
            \tau(p) \leq d_\tau((a-\eps)_+) \leq \left(1-\frac{\gamma}{2}\right)d_\tau((b-\delta)_+) \leq \tau(q)
        \end{equation} for all $\tau \in K$.
	Hence, $p$ is Murray--von Neumann subequivalent to $q$ by \cite[Theorem 7.17(i)]{TraciallyComplete}.
	Therefore,  $(a-2\eps)_+ \precsim p \precsim q \precsim b$.
	Since $\eps$ was arbitrary, $a \precsim b$.
\end{proof}

Next, we establish Theorem~\ref{thm:main-TC} of the introduction. We isolate the Choquet theory and the application of Lemma \ref{lem:K-to-all} (due to Ng and Roberts \cite{NgR16}) in the following proposition, as it may be of independent interest.  We also observe that all quasitraces are traces in this case.

\begin{proposition}\label{prop:main-choquet-argument}
    Let $(\M, X)$ be a type II$_1$ factorial tracially complete C$^*$-algebra. Suppose $\M$ has strict comparison with respect to the traces in $X$. Then $T(\M) = X$. Moreover, all lower semi-continuous 2-quasitraces on $\M$ are additive. 
\end{proposition}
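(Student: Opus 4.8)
The plan is to derive both assertions from Ng and Robert's Lemma~\ref{lem:K-to-all} by feeding it a cleverly chosen pair of positive elements in each case; the only structural input needed about $(\M, X)$, beyond the hypothesised strict comparison with respect to $X$, is that $X$ is a closed face of the Choquet simplex $T(\M)$, and hence a compact subset of $\QT(\M)$ (using that the weak$^*$ topology and the $\QT(\M)$-subspace topology agree on $T(\M)$, so that the hypotheses of Lemma~\ref{lem:K-to-all} are met). Since $X \subseteq T(\M)$ is part of the definition of a tracially complete C$^*$-algebra, what remains is to prove $T(\M) \subseteq X$ together with additivity of every quasitrace.

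For $T(\M) \subseteq X$, I would argue by contradiction. Suppose $\tau_0 \in T(\M) \setminus X$. As $\M$ is unital, $T(\M)$ is a Choquet simplex, and as $(\M,X)$ is factorial, $X$ is a closed face of it, so Proposition~\ref{prop:exposed} supplies $g \in \mathrm{Aff}(T(\M))_+$ with $g|_X \equiv 0$ and $g(\tau_0) > 0$. Applying the positive case of Proposition~\ref{prop:CP} to the strictly positive affine function $g + 1$ yields $a \in \M_+$ with $\tau(a) = g(\tau) + 1$ for all $\tau \in T(\M)$. Then $\tau(a) = 1 = \tau(1_\M)$ for every $\tau \in X$, so Lemma~\ref{lem:K-to-all}(2) forces $\sigma(a) \le \sigma(1_\M)$ for every $\sigma \in \QT(\M)$; in particular $g(\tau_0) + 1 = \tau_0(a) \le \tau_0(1_\M) = 1$, which is absurd. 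Hence $T(\M) = X$.

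For additivity, fix $\sigma \in \QT(\M)$ and $a,b \in \M_+$, and compare the positive elements $a+b \in \M_+$ and $\mathrm{diag}(a,b) \in (\M \otimes M_2)_+$ inside $(\M \otimes \K)_+$. Each $\tau \in X$ is an honest trace, so, using the canonical extension of $\tau$ to matrix amplifications, $\tau(a+b) = \tau(a)+\tau(b) = \tau(\mathrm{diag}(a,b))$. Applying Lemma~\ref{lem:K-to-all}(2) to each of the inequalities $\tau(a+b) \le \tau(\mathrm{diag}(a,b))$ and $\tau(\mathrm{diag}(a,b)) \le \tau(a+b)$, which hold on $X$, propagates the equality to all of $\QT(\M)$, so $\sigma(a+b) = \sigma(\mathrm{diag}(a,b))$. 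Since $\mathrm{diag}(a,b) = \mathrm{diag}(a,0) + \mathrm{diag}(0,b)$ is a sum of orthogonal positive elements, on which any quasitrace is additive, the right-hand side equals $\sigma(a) + \sigma(b)$; thus $\sigma(a+b) = \sigma(a)+\sigma(b)$, and every lower semi-continuous $2$-quasitrace on $\M$ is additive.

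Neither step involves a quantitative estimate. I expect the main obstacle to be conceptual rather than technical: spotting the right comparison elements — a scalar multiple of the unit for the trace-space statement and a $2\times 2$ amplification for the additivity statement — and then carrying out the routine but essential check that $X$ satisfies the compactness hypothesis of Lemma~\ref{lem:K-to-all}. A minor point to handle with care is that the positive version of Proposition~\ref{prop:CP} is only available because $g+1$ is bounded below away from $0$, which is why one perturbs by a constant rather than working with $g$ directly.
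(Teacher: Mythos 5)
Your proof of $T(\M)=X$ is essentially the paper's argument: the paper likewise exposes $X$ by a non-negative continuous affine function vanishing on $X$ (Proposition \ref{prop:exposed}), realises $f+\eps$ by a positive element via Proposition \ref{prop:CP}, and pushes the inequality $\tau(a)\le\tau(\eps 1_\M)$ from $X$ to all of $\QT(\M)$ with Lemma \ref{lem:K-to-all}(2); your choice $\eps=1$ is an inessential specialisation, and your check that $X$ is a compact subset of $\QT(\M)$ matches the paper's. Where you genuinely diverge is the additivity of quasitraces: the paper observes that strict comparison with respect to $X$ implies strict comparison with respect to the larger cone of all lower semicontinuous traces and then cites \cite[Theorem 3.6(i)]{NgR16} as a black box, whereas you rederive the statement directly by comparing $a+b$ with $\mathrm{diag}(a,b)$ in $(\M\otimes\K)_+$ and applying Lemma \ref{lem:K-to-all}(2) in both directions. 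Your argument is correct: every $\tau\in X$ is an honest trace, so its canonical extension satisfies $\tau(a+b)=\tau(\mathrm{diag}(a,b))$; any $2$-quasitrace is additive on the orthogonal (hence commuting) elements $a\otimes e_{11}$ and $b\otimes e_{22}$; and $\sigma(b\otimes e_{22})=\sigma(b\otimes e_{11})=\sigma(b)$ by the tracial property $\sigma(x^*x)=\sigma(xx^*)$ together with the corner normalisation of the canonical extension. In effect you have reproved the relevant case of Ng--Robert's theorem; this buys a self-contained proof at the cost of having to verify these standard but nontrivial facts about extensions of $2$-quasitraces to matrix amplifications.
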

\begin{proof}
	Suppose there exists some $\sigma \in T(\M) \setminus X$.
	As $X$ is a closed face in the Choquet simplex $T(\M)$, there is a continuous affine function $f:T(\M) \rightarrow [0,1]$ such that $f|_X = 0$ but $f(\sigma) > 0$ by Proposition \ref{prop:exposed}.
	
    Let $\eps > 0$.
	By Proposition \ref{prop:CP}, there exists $a \in \M_+$ such that $\tau(a) = f(\tau) + \eps$ for all $\tau \in T(\M)$.
	Hence, $\tau(a) = \epsilon$ for all $\tau \in X$, as $f|_X = 0$.
	In particular, we have $\tau(a) \leq \tau(\eps 1_\M)$ for all $\tau \in X$.
	By Theorem \ref{thm:main-strict-comparison}, $\M$ has strict comparison with respect to $X$.
	Hence, by Lemma \ref{lem:K-to-all}, $\tau(a) \leq \tau(\eps 1_M)$ for all $\tau \in \QT(\M)$.
	Since $\sigma \in T(\M)$, we have $f(\sigma) + \epsilon = \sigma(a) \leq \sigma(\eps 1_M) = \eps$.
	Thus $f(\sigma) \leq 0$, and we have arrived at a contradiction. Therefore, $T(\M) = X$.

    Since $\M$ has strict comparison with respect to $X$, it also has strict comparison with respect to the (larger) set of all lower semi-continuous extended traces on $\M$. Hence, all lower semi-continuous 2-quasitraces on $\M$ are additive by \cite[Theorem 3.6(i)]{NgR16}. 
\end{proof}

\begin{corollary}[Theorem~\ref{thm:main-TC}]
    Let $(\M, X)$ be a type II$_1$ factorial tracially complete C$^*$-algebra with complemented partitions of unity. Then $T(\M) = X$.
\end{corollary}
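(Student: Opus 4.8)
The plan is to obtain this corollary as an immediate consequence of the two results established immediately above. First I would apply Theorem~\ref{thm:main-strict-comparison}: since $(\M, X)$ is a type II$_1$ factorial tracially complete C$^*$-algebra with complemented partitions of unity, $\M$ has strict comparison with respect to the traces in $X$. This is the substantive input, and all of the real work has already gone into reaching it: the fibrewise projection lemma in each von Neumann algebra $\pi_\tau(\M)''$ (Proposition~\ref{prop:close-projection}), the CPoU gluing that produces approximate projections inside a fixed $\|\cdot\|$-closed hereditary subalgebra $\overline{a\M a}$ while staying close to a previously chosen one (Lemma~\ref{lem:onestep}), their assembly into genuine projections using tracial completeness (Theorem~\ref{thm:projections}), and the resulting ``Cuntz real rank zero'' statement $(a-\eps)_+ \precsim p \precsim a$ (Theorem~\ref{thm:Cuntz-dense}).

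With strict comparison with respect to $X$ in hand, I would then invoke Proposition~\ref{prop:main-choquet-argument}, whose hypotheses are now exactly satisfied: $(\M, X)$ is type II$_1$ factorial tracially complete and $\M$ has strict comparison with respect to the traces in $X$. Its conclusion is precisely $T(\M) = X$, so the proof is complete. As a bonus, Proposition~\ref{prop:main-choquet-argument} also yields that every lower semicontinuous 2-quasitrace on $\M$ is additive, although this is not part of the statement at hand.

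The main obstacle in this last step is essentially nonexistent — it is a two-line combination of results proven just above. If I were instead reconstructing the whole argument, the genuine difficulty would sit one level up, in proving strict comparison with respect to $X$ without being able to assume that projections have $\|\cdot\|$-dense linear span in $\M$. The crucial device, already exploited above, is that although a CPoU argument only controls its output in the $\|\cdot\|_{2,X}$-seminorm, running such an argument inside a fixed $\|\cdot\|$-closed hereditary subalgebra and iterating with geometrically decaying errors produces a $\|\cdot\|_{2,X}$-Cauchy sequence whose limit is a genuine projection lying in (a slight enlargement of) that hereditary subalgebra; this is what upgrades the tracial estimates into honest Cuntz subequivalences and lets Ng--Robert's Lemma~\ref{lem:K-to-all}, together with the exposed-face argument of Proposition~\ref{prop:exposed}, force $T(\M) = X$.
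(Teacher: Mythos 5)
Your proof matches the paper's exactly: the corollary is obtained by applying Theorem~\ref{thm:main-strict-comparison} to get strict comparison with respect to $X$ and then invoking Proposition~\ref{prop:main-choquet-argument} to conclude $T(\M) = X$. The surrounding commentary correctly identifies where the real work lies, but for this statement the two-line combination is all that is needed and is precisely what the paper does.
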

\begin{proof}
    This is now an immediate consequence of Proposition~\ref{prop:main-choquet-argument} and Theorem~\ref{thm:main-strict-comparison}.
\end{proof}

We now specialise Theorem~\ref{thm:main-TC} to the case where $(\M,X)$ is a trivial W$^*$-bundle. This will prove Corollary~\ref{cor:main-bundle} from the introduction as a special case.  

\begin{proposition}\label{prop:bundles-gamma}
    Let $K$ be a compact Hausdorff space and $N$ be a II$_1$ factor. Let $C_\sigma(K, N)$ be the trivial W$^*$-bundle over $K$ with fibre $N$.
    Suppose either $K$ is totally disconnected or $N$ has property $\Gamma$.
    Then every trace $\tau \in T(C_\sigma(K, N))$ is of the form
    \begin{equation}\label{eqn:main-bundle2}
        \tau(f) = \int_K \mathrm{tr}_N(f(x)) \, d\mu(x), \quad f \in C_\sigma(K, N),
    \end{equation}
    for some Radon probability measure $\mu \in \mathrm{Prob}(K)$.
\end{proposition}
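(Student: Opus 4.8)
The plan is to deduce Proposition~\ref{prop:bundles-gamma} directly from Theorem~\ref{thm:main-TC} (in its ``Corollary'' incarnation above), once we have identified the distinguished set $X$ of traces on $\M = C_\sigma(K,N)$ with the traces coming from Radon probability measures via \eqref{eqn:main-bundle2}. First I would recall that for a trivial W$^*$-bundle $C_\sigma(K,N)$, the natural set $X$ of traces is precisely $\{\tau_\mu : \mu \in \mathrm{Prob}(K)\}$, where $\tau_\mu(f) = \int_K \mathrm{tr}_N(f(x))\,d\mu(x)$; this is a weak$^*$-closed convex subset of $T(\M)$, it is a Choquet simplex affinely homeomorphic to $\mathrm{Prob}(K)$, and $(\M,X)$ is a type II$_1$ factorial tracially complete C$^*$-algebra, with $\|f\|_{2,X} = \sup_{x\in K}\|f(x)\|_{2,\mathrm{tr}_N}$ being exactly the W$^*$-bundle $2$-norm. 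The fact that $X$ is a face of $T(\M)$ and that $\|\cdot\|_{2,X}$-completeness of the unit ball holds is part of the standard dictionary between W$^*$-bundles and tracially complete C$^*$-algebras (see \cite{Oz13, TraciallyComplete}); the type II$_1$ condition is immediate since every fibre is the II$_1$ factor $N$.

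The one genuine hypothesis to feed into Theorem~\ref{thm:main-TC} is CPoU, so the main step is: under either hypothesis (\emph{$K$ totally disconnected} or \emph{$N$ has property $\Gamma$}), the tracially complete C$^*$-algebra $(\M, X) = (C_\sigma(K,N), X)$ has complemented partitions of unity. When $N$ has property $\Gamma$, this is exactly the content of the W$^*$-bundle CPoU results of \cite{CETWW} (and its reformulation in \cite[Section~7]{TraciallyComplete}): trivial bundles with property $\Gamma$ fibre satisfy CPoU. When $K$ is totally disconnected, CPoU follows from a direct clopen-partition argument: given positive $a_1,\dots,a_k \in \M$ and $\delta > \sup_{x\in K}\min_i \mathrm{tr}_N(a_i(x))$, one covers $K$ by clopen sets $V_1,\dots,V_k$ with $\mathrm{tr}_N(a_i(x)) \le \delta$ for $x\in V_i$, refines to a clopen partition, and takes $q_i$ to be the corresponding central projections $1_{V_i}\cdot 1_{\M} \in \M \subseteq \M^\omega$ (which automatically commute with everything and lie in $S'$). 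I would therefore split the proof into these two cases, citing \cite{CETWW}/\cite{TraciallyComplete} in the property $\Gamma$ case and giving the short clopen-partition argument in the totally disconnected case.

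With CPoU in hand, Theorem~\ref{thm:main-TC} yields $T(\M) = X$, which is precisely the assertion that every $\tau \in T(C_\sigma(K,N))$ equals $\tau_\mu$ for some $\mu\in\mathrm{Prob}(K)$, i.e.\ has the form \eqref{eqn:main-bundle2}. I expect the main obstacle to be purely bookkeeping rather than mathematical: verifying cleanly that $(\M,X)$ meets the precise definition of a type II$_1$ factorial tracially complete C$^*$-algebra (Definition~\ref{def:TC}) and that the set $X$ identified above is genuinely all of the ``bundle traces'', so that $T(\M)=X$ says what we want. There is a subtlety worth a sentence of care — that a priori $T(\M)$ could be strictly larger than $X$ (this is the whole point), and that the identification of $X$ with $\mathrm{Prob}(K)$ must be the affine homeomorphism induced by $\mu \mapsto \tau_\mu$ — but no hard analysis is needed once the framework is set up and CPoU is established.

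\begin{proof}
Let $\M = C_\sigma(K, N)$ and let $X = \{\tau_\mu : \mu \in \mathrm{Prob}(K)\} \subseteq T(\M)$, where $\tau_\mu(f) = \int_K \mathrm{tr}_N(f(x))\,d\mu(x)$ for $f \in \M$. The map $\mu \mapsto \tau_\mu$ is an affine homeomorphism from $\mathrm{Prob}(K)$ onto $X$, so $X$ is a weak$^*$-compact convex subset of $T(\M)$; moreover $X$ is a face of $T(\M)$ because $N$ is a factor (see \cite{Oz13, TraciallyComplete}). One has $\|f\|_{2,X} = \sup_{x \in K}\|f(x)\|_{2,\mathrm{tr}_N}$, which is a norm on $\M$ with respect to which the $\|\cdot\|$-closed unit ball of $\M$ is complete, by the defining properties of the trivial W$^*$-bundle. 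Hence $(\M, X)$ is a factorial tracially complete C$^*$-algebra, and it is type II$_1$ since each fibre $N$ is a type II$_1$ factor.

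We claim $(\M, X)$ has CPoU. If $N$ has property $\Gamma$, this is the W$^*$-bundle case of \cite[Theorem~I]{CETWW} (see also \cite[Section~7]{TraciallyComplete}). Now suppose $K$ is totally disconnected. Let $S \subseteq \M$ be $\|\cdot\|_{2,X}$-separable, let $a_1, \dots, a_k \in \M_+$, and let $\delta > \sup_{\tau \in X}\min_{1 \leq i \leq k}\tau(a_i)$. Evaluating traces at point masses, $\sup_{x \in K}\min_{1 \leq i \leq k}\mathrm{tr}_N(a_i(x)) < \delta$, so for each $x \in K$ there is $i(x)$ with $\mathrm{tr}_N(a_{i(x)}(x)) < \delta$; by $\|\cdot\|$-norm continuity of $x \mapsto a_i(x)$ and total disconnectedness of $K$, there is a clopen neighbourhood $V_x$ of $x$ with $\mathrm{tr}_N(a_{i(x)}(y)) < \delta$ for all $y \in V_x$. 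By compactness, finitely many such clopen sets cover $K$; refining, we obtain a partition of $K$ into clopen sets and, grouping, clopen sets $W_1, \dots, W_k$ (some possibly empty) partitioning $K$ with $\mathrm{tr}_N(a_i(y)) < \delta$ for all $y \in W_i$. Let $q_i = 1_{W_i} \cdot 1_{\M} \in \M$, viewed inside $\M^\omega$ as constant sequences. These are orthogonal projections summing to $1_{\M^\omega}$, they are central in $\M$ and hence lie in $\M^\omega \cap S'$, and for every $\tau = \tau_\mu \in X$,
\begin{equation}
    \tau(a_i q_i) = \int_{W_i} \mathrm{tr}_N(a_i(y))\,d\mu(y) \leq \delta\,\mu(W_i) = \delta\tau(q_i).
\end{equation}
The same inequality passes to limit traces and hence to all $\tau \in X^\omega$ by weak$^*$-density, since $q_i \in \M$. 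Thus $(\M, X)$ has CPoU.

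By Theorem~\ref{thm:main-TC}, $T(\M) = X$. Hence every trace $\tau \in T(C_\sigma(K, N))$ equals $\tau_\mu$ for some Radon probability measure $\mu \in \mathrm{Prob}(K)$, i.e.\ is of the form \eqref{eqn:main-bundle2}.
\end{proof}
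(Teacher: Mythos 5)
Your proof is correct and follows essentially the same route as the paper: identify $X$ with the set of bundle traces $\tau_\mu$, verify that $(\M,X)$ is a type II$_1$ factorial tracially complete C$^*$-algebra, establish CPoU separately in the two cases, and invoke Theorem~\ref{thm:main-TC}. The only cosmetic difference is that in the totally disconnected case you prove CPoU directly by a clopen-partition argument where the paper simply cites \cite[Proposition~6.3]{TraciallyComplete} via $\partial_e X \cong K$; your argument is sound and amounts to inlining that citation.
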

\begin{proof}
Set $\M = C_\sigma(K, N)$. 
Let $X \subseteq T(\M)$ be the set of all traces of the form \eqref{eqn:main-bundle2}. 
Then $(\M,X)$ is a tracially complete C$^*$-algebra by \cite[Proposition 3.6]{TraciallyComplete} and is factorial by \cite[Proposition 3.14]{TraciallyComplete} since $\pi_\tau(\M)'' \cong N$ is a factor for each $\tau \in \partial_e X$. 

The C$^*$-algebra $\M$ contains a unital copy of $N$ (namely the constant functions), which in turn contains a unital copy of the matrix algebra $M_n$ for every $n \in \N$ (as $N$ is a II$_1$ factor). Therefore, the type I part of $\pi_\tau(\M)''$ must be zero for every $\tau \in X$. Hence, $(\M,X)$ is type II$_1$. 

Suppose the II$_1$ factor $N$ has property $\Gamma$. Then the tracially complete C$^*$-algebra $(\M,X)$ has property $\Gamma$. Indeed, an approximately central net of projections of trace $\tfrac{1}{2}$ in $N$, when viewed as a constant functions, will be an approximately central net of projection of uniform trace $\tfrac{1}{2}$ in $\M$. Therefore, $(\M,X)$ has CPoU by \cite[Theorem 1.4]{TraciallyComplete}. Suppose instead $K$ is totally disconnected. Then since $\partial_e X \cong K$, we see that $(\M,X)$ has CPoU by \cite[Proposition~6.3]{TraciallyComplete}.

In both cases, Theorem~\ref{thm:main-TC} applies and we have $T(\M) = X$.  
\end{proof}
\begin{corollary}[Corollary~\ref{cor:main-bundle}]
    Every trace on the trivial W$^*$-bundle $C_\sigma(K, \RR)$ is of the form \eqref{eqn:main-bundle1}.
\end{corollary}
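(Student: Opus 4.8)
The plan is to deduce this immediately from Proposition~\ref{prop:bundles-gamma} by specialising the fibre $N$ to the hyperfinite II$_1$ factor $\RR$. The only hypothesis of that proposition requiring verification is that $\RR$ has property $\Gamma$, and this is classical: $\RR$ is the original and prototypical example of a property $\Gamma$ factor, the property being witnessed, for instance, by approximately central sequences of trace-$\tfrac{1}{2}$ projections arising from the tensor-product decomposition $\RR \cong \bigotimes_{n \geq 1} M_2$. With this in hand, the second alternative in the hypothesis of Proposition~\ref{prop:bundles-gamma} is satisfied.

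Granting that, Proposition~\ref{prop:bundles-gamma} gives that every $\tau \in T(C_\sigma(K, \RR))$ has the form \eqref{eqn:main-bundle2} with $N = \RR$, which is precisely the claimed form \eqref{eqn:main-bundle1}. This completes the argument.

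I do not expect any genuine obstacle at this final step: all of the substantive work — strict comparison (Theorem~\ref{thm:main-strict-comparison}), the Cuntz-semigroup density statement (Theorem~\ref{thm:Cuntz-dense}), the Choquet-theoretic passage from $X$ to all of $T(\M)$ (Proposition~\ref{prop:main-choquet-argument}), and the specialisation to trivial W$^*$-bundles (Proposition~\ref{prop:bundles-gamma}) — has already been carried out upstream. The one point to keep in mind is that $K$ here is an \emph{arbitrary} compact Hausdorff space, so one cannot in general route through the totally disconnected case of Proposition~\ref{prop:bundles-gamma}; it is property $\Gamma$ of $\RR$ that delivers the conclusion in full generality.
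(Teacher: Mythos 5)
Your proposal is correct and follows exactly the paper's route: the corollary is an immediate specialisation of Proposition~\ref{prop:bundles-gamma} with $N = \RR$, using the classical fact that $\RR$ has property $\Gamma$. Nothing further is needed.
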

\begin{proof}
    This is now an immediate consequence of Proposition~\ref{prop:bundles-gamma} since $\RR$ has property $\Gamma$.
\end{proof}

Finally, we specialise Theorem~\ref{thm:main-TC} to the case of uniform tracial completions of C$^*$-algebras to prove Theorem \ref{thm:main} from the introduction.
\begin{theorem}[Theorem~\ref{thm:main}]
    Let $A$ be a C$^*$-algebra with $T(A)$ compact and non-empty. Suppose $A$ absorbs the Jiang--Su algebra $\Z$ tensorially, i.e.\ $A \otimes \Z \cong A$. Then the trace problem has a positive solution. 
\end{theorem}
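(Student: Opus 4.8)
The plan is to recognise the pair $\bigl(\completion{A}{T(A)},\, T(A)\bigr)$ as a type II$_1$ factorial tracially complete C$^*$-algebra with CPoU, and then simply invoke Theorem~\ref{thm:main-TC}. Write $\M = \completion{A}{T(A)}$ and, following Section~\ref{subsec:completion}, identify $X$ with the image of $T(A)$ inside $T(\M)$ under the $\|\cdot\|_{2,T(A)}$-continuous extension map; this map is injective, since distinct traces on $A$ already differ on $\iota(A) \subseteq \M$. By the formulation recalled at the end of Section~\ref{subsec:completion}, the trace problem has a positive solution for $A$ precisely when $T(\M) = X$, so it suffices to verify that $(\M, X)$ satisfies the hypotheses of Theorem~\ref{thm:main-TC}.

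First I would check that $(\M, X)$ is a tracially complete C$^*$-algebra in the sense of Definition~\ref{def:TC}. That $\|\cdot\|_{2,X}$ is a norm on $\M$ and that the $\|\cdot\|$-closed unit ball of $\M$ is $\|\cdot\|_{2,X}$-complete are built into the construction (Section~\ref{subsec:completion}). Unitality is the only place where compactness of $T(A)$ is really used: for an approximate unit $(u_\lambda)$ of $A$, the net $\tau \mapsto \tau(u_\lambda)$ is an increasing net of weak$^*$-continuous functions on $T(A)$ converging pointwise to the constant function $1$, hence uniformly by Dini's theorem; together with the inequality $0 \le (u_\lambda - u_\mu)^2 \le u_\lambda - u_\mu$ for $\lambda \ge \mu$, this shows $(u_\lambda)$ is $\|\cdot\|_{2,X}$-Cauchy, and its limit in $\M$ is a unit. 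Factoriality is then immediate: since $T(A)$ is compact, $X = T(A)$ is a compact face of $T(A)$, so $(\M, X)$ is factorial by the discussion after Definition~\ref{def:TC} (see \cite[Proposition~3.14]{TraciallyComplete}).

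The substantive point is that $(\M, X)$ is type II$_1$, and this is where $\Z$-stability is used beyond its role of supplying CPoU. Using the isomorphism $A \cong A \otimes \Z$, the slice maps $z \mapsto \lim_\lambda (u_\lambda \otimes z)$ --- the limit taken in $\M$, the net being $\|\cdot\|_{2,X}$-Cauchy because $\|(u_\lambda - u_\mu)\otimes z\|_{2,X} \le \|z\|\,\|u_\lambda\otimes 1 - u_\mu\otimes 1\|_{2,X}$ --- assemble into a unital injective $*$-homomorphism $\Phi:\Z \to \M$. For each $\tau \in X$, the composite $\tau \circ \Phi$ is the unique trace of $\Z$, so $\pi_\tau(\Phi(\Z))''$ is the GNS von Neumann completion of $\Z$ with respect to its trace, which is a type II$_1$ factor, sitting unitally inside the finite von Neumann algebra $\pi_\tau(\M)''$. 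A finite von Neumann algebra $M$ containing a unital copy of a type II$_1$ von Neumann algebra $N$ can have no type I summand: if $z \in M$ were a nonzero central type I$_n$ projection ($n < \infty$), then $x \mapsto zx$ would be a nonzero unital $*$-homomorphism from $N$ into the type I$_n$ algebra $zM$, whose image would be a nonzero type II$_1$ von Neumann algebra (a central reduction of $N$) unitally contained in $zM$; this is impossible, since a type II$_1$ von Neumann algebra contains a unital copy of $M_{n+1}$, whereas $M_{n+1}$ admits no unital embedding into a type I$_n$ von Neumann algebra (a rank count on $n+1$ equivalent summands of the identity). Hence $\pi_\tau(\M)''$, being finite with no type I part, is type II$_1$, so $(\M, X)$ is type II$_1$.

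Finally, $\bigl(\completion{A}{T(A)},\, T(A)\bigr)$ has CPoU by \cite[Theorem~1.4]{TraciallyComplete}, since $A$ is $\Z$-stable. All the hypotheses of Theorem~\ref{thm:main-TC} are therefore met, so $T(\M) = X$; that is, every trace on $\completion{A}{T(A)}$ is the $\|\cdot\|_{2,T(A)}$-continuous extension of a trace on $A$, which is exactly the statement that the trace problem has a positive solution. The main obstacle is not any of these routine verifications but rather the content already packed into Theorems~\ref{thm:main-strict-comparison} and~\ref{thm:main-TC}; within the deduction of Theorem~\ref{thm:main} itself, the type II$_1$ check is the only step requiring genuine thought.
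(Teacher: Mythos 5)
Your proposal is correct and follows essentially the same route as the paper: identify $(\completion{A}{T(A)}, T(A))$ as a type II$_1$ factorial tracially complete C$^*$-algebra with CPoU and invoke Theorem~\ref{thm:main-TC}. The only cosmetic difference is that you verify unitality, factoriality and the type II$_1$ condition by hand (Dini's theorem, the compact-face observation, and an explicit unital embedding of $\mathcal{Z}$), whereas the paper outsources these to \cite[Propositions 3.23, 5.17 and 5.22]{TraciallyComplete} and deduces the vanishing of the type I part from the McDuff property.
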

\begin{proof}
     Let $A$ be a $\Z$-stable C$^*$-algebra. Set $\M = \completion{A}{T(A)}$ and let $X$ be the set of uniform 2-norm continuous extensions of traces in $T(A)$. 
     By \cite[Proposition 3.23]{TraciallyComplete}, $(\M,X)$ is a factorial tracially complete C$^*$-algebra.
     
     Since $A$ is $\Z$-stable, the tracially complete C$^*$-algebra $(\M,X)$ has the McDuff property by \cite[Proposition 5.17]{TraciallyComplete}, so $(\M,X)$ has property $\Gamma$ by \cite[Proposition 5.22]{TraciallyComplete}. Therefore, by \cite[Theorem 1.4]{TraciallyComplete}, $(\M,X)$ has CPoU. 
     Since $(\M,X)$ has the McDuff property, the type I part of $\pi_\tau(\M)''$ must vanish for every $\tau \in X$. Therefore, $(\M,X)$ is type II$_1$.
     
     As the conditions of Theorem \ref{thm:main-TC} are satisfied,  $T(\M) = X$.
\end{proof}


\begin{thebibliography}{10}

\bibitem{Alf71}
E.~Alfsen.
\newblock {\em Compact convex sets and boundary integrals}.
\newblock Springer-Verlag, New York-Heidelberg, 1971.
\newblock Ergebnisse der Mathematik und ihrer Grenz\-gebiete, Band 57.

\bibitem{RFRT24}
R.~Antoine, F.~Perera, L.~Robert, and H.~Thiel.
\newblock Traces on ultrapowers of {$\rm{C}^*$}-algebras.
\newblock {\em J. Funct. Anal.}, 286(8):Paper No. 110341, 2024.

\bibitem{RFT18}
R.~Antoine, F.~Perera, and H.~Thiel.
\newblock Tensor products and regularity properties of {C}untz semigroups.
\newblock {\em Mem. Amer. Math. Soc.}, 251(1199):viii+191, 2018.

\bibitem{Bl80}
B.~Blackadar.
\newblock Traces on simple {AF} {$\rm{C}^*$}-algebras.
\newblock {\em J. Funct. Anal.}, 38(2):156--168, 1980.

\bibitem{Bl06}
B.~Blackadar.
\newblock {\em Operator algebras: {T}heory of {$\rm{C}^*$}-algebras and von
  {N}eumann algebras}, volume 122.
\newblock Springer, 2006.

\bibitem{BBSTWW}
J.~Bosa, N.~Brown, Y.~Sato, A.~Tikuisis, S.~White, and W.~Winter.
\newblock Covering dimension of {$\rm{C}^*$}-algebras and 2-coloured
  classification.
\newblock {\em Mem. Amer. Math. Soc.}, 257(1233):vii+97, 2019.

\bibitem{CGSTW}
J.~Carri\'on, J.~Gabe, C.~Schafhauser, A.~Tikuisis, and S.~White.
\newblock Classifying {$^*$}-homomorphisms {I}: Unital, simple, nuclear
  {$\rm{C}^*$}-algebras.
\newblock arXiv:2307.06480.

\bibitem{TraciallyComplete}
J.~Carri\'on, J.~Castillejos, S.~Evington, J.~Gabe, C.~Schafhauser, A.~Tikuisis,
  and S.~White.
\newblock Tracially complete {$\rm{C}^*$}-algebras.
\newblock arXiv:2310.20594.

\bibitem{CE}
J.~Castillejos and S.~Evington.
\newblock Nuclear dimension of simple stably projectionless
  {$\rm{C}^*$}-algebras.
\newblock {\em Anal. PDE}, 13(7):2205--2240, 2020.

\bibitem{CETW-classification}
J.~Castillejos, S.~Evington, A.~Tikuisis, and S.~White.
\newblock Classifying maps into uniform tracial sequence algebras.
\newblock {\em M\"unster J. Math.}, 14(2):265--281, 2021.

\bibitem{CETW}
J.~Castillejos, S.~Evington, A.~Tikuisis, and S.~White.
\newblock Uniform property {$\Gamma$}.
\newblock {\em Int. Math. Res. Not. IMRN}, 2022(13):9864--9908, 2022.

\bibitem{CETWW}
J.~Castillejos, S.~Evington, A.~Tikuisis, S.~White, and W.~Winter.
\newblock Nuclear dimension of simple {$\rm{C}^*$}-algebras.
\newblock {\em Invent. Math.}, 224(1):245--290, 2021.

\bibitem{Choquet56}
G.~Choquet.
\newblock Existence des repr\'{e}sentations int\'{e}grales dans les c\^{o}nes
  convexes.
\newblock {\em C. R. Acad. Sci. Paris}, 243:736--737, 1956.

\bibitem{Co76}
A.~Connes.
\newblock Classification of injective factors. {C}ases {II$_{1}$,} {II$_{\infty
  }$,} {III$_{\lambda }$,} {$\lambda \not=1$}.
\newblock {\em Ann. of Math. (2)}, 104(1):73--115, 1976.

\bibitem{CEI08}
K.~Coward, G.~Elliott, and C.~Ivanescu.
\newblock The {C}untz semigroup as an invariant for {$\rm{C}^*$}-algebras.
\newblock {\em J. Reine Angew. Math.}, 623:161--193, 2008.

\bibitem{Cu78}
J.~Cuntz.
\newblock Dimension functions on simple {$\rm{C}^*$}-algebras.
\newblock {\em Math. Ann.}, 233(2):145--153, 1978.

\bibitem{CP79}
J.~Cuntz and G.~Pedersen.
\newblock Equivalence and traces on {$\rm{C}^*$}-algebras.
\newblock {\em J. Funct. Anal.}, 33(2):135--164, 1979.

\bibitem{El76}
G.~Elliott.
\newblock On the classification of inductive limits of sequences of semisimple
  finite-dimensional algebras.
\newblock {\em J. Algebra}, 38(1):29--44, 1976.

\bibitem{El95}
G.~Elliott.
\newblock The classification problem for amenable {$\rm{C}^*$}-algebras.
\newblock In {\em Proceedings of the {I}nternational {C}ongress of
  {M}athematicians, {V}ol.\ 1, 2 ({Z}\"urich, 1994)}, pages 922--932.
  Birkh\"auser, Basel, 1995.

\bibitem{EGLN15}
G.~Elliott, G.~Gong, H.~Lin, and Z.~Niu.
\newblock On the classification of simple amenable {$C^*$}-algebras with finite
  decomposition rank, {II}.
\newblock \emph{J. Noncommut. Geom.}, to appear. arXiv:1507.03437.

\bibitem{ERS11}
G.~Elliott, L.~Robert, and L.~Santiago.
\newblock The cone of lower semicontinuous traces on a {$\rm{C}^*$}-algebra.
\newblock {\em Amer. J. Math.}, 133(4):969--1005, 2011.

\bibitem{ET08}
G.~Elliott and A.~Toms.
\newblock Regularity properties in the classification program for separable
  amenable {$\rm{C}^*$}-algebras.
\newblock {\em Bull. Amer. Math. Soc. (N.S.)}, 45(2):229--245, 2008.

\bibitem{Ev18}
S.~Evington.
\newblock {\em {$\rm{W}^*$}-Bundles}.
\newblock PhD thesis, University of Glasgow, 2018.
\newblock http://theses.gla.ac.uk/8650/.

\bibitem{Ev16}
S.~Evington and U.~Pennig.
\newblock Locally trivial {$\rm{W}^*$}-bundles.
\newblock {\em Int. J. Math}, 27(11), 2016.

\bibitem{FH80}
T.~Fack and P.~de~la Harpe.
\newblock Sommes de commutateurs dans les alg\`ebres de von {N}eumann finies
  continues.
\newblock {\em Ann. Inst. Fourier (Grenoble)}, 30(3):49--73, 1980.

\bibitem{TracialTransfer}
I.~Farah, B.~Hart, I.~Hirshberg, C.~Schafhauser, A.~Tikuisis,
  and A.~Vaccaro.
\newblock The Tracial Transfer Property.
\newblock Manuscript in preparation.

\bibitem{GNS1}
I.~Gelfand and M.~Naimark.
\newblock On the imbedding of normed rings into the ring of operators in
  {H}ilbert space.
\newblock {\em Rec. Math. [Mat. Sbornik] N.S.}, 12/54:197--213, 1943.

\bibitem{GLN20a}
G.~Gong, H.~Lin, and Z.~Niu.
\newblock A classification of finite simple amenable {$\mathcal{Z}$}-stable
  {$C^*$}-algebras, {I}: {$C^*$}-algebras with generalized tracial rank one.
\newblock {\em C. R. Math. Acad. Sci. Soc. R. Can.}, 42(3):63--450, 2020.

\bibitem{GLN20b}
G.~Gong, H.~Lin, and Z.~Niu.
\newblock A classification of finite simple amenable {$\mathcal{Z}$}-stable
  {$C^*$}-algebras, {II}: {$C^*$}-algebras with rational generalized tracial
  rank one.
\newblock {\em C. R. Math. Acad. Sci. Soc. R. Can.}, 42(4):451--539, 2020.

\bibitem{Go77}
K.~Goodearl.
\newblock Algebraic representations of {C}hoquet simplexes.
\newblock {\em J. Pure Appl. Algebra}, 11(1--3):111--130, 1977/78.

\bibitem{JS99}
X.~Jiang and H.~Su.
\newblock On a simple unital projectionless {$\rm{C}^*$}-algebra.
\newblock {\em Amer. J. Math.}, 121(2):359--413, 1999.

\bibitem{Ki95}
E.~Kirchberg.
\newblock Exact {${\rm C}^*$}-algebras, tensor products, and the classification
  of purely infinite algebras.
\newblock In {\em Proceedings of the {I}nternational {C}ongress of
  {M}athematicians, {V}ol.\ 1, 2 ({Z}\"urich, 1994)}, pages 943--954.
  Birkh\"auser, Basel, 1995.

\bibitem{KR14}
E.~Kirchberg and M.~R{\o}rdam.
\newblock Central sequence {$\rm{C}^*$}-algebras and tensorial absorption of
  the {J}iang-{S}u algebra.
\newblock {\em J. Reine Angew. Math.}, 695:175--214, 2014.

\bibitem{MS12}
H.~Matui and Y.~Sato.
\newblock Strict comparison and {$\mathcal{Z}$}-absorption of nuclear
  {$\rm{C}^*$}-algebras.
\newblock {\em Acta Math.}, 209(1):179--196, 2012.

\bibitem{MvN36}
F.~Murray and J.~von~Neumann.
\newblock On rings of operators.
\newblock {\em Ann. of Math. (2)}, 37(1):116--229, 1936.

\bibitem{MvN37}
F.~Murray and J.~von Neumann.
\newblock On rings of operators. {II}.
\newblock {\em Trans. Amer. Math. Soc.}, 41(2):208--248, 1937.

\bibitem{MvN43}
F.~Murray and J.~von Neumann.
\newblock On rings of operators. {IV}.
\newblock {\em Ann. of Math. (2)}, 44:716--808, 1943.

\bibitem{NgR16}
P.~Ng and L.~Robert.
\newblock Sums of commutators in pure {$\rm{C}^*$}-algebras.
\newblock {\em M\"{u}nster J. Math.}, 9(1):121--154, 2016.

\bibitem{Oz13}
N.~Ozawa.
\newblock Dixmier approximation and symmetric amenability for {$\rm
  C^*$}-algebras.
\newblock {\em J. Math. Sci. Univ. Tokyo}, 20(3):349--374, 2013.

\bibitem{Phillips00}
N.~C. Phillips.
\newblock A classification theorem for nuclear purely infinite simple
  {$\rm{C}^*$}-algebras.
\newblock {\em Doc. Math.}, 5:49--114, 2000.

\bibitem{Ro92}
M.~R{\o}rdam.
\newblock On the structure of simple {$\rm{C}^*$}-algebras tensored with a
  {UHF}-algebra. {II}.
\newblock {\em J. Funct. Anal.}, 107(2):255--269, 1992.

\bibitem{Sak98}
S.~Sakai.
\newblock {\em {$\rm{C}^*$}-algebras and {$\rm{W}^*$}-algebras}.
\newblock Classics in Mathematics. Springer-Verlag, Berlin, 1998.
\newblock Reprint of the 1971 edition.

\bibitem{Sa12}
Y.~Sato.
\newblock Trace spaces of simple nuclear {$\rm{C}^*$}-algebras with
  finite-dimensional extreme boundary.
\newblock arXiv:1209.3000.

\bibitem{GNS2}
I.~Segal.
\newblock Irreducible representations of operator algebras.
\newblock {\em Bull. Amer. Math. Soc.}, 53:73--88, 1947.

\bibitem{Tak03}
M.~Takesaki.
\newblock {\em Theory of operator algebras. {\rm{III}}}, volume 127 of {\em
  Encyclopaedia of Mathematical Sciences}.
\newblock Springer-Verlag, Berlin, 2003.

\bibitem{Thiel20}
H.~Thiel.
\newblock Ranks of operators in simple {$\rm{C}^*$}-algebras with stable rank one
\newblock {\em Commun. Math. Phys.}, 377:37--76, 2020.

\bibitem{TWW17}
A.~Tikuisis, S.~White, and W.~Winter.
\newblock Quasidiagonality of nuclear {$\rm{C}^*$}-algebras.
\newblock {\em Ann. of Math. (2)}, 185(1):229--284, 2017.

\bibitem{TWW15}
A.~Toms, S.~White, and W.~Winter.
\newblock {$\mathcal{Z}$}-stability and finite-dimensional tracial boundaries.
\newblock {\em Int. Math. Res. Not. IMRN}, 2015(10):2702--2727, 2015.

\bibitem{TW07}
A.~Toms and W.~Winter.
\newblock Strongly self-absorbing {$\rm{C}^*$}-algebras.
\newblock {\em Trans. Amer. Math. Soc.}, 359(8):3999--4029, 2007.

\bibitem{WhiteICM}
S.~White.
\newblock Abstract classification theorems for amenable {$\rm{C}^*$}-algebras.
\newblock In {\em I{CM}---{I}nternational {C}ongress of {M}athematicians.
  {V}ol. 4. {S}ections 5--8}, pages 3314--3338. EMS Press, Berlin, 2023.

\bibitem{WinterICM}
W.~Winter.
\newblock Structure of nuclear {$\rm{C}^*$}-algebras: from quasidiagonality to
  classification and back again.
\newblock In {\em Proceedings of the {I}nternational {C}ongress of
  {M}athematicians---{R}io de {J}aneiro 2018. {V}ol. {III}. {I}nvited
  lectures}, pages 1801--1823. World Sci. Publ., Hackensack, NJ, 2018.

\end{thebibliography}
\end{document}